\documentclass{amsart}

\usepackage{mathtools}
\usepackage{xcolor}
\usepackage{tikz}
\usepackage{float}
\usepackage{enumerate}
\usepackage[hidelinks]{hyperref}
\usetikzlibrary{arrows.meta,calc,decorations.pathreplacing}

\providecolor{PurdueGold}{HTML}{CEB888}
\providecolor{PurdueBlack}{HTML}{111111}
\providecolor{DeepBlue}{HTML}{17324D}
\providecolor{AccentRed}{HTML}{8A1538}

\newcommand{\C}{\mathbb C}
\newcommand{\R}{\mathbb R}
\newcommand{\N}{\mathbb N}

\newcommand{\U}{\mathcal U}
\newcommand{\tr}{\operatorname{tr}}
\newcommand{\rank}{\operatorname{rank}}

\newcommand{\supp}{\operatorname{supp}}

\newcommand{\rr}{\operatorname{rr}}

\theoremstyle{plain}
\newtheorem{thm}{Theorem}[section]
\newtheorem{prop}[thm]{Proposition}
\newtheorem{lem}[thm]{Lemma}
\newtheorem{cor}[thm]{Corollary}

\theoremstyle{definition}
\newtheorem{defn}[thm]{Definition}
\newtheorem{notation}[thm]{Notation}

\newtheorem{rmk}[thm]{Remark}

\title{Subquadratic growth and uniform property \(\Gamma\)}

\author{Ethan Kessinger}
\address{Department of Mathematics\\
Purdue University\\
150 North University Street\\
West Lafayette, IN 47907, USA}
\email{ekessin@purdue.edu}

\author{Andrew S. Toms}
\address{Department of Mathematics\\
Purdue University\\
150 North University Street\\
West Lafayette, IN 47907, USA}
\email{atoms@purdue.edu}

\date{}

\hypersetup{
  pdftitle={Subquadratic growth and uniform property Gamma},
  pdfauthor={Ethan Kessinger and Andrew S. Toms}
}

\begin{document}

\begin{abstract}
We prove that every unital separable ASH algebra with subquadratic growth has uniform property $\Gamma$ whenever it has no nonzero finite-dimensional representations.  When simple and non-elementary, these algebras therefore satisfy the Toms--Winter regularity conjecture despite the fact that they generally fail its three conjecturally equivalent properties.  In light of the second author's recent construction of a unital simple separable AH algebra of quadratic growth which fails uniform property $\Gamma$, we conclude that the quadratic dimension growth scale (equivalently, the 2-norm slow dimension growth scale) is the precise geometric threshold governing the potential failure of uniform property $\Gamma$.  We also extend recent work of Elliott--Niu and Vaccaro to the optimal
subquadratic scale by proving that separable unital
\(C^*\)-algebras with locally tracially subquadratic
RSH approximation have uniform property \(\Gamma\), provided that they have no nonzero finite-dimensional representations.
\end{abstract}

\maketitle

\section{Introduction}

Uniform property $\Gamma$ is a central tracial divisibility property for C$^*$-algebras modeled on property $\Gamma$ from the theory of $\mathrm{II}_1$ factors \cite{CETWW:IM}, \cite{MurrayVonNeumann4}.  It is deeply entwined with the Toms--Winter regularity conjecture for simple separable nuclear C$^*$-algebras---the conjecture simply holds in its presence \cite[Theorem~5.6]{CETW:IMRN}. The second author's recent construction of a simple unital separable AH algebra without uniform property $\Gamma$ shows that the presence of uniform property $\Gamma$ in simple separable nuclear C$^*$-algebras is not automatic and that its failure appears to depend on the presence of quadratic dimension growth \cite[Theorem~6]{TomsSchubCalc:preprint}.  Here we prove the natural counterpoint to that construction, namely, that subquadratic growth implies uniform property $\Gamma$.  This solidifies quadratic dimension growth or, more generally, the presence or absence of uniform property $\Gamma$, as a new and fundamental geometric threshold in the theory of nuclear C$^*$-algebras.

For an accessible introduction to dimension growth we restrict our initial attention to AH algebras.  We will extend these notions to ASH algebras in the sequel.  Let $A = \varinjlim (A_i,\phi_i)$ be a unital AH decomposition, so that
\[
A_i = \bigoplus_{j=1}^{k_i} \ p_{i,j}(C(X_{i,j}) \otimes \mathcal{K})p_{i,j},
\]
where $X_{i,j}$ is a compact Hausdorff space and $p_{i,j} \in C(X_{i,j}) \otimes \mathcal{K}$ is a constant-rank projection.  Set
\[
R_i = \max_j \left\{ \frac{\dim(X_{i,j})}{\rank(p_{i,j})^2} \right\}.
\]
We say the decomposition $(A_i,\phi_i)$ has \emph{subquadratic growth} if $\liminf R_i = 0$, and that $A$ has subquadratic growth if it admits such a decomposition.

\begin{figure}[H]
\centering
\begin{tikzpicture}[scale=0.9, every node/.style={font=\small}]
  \def\h{0.18}
  \coordinate (L) at (0.45,0);
  \coordinate (A) at (2.15,0);
  \coordinate (B) at (4.20,0);
  \coordinate (C) at (6.95,0);
  \coordinate (D) at (8.85,0);
  \coordinate (R) at (11.85,0);

  \draw[thick, {Latex[length=3mm]}-{Latex[length=3mm]}] (0.0,0) -- (12.25,0);

  \fill[DeepBlue!25] ($(L)+(0,-\h)$) rectangle ($(A)+(0,\h)$);
  \fill[DeepBlue!50] ($(A)+(0,-\h)$) rectangle ($(B)+(0,\h)$);
  \fill[PurdueGold!45] ($(B)+(0,-\h)$) rectangle ($(C)+(0,\h)$);
  \fill[PurdueGold!80!black!12] ($(C)+(0,-\h)$) rectangle ($(D)+(0,\h)$);
  \fill[AccentRed!18] ($(D)+(0,-\h)$) rectangle ($(R)+(0,\h)$);

  \draw[thick, PurdueBlack!70] ($(L)+(0,-\h)$) rectangle ($(R)+(0,\h)$);
  \draw[PurdueBlack!70] ($(A)+(0,-\h)$) -- ($(A)+(0,\h)$);
  \draw[PurdueBlack!70] ($(B)+(0,-\h)$) -- ($(B)+(0,\h)$);
  \draw[PurdueBlack!70] ($(C)+(0,-\h)$) -- ($(C)+(0,\h)$);
  \draw[PurdueBlack!70] ($(D)+(0,-\h)$) -- ($(D)+(0,\h)$);

  \node[above=5pt] at ($(L)!0.5!(A)+(0,\h)$) {slow};
  \node[above=5pt] at ($(A)!0.5!(B)+(0,\h)$) {linear};
  \node[above=5pt] at ($(B)!0.5!(C)+(0,\h)$) {superlinear};
  \node[above=5pt] at ($(C)!0.5!(D)+(0,\h)$) {quadratic};
  \node[above=5pt] at ($(D)!0.5!(R)+(0,\h)$) {superquadratic};

  \draw[decorate, decoration={brace, mirror, amplitude=6pt}]
    ($(L)+(0,-\h)$) -- ($(C)+(0,-\h)$)
    node[midway, below=10pt, align=center] {subquadratic/2-norm slow growth \\ uniform $\Gamma$ holds};

  \draw[decorate, decoration={brace, mirror, amplitude=6pt}]
    ($(C)+(0,-\h)$) -- ($(R)+(0,-\h)$)
    node[midway, below=10pt, align=center, text width=3.8cm]
    {quadratic or faster/ \\2-norm linear or faster \\ uniform $\Gamma$ may fail};
\end{tikzpicture}
\caption{A coarse taxonomy of geometric dimension-growth regimes.}
\label{fig:geometric-thresholds}
\end{figure}

The sequel is devoted essentially to proving the statement under left-hand brace in Figure~\ref{fig:geometric-thresholds} (borrowed here from \cite{TomsSchubCalc:preprint}), not only for AH algebras but also for the larger class of ASH algebras without finite-dimensional representations.
The mechanics of the subquadratic/quadratic boundary are already visible in the geometry of self-adjoint matrices.  For fixed \(\lambda\in\mathbb R\),
the matrices in \(M_r(\mathbb C)_{\mathrm{sa}}\) for which \(\lambda\)
has multiplicity at least \(k\) form a closed stratified subset whose
largest stratum has real codimension \(k^2\).  A self-adjoint section
over a base space of dimension \(d<k^2\) can therefore be perturbed so
that \(\lambda\) never occurs with multiplicity \(k\) or greater.
Taking \(k\) just larger than \(\sqrt d\), the resulting normalized
spectral defect is of order \(\sqrt d/r\), and hence tends to zero
exactly when \(d/r^2\) tends to zero.  The quadratic threshold in the
figure is thus imposed by the eigenvalue-multiplicity geometry of the
matrix fibers.

\begin{thm}\label{AHgamma}
    Let $A$ be a unital simple non-elementary AH algebra with subquadratic growth.  It follows that $A$ has uniform property $\Gamma$.
\end{thm}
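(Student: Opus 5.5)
We reduce uniform property $\Gamma$ to a local, finite-stage statement and then use the eigenvalue-multiplicity geometry described before the statement. Recall that uniform property $\Gamma$ for a unital separable $A$ with $T(A)$ nonempty is equivalent, via the standard reformulation of \cite{CETWW:IM}, to the following. For every finite set $F\subset A$, every $\varepsilon>0$ and every $n\in\N$, there are pairwise orthogonal positive contractions $e_1,\dots,e_n\in A$ such that $\|[e_l,a]\|<\varepsilon$ for $a\in F$, and $|\tau(e_la)-\tfrac1n\tau(a)|<\varepsilon$ for all $\tau\in T(A)$ and $a\in F$. Simplicity and non-elementarity give $\rank(p_{i,j})\to\infty$ along the decomposition; otherwise $A$ would have a finite-dimensional quotient. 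Since $F$ can be approximated from the images of the $A_i$, and since subquadratic growth gives stages with $R_i$ small, it suffices to do the following at a stage $i$ where $R_i$ is tiny. Produce, in each summand $p(C(X)\otimes\mathcal K)p$ with $d=\dim X$ and $r=\rank p$, a partition of unity $e_1,\dots,e_n$ that almost commutes with $\phi_{i,m}(F)$ in the image at a later stage $m$ and is almost equidistributed in trace at every point. Pointwise (fiberwise) tracial control suffices, because every trace on the limit factors through a state on $C(X)$ integrated against the normalized fiber traces.

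The construction is as follows. Fix $a\in F$. After passing to a later stage, we may assume that $\phi_{i,m}(a)$ is approximately given fiberwise by self-adjoint matrices. Since $A$ is simple, the connecting maps can be arranged to be \emph{diagonal-like modulo small error}. Concretely, we use the standard perturbation of the connecting maps to add a large number of point-evaluation summands, which is possible in simple AH algebras. The genuinely new input is the multiplicity argument. Let $h\in (p(C(X)\otimes\mathcal K)p)_{sa}$ be a suitable generator built from $F$, for instance a self-adjoint element whose functional calculus approximately generates the relevant commutant. Set $k=\lceil\sqrt{d}\,\rceil+1$, so that $k^2>d$. By transversality to the stratified set of matrices with an eigenvalue of multiplicity $\ge k$, which has codimension $k^2$, we perturb $h$ slightly so that no eigenvalue ever has multiplicity $\ge k$ at any point of $X$. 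Then for any threshold $\lambda$, the spectral projection $\chi_{(-\infty,\lambda]}(h(x))$ varies continuously except across a defect of rank $<k$. Cutting the spectrum of $h(x)$ into $n$ consecutive blocks of $\approx r/n$ eigenvalues, and smoothing the cut functions over windows of width $\delta$, produces continuous positive functions $f_l(h)$ that commute with $h$. Their sum is $1$, and their fiber traces equal $\tfrac1n$ up to an error of order $k/r\approx\sqrt{d}/r=\sqrt{R_i}$. Orthogonality is recovered by a standard adjustment using functional calculus on disjoint subintervals, at the cost of another $O(k/r)$ in trace.

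The main obstacle is making the cut points $\lambda_l(x)$ depend continuously on $x$ while keeping both the trace error and the commutator error small. The eigenvalue-count function is only lower semicontinuous, and it jumps by the multiplicity. The multiplicity bound $<k$ controls exactly those jumps, so we choose $\lambda_l(x)$ by a partition-of-unity argument over a finite cover of $X$ (dimension $d$) and average. Almost-commutation with all of $F$ rather than a single $h$ is the second difficulty. We would handle it by first perturbing $F$, via the dimension-drop/fiberwise argument, into a set almost commuting with one self-adjoint $h$ whose spectral blocks are multiplicity controlled. If that fails, the fallback is to establish only the tracial-divisibility bound above. That bound gives complemented partitions of unity in the sense of \cite{CETW:IMRN}, and the pointwise tracial estimate $O(\sqrt{R_i})\to 0$ then yields uniform property $\Gamma$ by that equivalence.
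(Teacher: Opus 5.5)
There is a genuine gap, and it sits where the difficulty of the theorem lies: approximate centrality. Uniform property~$\Gamma$ asks for orthogonal $e_1,\dots,e_n$ with $\|[e_l,a]\|$ small \emph{and} $\tau(e_la)\approx\frac1n\tau(a)$ for every $a\in F$ and $\tau\in T(A)$, not merely $\tau(e_l)\approx\frac1n$. Your elements $f_l(h)$ commute with whatever $h$ commutes with, so the entire burden falls on $h$, and you never produce a suitable $h$.

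If $h$ is built from $F$ (say $h=a\geq0$), the trace condition fails outright. In that case $\tau(f_n(a)a)$ is governed by the top $r/n$ eigenvalues of $a$, and it exceeds $\frac1n\tau(a)$ by a definite amount unless $a$ is nearly fiberwise scalar. If instead $h$ is to lie in the approximate commutant of $\phi_{i,m}(A_i)$ in $A_m$, note that fiberwise this commutant is $\bigoplus_jM_{m_j(x)}$. Its multiplicity pattern is dictated by the connecting maps and jumps as $x$ varies. A continuous section of it whose spectral blocks are tracially independent of $F$ is essentially the content of uniform property~$\Gamma$ itself; such elements fail to exist in the quadratic-growth example of \cite{TomsSchubCalc:preprint}. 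Eigenvalue-multiplicity control of a single section does not provide one.

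The two devices you invoke do not close this. Simplicity does not let you perturb the connecting maps into diagonal-like maps dominated by point evaluations. It controls which points the maps reach, not what fraction of the multiplicity factors through point evaluations, and in Villadsen-type systems (flat, hence subquadratic, growth) that fraction is small. The fallback is also invalid: complemented partitions of unity are partitions of unity by projections in $A^{\U}\cap\iota_A(A)'$. A tracial divisibility bound for non-central elements therefore yields neither them nor uniform property~$\Gamma$.

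The paper avoids constructing central elements altogether. Your preliminary reductions (ranks tending to infinity, the fiberwise integral formula for traces) match Proposition~\ref{prop:AH-subquadratic-to-strict} and Corollary~\ref{cor:compact-tracial-rank-avoidance}. After that, rank avoidance is used only for a non-central statement. Take an \emph{arbitrary} sequence of self-adjoint contractions $a_n$ and a \emph{fixed} finite grid $\Lambda\subset(-1,1)$. The results of Section~\ref{sec:rank-avoidance} (Jacob's bundle-valued avoidance, one exact-nullity stratum at a time) perturb building-block approximants $b_n$ so that $\dim\ker(b_n(x)-\lambda)<k$, with $k^2>\dim X$ and $k/r<\theta_n$. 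Hence $\sup_{\tau}\tau(\eta_{\delta_n}(b_n-\lambda))<\theta_n\to0$. This is property~$(FG)$, which gives real rank zero of $A^{\U}$ by Proposition~\ref{prop:S-criterion}. Because the cut points are fixed, your continuity problem for $\lambda_l(x)$ never arises.

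Centrality then comes from Theorem~\ref{thm:VW-bridge}. Real rank zero of $A^{\U}$ and the absence of finite-dimensional representations yield tracial almost divisibility of every positive element of every $M_m(A)$, not just of the unit. Winter's centralization argument (Lemma~\ref{lem:winter-centralization}), which uses finite nuclear dimension of the homogeneous local models, converts this into uniform McDuffness and hence uniform property~$\Gamma$. Your fallback gestures in this direction, but the passage from divisibility to central elements is that theorem, not an equivalence.
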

\noindent
This theorem is optimal in light of the second author's construction of a unital simple AH algebra with quadratic dimension growth that fails to have uniform property $\Gamma$, and yields a clean picture: quadratic dimension growth is the precise threshold at which uniform property $\Gamma$ begins to fail.  

We defer the full definition of subquadratic growth for ASH algebras until Definition~\ref{def:rsh-subquadratic-growth}.  For now we note only that by a result of Ng and Winter (\cite[Corollary~2.1]{NgWinter}), every unital simple separable ASH algebra is an inductive limit of recursive subhomogeneous algebras in the sense of Phillips (\cite{PhillipsRSH}).  The RSH formulation of our main theorem is the following.

\begin{thm}
\label{ASHgamma}
Let \(A\) be a separable, unital \(C^*\)-algebra with no nonzero
finite-dimensional representations.  Suppose that \(A\) admits a
unital RSH inductive-limit decomposition with subquadratic RSH
dimension growth in the sense of
Definition~\ref{def:rsh-subquadratic-growth}.  Then \(A\) has uniform
property \(\Gamma\).
\end{thm}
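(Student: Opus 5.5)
Throughout, I will assume that Definition~\ref{def:rsh-subquadratic-growth} says the following: along a subsequence of building blocks, every point of every base space of dimension $d$ lies in a fibre of matrix size $r$ with $d/r^2\to 0$, uniformly over the recursive decomposition.

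I would prove Theorem~\ref{ASHgamma} through the standard reformulation of uniform property $\Gamma$ in terms of tracially large, approximately central, approximately orthogonal positive contractions. Concretely, it suffices to show the following. For every finite set $F\subset A$, every $\varepsilon>0$ and every $n\in\N$, there are pairwise orthogonal positive contractions $e_1,\dots,e_n\in A$ such that:
\begin{itemize}
\item $\|[e_k,a]\|<\varepsilon$ for $a\in F$;
\item $|\tau(e_k a)-\tfrac1n\tau(a)|<\varepsilon$ for all $\tau\in T(A)$ and $a\in F$.
\end{itemize}
By a standard approximation, we may assume $F$ lies in the image of a building block $A_i$. We then pass to a later stage $A_m$ in the subsequence where $R_m$ is small. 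Because $A$ has no nonzero finite-dimensional representations, the minimal matrix size $r$ occurring in $A_m$ tends to infinity. This is used twice: to keep the normalized defects small, and to guarantee that tracial weight is not concentrated on any small-rank piece.

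The core step is a fibrewise spectral partition of a single self-adjoint element. Take a suitable self-adjoint generator-like element $h$ of $\phi_{i,m}(C^*(F))$, or a finite family of them. Over each base space $X$ of $A_m$ of dimension $d$ and matrix size $r$, I would use the codimension-$k^2$ transversality described in the introduction. Choosing $k\approx\sqrt d+1$, I would perturb $h$, by a small amount and compatibly with the recursive gluing maps, so that over every point the chosen cut levels $\lambda_1<\dots<\lambda_N$ occur with multiplicity less than $k$. Continuous functional calculus then gives the following. Apply bump functions $g_j$, supported between consecutive cut levels, to the perturbed element. These are nearly commuting with $F$, since they are functions of an element that is approximately in the centre of the relevant matrix algebra only after a further averaging step; see below. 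They also cut each fibre into spectral blocks whose ranks are controlled up to a defect of order $k/r\approx\sqrt d/r$. Since $d/r^2\to0$, this defect is tracially negligible.

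The real issue is approximate \emph{centrality}. Functions of $h$ commute with $h$, not with all of $F$. So, following the approach of Elliott--Niu and Vaccaro for the linear and 2-norm slow cases, I would instead work with the connecting map. The image $\phi_{i,m}(a)$ is, fibrewise, approximately a direct sum of point evaluations of $A_i$, with many repeated eigenvalue patterns. The orthogonal elements $e_k$ are built by grouping the eigen-point evaluations into $n$ classes of nearly equal size. This is done continuously over $X$, which is exactly where the transversality argument enters: it lets us avoid a degeneracy stratum of codimension about $k^2>d$. Each $e_k$ is then a compression onto a sub-sum of evaluation blocks, and so it commutes approximately with $\phi_{i,m}(F)$.

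The trace estimate follows because every extreme trace of $A_m$ is a point evaluation. Each class carries roughly a $1/n$ share of every evaluation pattern, with error $O(\sqrt d/r)+O(\varepsilon)$.

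The main obstacle is making this grouping continuous and compatible across the recursive pullback structure while controlling multiplicity. I would handle it by induction on the RSH length, extending from the boundary of each new base space using relative transversality, which is available because the defect stratum has codimension exceeding $d$. Finally, I would push the $e_k$ forward into $A$.
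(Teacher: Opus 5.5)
\textbf{There is a genuine gap at the step you yourself flag as the real issue: approximate centrality.}

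You propose to write $\mathrm{ev}_x\circ\phi_{i,m}$ as a sum of point evaluations of $A_i$. You then group these summands, continuously in $x$, into $n$ classes, and say transversality ``avoids a degeneracy stratum of codimension about $k^2$.'' But you never identify that stratum. Lemma~\ref{lem:rank-defect-codim} does not supply one: it bounds the multiplicity of a fixed level $\lambda$ as an eigenvalue of a single self-adjoint section. Your grouping needs something different, namely continuous subbundles that are approximately invariant under $\phi_{i,m}(F)$.

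That need is not covered by any codimension count.
\begin{itemize}
\item The decomposition of $\mathrm{ev}_x\circ\phi_{i,m}$ into irreducibles exists only up to unitary equivalence.
\item As $x$ varies the evaluation points collide, and the associated isotypic subbundles are in general topologically twisted.
\item Where the points are distinct, the relative commutant of $\mathrm{ev}_x\phi_{i,m}(A_i)$ is abelian, so there is nothing to split exactly.
\item Classes of nearly equal \emph{size} do not give $\tau(e_k a)\approx\frac1n\tau(a)$. Each class must carry about $1/n$ of the empirical distribution of evaluation points, uniformly and continuously in $x$, and compatibly with the clutching maps on every $Y_j$.
\end{itemize}
This is exactly where a proof must use the subquadratic hypothesis essentially, because the quadratic-growth counterexample shows uniform $\Gamma$, and hence any such splitting, can fail. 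The step cannot be handed to an unspecified stratum.

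\textbf{How the paper avoids this.} The paper never builds central elements geometrically. It applies rank avoidance (Theorem~\ref{thm:rsh-rank-avoidance}, Corollary~\ref{cor:rsh-tracial-smallness}) to arbitrary self-adjoint contractions at finitely many cut levels, with no commutation requirement. This is essentially your discarded bump-function idea. It gives property $(FG)$ and hence real rank zero of $A^{\U}$ (Proposition~\ref{prop:S-criterion}, Theorem~\ref{thm:local-rsh-rr0}).

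Centrality is then supplied by Theorem~\ref{thm:VW-bridge}:
\begin{itemize}
\item The absence of finite-dimensional representations is used there, not to force $r\to\infty$, but to give small full projections in $M_m(A^{\U})$. Combined with real rank zero, these yield tracial almost divisibility.
\item RSH subalgebras have finite nuclear dimension (Lemma~\ref{lem:local-rsh-tlfnd}).
\item Winter's centralization argument (Lemma~\ref{lem:winter-centralization}) then produces approximately central order zero maps $M_n\to A$ of almost full trace. This is uniform McDuffness, which gives uniform $\Gamma$.
\end{itemize}
To repair your proposal, you would need either a new continuous, equidistributed splitting theorem below quadratic scale, or to adopt this route. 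In the paper's route the subquadratic geometry is needed only for the non-central real rank zero statement.
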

\noindent
Note that for a simple unital \(C^*\)-algebra, the absence of nonzero
finite-dimensional representations is equivalent to
non-elementarity.  

In their foundational work on uniform property $\Gamma$, Castillejos et al.\ prove that the Toms--Winter conjecture holds among simple, unital, separable, nuclear, non-elementary C$^*$-algebras with uniform property $\Gamma$ (Theorem~5.6 of \cite{CETW:IMRN}). The following corollary of Theorem~\ref{ASHgamma} is then immediate:
\begin{cor}\label{TWASH}
The class of simple unital non-elementary inductive limits of recursive subhomogeneous C$^*$-algebras with subquadratic growth satisfies the Toms--Winter conjecture.
\end{cor}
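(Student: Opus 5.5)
The corollary follows by chaining Theorem~\ref{ASHgamma} with \cite[Theorem~5.6]{CETW:IMRN}. Let \(A\) be a simple, unital, non-elementary C\(^*\)-algebra that is an inductive limit of recursive subhomogeneous algebras with subquadratic growth. We interpret this as saying that \(A\) admits a unital RSH inductive-limit decomposition with subquadratic RSH dimension growth in the sense of Definition~\ref{def:rsh-subquadratic-growth}. The Toms--Winter conjecture concerns simple, separable, unital, nuclear, non-elementary algebras, so we first check that \(A\) lies in this class.

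\emph{Step 1: separability and nuclearity.} Each RSH algebra in the decomposition is separable, being built by finitely many pullbacks of algebras \(C(X)\otimes M_n\) with \(X\) compact metrizable. It is also type I and hence nuclear. A sequential inductive limit of separable nuclear C\(^*\)-algebras is again separable and nuclear, so \(A\) is separable and nuclear.

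\emph{Step 2: verifying the hypotheses of Theorem~\ref{ASHgamma}.} We need that \(A\) has no nonzero finite-dimensional representations. As remarked after Theorem~\ref{ASHgamma}, for simple unital algebras this is equivalent to non-elementarity. Concretely, a nonzero finite-dimensional representation \(\pi\) of a simple unital \(A\) is injective, so \(A\) embeds in a matrix algebra. Then \(A\) is finite-dimensional and simple, so \(A\cong M_n\), which is elementary. Theorem~\ref{ASHgamma} therefore applies, and \(A\) has uniform property \(\Gamma\).

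\emph{Step 3: invoking \cite{CETW:IMRN}.} By \cite[Theorem~5.6]{CETW:IMRN}, a simple, separable, unital, nuclear, non-elementary C\(^*\)-algebra with uniform property \(\Gamma\) satisfies the Toms--Winter conjecture. That is, finite nuclear dimension, \(\mathcal Z\)-stability and strict comparison are equivalent for \(A\), and this proves the corollary.

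The only step needing any care is pinning down the hypothesis. If the statement's phrase "with subquadratic growth" were read as a property of an arbitrary ASH decomposition, one would first pass to an RSH decomposition via \cite[Corollary~2.1]{NgWinter}. One would then have to check that subquadratic growth survives this passage. To avoid this, we read the hypothesis directly as the existence of an RSH decomposition with subquadratic growth, which is the class stated in the corollary. With that reading, the proof is purely formal.
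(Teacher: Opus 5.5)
Your proof is correct and follows the paper's route: Theorem~\ref{ASHgamma} gives uniform property \(\Gamma\), and \cite[Theorem~5.6]{CETW:IMRN} then gives the Toms--Winter conjecture. The paper calls the corollary immediate and leaves implicit the checks you spell out, namely separability, nuclearity, and that non-elementarity rules out nonzero finite-dimensional representations; your reading of ``subquadratic growth'' as the existence of an RSH decomposition in the sense of Definition~\ref{def:rsh-subquadratic-growth} matches the paper's own definition for ASH algebras.
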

\noindent
Theorem~\ref{ASHgamma} and Corollary~\ref{TWASH} in fact hold for the formally larger class of C$^*$-algebras that admit \emph{unital locally
tracially subquadratic RSH approximation} (see Definition~\ref{def:unital-local-rsh-subquadratic}).  It is also important to note that subquadratic ASH algebras as in Corollary~\ref{TWASH} regularly fail to satisfy the conjecturally equivalent conditions of the Toms--Winter conjecture---finite nuclear dimension, $\mathcal{Z}$-stability, and strict comparison.  In the subclass of subquadratic AH algebras there are a multitude of examples---see for instance \cite{Villadsen:JFA}, \cite{Villadsen:JAMS}, \cite{Toms:Ann}, \cite{Toms:JFA}---but the second author has also shown that these pathologies can persist in projectionless ASH algebras which, for K-theoretic reasons, are not AH \cite{Toms2005Independence}.

There is a philosophical consequence of Corollary~\ref{TWASH} worth noting as well.  Among unital simple separable nuclear C$^*$-algebras with a trace, we have no examples which are not ASH\@.  It is possible that none exist, although this is a deep and difficult problem.  Moreover, many naturally occurring C$^*$-algebras arising from groups and their dynamics are known, despite outward appearances, to be ASH through confirmations of Elliott's original classification conjecture for simple separable nuclear C$^*$-algebras.  In any case, confirming the Toms--Winter conjecture for ASH algebras would be a landmark result, and Corollary~\ref{TWASH} provides strong evidence that it will hold.  Indeed, any ASH counterexample to the conjecture would have to consist of an algebra that simultaneously enjoys strict comparison and quadratic or faster dimension growth, a pairing that all evidence suggests is unlikely in the extreme.  

We also prove a tracial local version of Theorem~\ref{AHgamma}.  In Definition~\ref{def:vaccaro-local-subquadratic} below we establish what it means for a unital C$^*$-algebra with a trace to have \emph{locally tracially subquadratic homogeneous approximation}, building on earlier definitions of Elliott--Niu and Vaccaro (\cite{ElliottNiu25}, \cite{Vaccaro:preprint}).  This notion permits approximating homogeneous subalgebras which are not necessarily unital, a vital piece of flexibility in applications to crossed products \cite[Lemma~4.3]{ElliottNiu25}.  
\begin{thm}\label{localgamma}
Let \(A\) be a simple, separable, unital, non-elementary
\(C^*\)-algebra with \(T(A)\neq\emptyset\).  If \(A\) has locally
tracially subquadratic homogeneous approximation, then \(A\) has
uniform property \(\Gamma\).
\end{thm}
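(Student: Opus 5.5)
Since the full Definition of locally tracially subquadratic homogeneous approximation comes later, I work from its expected shape, modeled on Elliott--Niu and Vaccaro. For every finite set \(F\subset A\), \(\varepsilon>0\) and \(\delta>0\), there is a (possibly non-unital) homogeneous subalgebra \(B=\bigoplus_j p_j(C(X_j)\otimes\mathcal K)p_j\subseteq A\) with unit \(p\), such that:
\begin{itemize}
\item \(\|[a,p]\|<\varepsilon\) and \(pap\in_\varepsilon B\) for \(a\in F\);
\item \(\tau(1-p)<\delta\) uniformly in \(\tau\in T(A)\), or \(1-p\) is dominated in the Cuntz sense by a small element;
\item \(\max_j \dim X_j/\rank(p_j)^2<\delta\).
\end{itemize}

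\textbf{Reduction.} I use the characterization of uniform property \(\Gamma\) from \cite{CETWW:IM}. For every \(n\), finite \(F\) and \(\varepsilon>0\), it suffices to produce pairwise orthogonal positive contractions \(e_1,\dots,e_n\) with \(\|[e_i,a]\|<\varepsilon\) on \(F\), and \(|\tau(e_ia)-\tau(a)/n|<\varepsilon\) for all \(a\in F\) and \(\tau\in T(A)\). Equivalently, I can produce a single approximately central projection-like partition of unity with uniformly equidistributed traces. A standard \(2\)-norm perturbation argument lets me replace "exactly orthogonal" by "approximately orthogonal".

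\textbf{Step 1: local partitions in the subalgebra.} Given \(F,\varepsilon\), pick \(B\) and \(p\) as above with \(\delta\ll\varepsilon/n\). Work inside the corner \(pAp\supseteq B\approx pFp\). In each homogeneous summand \(p_j(C(X_j)\otimes\mathcal K)p_j\) of rank \(r_j\) and dimension \(d_j<\delta r_j^2\), I need orthogonal positive contractions \(e^{(j)}_1,\dots,e^{(j)}_n\). These should approximately commute with the finitely many elements \(pap\) (up to an error in the \(C(X_j)\otimes M_{r_j}\) sup norm), and each should have normalized trace close to \(1/n\) against every point evaluation, uniformly in the weights \(a\).

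This is the core local lemma, presumably already established earlier for Theorem~\ref{AHgamma}/Theorem~\ref{ASHgamma}. I would invoke that homogeneous-building-block lemma directly. Its mechanism is the following.
\begin{itemize}
\item Approximate the finitely many elements by self-adjoint sections with eigenvalue multiplicities \(<k\approx\sqrt{d_j}\), using the codimension-\(k^2\) transversality.
\item Cut along spectral gaps using continuous functional calculus to obtain approximately central spectral subprojections.
\item Group them into \(n\) pieces of trace \(1/n\pm k/r_j\). Here \(k/r_j\lesssim\sqrt\delta\).
\end{itemize}
Absence of finite-dimensional quotients (from simplicity and non-elementarity) guarantees \(r_j\to\infty\) is not obstructed. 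In fact the ratio condition already forces large rank whenever \(d_j>0\). Zero-dimensional summands of small rank have to be handled by the Dixmier-type argument of the paper's RSH proof, using simplicity to ensure that \(A\) can be chosen so that small-rank blocks carry negligible trace.

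\textbf{Step 2: globalize.} Set \(e_i=\sum_j e^{(j)}_i\in pAp\). Commutators with \(a\in F\) are controlled via \(a\approx pap+(1-p)a(1-p)\), together with \(\|[a,p]\|<\varepsilon\). For any \(\tau\in T(A)\), the restriction of \(\tau\) to \(B\) is a (non-normalized) positive combination of point-evaluation traces, with total mass \(\tau(p)\ge1-\delta\). Thus
\[
\tau(e_ia)\approx\tau(e_ipap)\approx\tfrac1n\tau(pap)\approx\tfrac1n\tau(a),
\]
with errors \(O(\varepsilon+\sqrt\delta+\delta)\) uniformly over \(\tau\). I then pass to the central sequence algebra and use the Kirchberg-type \(\varepsilon\)-test to obtain uniform property \(\Gamma\).

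\textbf{Main obstacle.} The main obstacle is the mismatch between the tracial smallness of \(1-p\) and the requirement of uniformity over all of \(T(A)\), since the definition may control \(1-p\) only through Cuntz subequivalence. If so, I would first use simplicity and non-elementarity to find, for each \(\delta\), a positive element whose trace is uniformly at most \(\delta\) on the compact set \(T(A)\). Cuntz domination by it then yields \(d_\tau(1-p)\le\delta\) uniformly.
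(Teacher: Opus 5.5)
\textbf{There is a genuine gap in Step~1: nothing in your argument produces approximate centrality.}

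Cutting the approximant \(c\) of one \(a\in F\) along spectral gaps gives projections that commute with \(c\). There is no reason for them to almost commute with the approximants of the other elements of \(F\), which need not commute with \(c\). Nor do spectral projections of \(c\) satisfy \(\tau(e_ic)\approx\tau(c)/n\) in general, since they weight different parts of its spectrum.

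The ``core local lemma'' you propose to invoke is not in the paper, and no lemma of that shape can hold. Definition~\ref{def:vaccaro-local-subquadratic} only requires elements of \(F\) to be \(\|\cdot\|_{2,T(A)}\)-close to elements of \(C_1\). It says nothing about how the approximants sit inside \(C\), so such a lemma would have to work for arbitrary finite subsets of a block. It does not. Quantum expanders give, for every \(r\), unitaries \(u_1,\dots,u_m\in M_r\) with \(m\) fixed, and a constant \(c_0>0\) independent of \(r\), such that
\(\max_k\|[x,u_k]\|_{2,\tr}\geq c_0\|x-\tr(x)1\|_{2,\tr}\), where \(\|x\|_{2,\tr}=\tr(x^*x)^{1/2}\). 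Hence no projection of normalized trace \(1/n\) almost commutes with them. Central sequences are a global feature of \(A\), and the partition cannot be expected to live inside the local model.

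Your guessed form of the definition is also off. There is no commutator condition \(\|[a,p]\|<\varepsilon\) and no Cuntz control. The complement of the local model is controlled by putting \(1_A\) into the finite set and applying Lemma~\ref{lem:unit-cutoff}. The bound \(r_i\geq R\) is part of the hypothesis, so your small-rank worries are moot.

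\textbf{What the paper does instead.} It uses the multiplicity estimates only for a statement that needs no centrality.
\begin{enumerate}
\item Lemma~\ref{lem:locally-compact-rank-avoidance}, applied block by block together with the cutoff of Lemma~\ref{lem:unit-cutoff}, verifies property \((FG)\). Each sequence of self-adjoint contractions is replaced, up to a \(\|\cdot\|_{2,T(A)}\)-null error, by one whose spectral mass near finitely many cut points is uniformly small over \(T(A)\).
\item Proposition~\ref{prop:S-criterion} then gives real rank zero of \(A^{\U}\) (Theorem~\ref{thm:vaccaro-local-rr0}).
\item Centrality comes only afterwards, from Theorem~\ref{thm:VW-bridge}. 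Real rank zero and the absence of finite-dimensional representations (the only use of simplicity and non-elementarity) give a unital copy of \(\bigotimes_j(M_2\oplus M_3)\) in \(M_m(A)^{\U}\). This yields full projections of uniformly small trace, hence tracial almost divisibility via Vaccaro's lemma.
\item The local models give tracially locally finite nuclear dimension (Lemma~\ref{lem:vaccaro-local-tlfnd}).
\item Winter's centralization argument then yields c.p.c.\ order zero maps \(M_n\to A\), with range in \(A\) rather than in the local model. These approximately commute with the approximants and satisfy \(\tau(\varphi(1_n))\approx 1\) uniformly in \(\tau\).
\end{enumerate}
Thus \(A\) is uniformly McDuff, and uniform property \(\Gamma\) follows. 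This centralization step, or some substitute for it, is the missing idea in your proposal.
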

\noindent
Again, the second author's construction of a unital simple AH algebra with quadratic dimension growth that fails uniform property $\Gamma$ shows that the subquadratic condition in Theorem~\ref{localgamma} is optimal.

The main technical tool behind these results is an iterated
rank-avoidance theorem for sections of self-adjoint endomorphism
bundles---here viewed as the self-adjoint elements of a homogeneous C$^*$-algebra---together with a relative form adapted to recursive
subhomogeneous algebras.  It converts the codimension calculation
described after Figure~\ref{fig:geometric-thresholds} above into a perturbation argument with uniform control of
spectral multiplicities.  The relevant bad spectral locus is generally a stratified space with smooth manifold strata, each of which is defined by an exact nullity value.  In particular, it cannot be avoided by a single application of smooth transversality.  In
Proposition~\ref{prop:compact-bundle-rank-avoidance}, we instead pass
iteratively through the strata.  Once the higher-nullity strata and the bad loci
associated to previously treated spectral values have been deleted,
the next stratum is a closed smooth subfibration of the remaining open
fiber subbundle.  A bundle-valued avoidance theorem of Jacob then removes
it without leaving that open subbundle, and hence without reintroducing
any stratum already avoided \cite{JacobDimAvoidance}.  This gives the spectral multiplicity estimates required
in the sequel.

A relative form of this method is required in the RSH
setting.  At a pullback block
\(S\oplus_{C(Y,M_r)}C(X,M_r)\), the section over \(Y\) is determined
by the preceding stage and already satisfies the required multiplicity
bounds.  Proposition~\ref{prop:relative-compact-rank-avoidance}
perturbs an extension over \(X\) while fixing the section on \(Y\).  Induction through the chosen pullback decomposition then
gives Theorem~\ref{thm:rsh-rank-avoidance}, without revisiting the
earlier blocks or accumulating error in proportion to the length of
the decomposition.

We close this section with a roadmap for the sequel.  Section~\ref{sec:ultrapower} introduces property \((FG)\), a finite-grid variation on Property \((S)\) from \cite{ElliottNiu25}, and establishes its connection to uniform property \(\Gamma\).  Section~\ref{sec:rank-avoidance} establishes the \(k^2\) codimension calculation and bundle-valued rank avoidance.  Section~\ref{sec:local} proves the unital local homogeneous theorem, while Section~\ref{sec:AH} deduces the AH theorem.  We note here that the AH theorem follows from its ASH counterpart, but we have deliberately addressed it separately as its proof is considerably less technical.
Section~\ref{sec:vaccaro-local} treats possibly nonunital homogeneous local models and proves the Vaccaro-style theorem. Section~\ref{sec:rsh-local} proves relative rank avoidance for arbitrary RSH decompositions and deduces the main RSH-inductive-limit theorem.

\vspace{2mm}
\noindent
{\bf Acknowledgements.}  The second author gratefully acknowledges the support of the Simons Foundation (SFI-MPS-TSM-00025606).

\vspace{2mm}
\noindent
{\bf AI Statement.} ChatGPT was used for language proofreading,
notational consistency, and reference checking. The mathematical
content of the paper is due solely to the authors.

\section{Uniform tracial ultrapowers and the property \texorpdfstring{\((FG)\)}{(S)} criterion}
\label{sec:ultrapower}

\subsection{Uniform tracial ultrapowers}

Throughout the paper all traces are tracial states, and we use \(T(A)\) to denote the set thereof.  If \(A\) is unital and \(T(A)\neq\emptyset\), set
\[
    \|a\|_{2,T(A)}:=\sup_{\tau\in T(A)}\tau(a^*a)^{1/2}.
\]
This is the \emph{uniform 2-norm relative to \(T(A)\)}.
We also use \(\|a\|_{2,T}\) for the corresponding seminorm over a nonempty subset \(T\subseteq T(A)\).

Fix a free ultrafilter \(\U\) on \(\N\) and let
\[
        c_{\U}(A)=\{(a_n)_n\in \ell^\infty(A): \lim_{\U}\|a_n\|_{2,T(A)}=0\}.
\]
The uniform tracial ultrapower of \(A\) is \(A^{\U}:=\ell^\infty(A)/c_{\U}(A)\). We write \((a_n)_{\U}\) for the class of a bounded sequence. When the algebra is clear from context, we write simply \(c_{\U}\) for \(c_{\U}(A)\).

We denote by
\[
        \iota_A\colon A\longrightarrow A^{\U}
\]
the unital \(*\)-homomorphism induced by constant sequences.  In the
nonsimple case this map need not be injective.  Accordingly, whenever
nonsimple algebras are under consideration, \(A^{\U}\cap A'\) means
\(A^{\U}\cap\iota_A(A)'\).

Every sequence \((\tau_n)_n\) in \(T(A)\) induces a trace on \(A^{\U}\) by
\[
        (a_n)_{\U}\longmapsto\lim_{\U}\tau_n(a_n).
\]
We call traces of this form \emph{limit traces}, and write \(T_{\U}(A)\)
for their weak-* closed convex hull in \(T(A^{\U})\).

\begin{defn}
\label{def:uniform-gamma-mcduff}
Let \(A\) be separable and unital, with \(T(A)\neq\emptyset\).
\begin{enumerate}[(1)]
\item The algebra \(A\) is \emph{uniformly McDuff} if, for every
      \(n\in\N\), there is a unital \(*\)-homomorphism
\[
        M_n\longrightarrow A^{\U}\cap\iota_A(A)'.
\]
\item The algebra \(A\) has \emph{uniform property \(\Gamma\)} if, for
      every \(n\in\N\), there are pairwise orthogonal projections
      \(p_1,\ldots,p_n\in A^{\U}\cap\iota_A(A)'\), summing to
      \(1_{A^{\U}}\), such that
\[
        \rho(\iota_A(a)p_i)=\frac1n\rho(\iota_A(a)),
        \qquad a\in A,\ \rho\in T_{\U}(A),\ 1\leq i\leq n.
\]
\end{enumerate}
These are the standard definitions from
\cite[Definitions~1.1 and~1.2]{Vaccaro:preprint}.
\end{defn}

For \(\delta>0\), fix once and for all a continuous function \(\eta_\delta\colon\R\to[0,1]\) such that \(\eta_\delta(t)=1\) for \(|t|\leq \delta/2\), \(\eta_\delta(t)=0\) for \(|t|\geq \delta\), and \(\eta_\delta\) is affine on \([\delta/2,\delta]\) and on \([-\delta,-\delta/2]\). By forming \(\eta_\delta(a-\lambda)\) we can thus roughly isolate the part of the spectrum of a self-adjoint element \(a\) lying near \(\lambda\).

\subsection{Property \((FG)\): a finite-grid property \((S)\) criterion}

Many of the proofs below will use a finite-grid form of property \((S)\) introduced in \cite{ElliottNiu25}, which we introduce presently.

\begin{defn}
\label{def:property-S}
Let \(A\) be a unital \(C^*\)-algebra with \(T(A)\neq\emptyset\). We say that \(A\) has \emph{property \((FG)\)} if for every sequence \(a_n\in A_{1,\mathrm{sa}}\) and every finite set \(\Lambda\subset (-1,1)\), there are self-adjoint contractions \(b_n\in A\) and numbers \(\delta_n>0\) such that \(\lim_{\U}\|a_n-b_n\|_{2,T(A)}=0\) and
\[
        \lim_{\U}\max_{\lambda\in\Lambda}\sup_{\tau\in T(A)}
        \tau\big(\eta_{\delta_n}(b_n-\lambda)\big)=0.
\]
\end{defn}

\begin{prop}
\label{prop:S-criterion}
Let \(A\) be a unital \(C^*\)-algebra with \(T(A)\neq\emptyset\). If \(A\) has property \((FG)\), then \(A^{\U}\) has real rank zero.
\end{prop}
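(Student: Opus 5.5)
To show that \(A^{\U}\) has real rank zero, I will fix a self-adjoint \(x\in A^{\U}\) and \(\varepsilon>0\), and produce a self-adjoint element of finite spectrum within \(\varepsilon\) of \(x\) in norm. Finite-spectrum self-adjoints are exactly real linear combinations of orthogonal projections, and the closure of the set of self-adjoints with finite spectrum equals all self-adjoints precisely under real rank zero. Scaling, I may assume \(\|x\|<1\). Norms on \(\ell^\infty(A)/c_{\U}\) are quotient norms, so I can lift \(x\) to a sequence \((a_n)\) of self-adjoint contractions in \(A_{1,\mathrm{sa}}\) (lift, take real part, apply a continuous retraction \([-\infty,\infty]\to[-1,1]\)). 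Next I choose a finite grid \(\Lambda=\{\lambda_1<\dots<\lambda_m\}\subset(-1,1)\) with mesh less than \(\varepsilon\) and with \(\lambda_1<-1+\varepsilon\), \(\lambda_m>1-\varepsilon\). Applying property \((FG)\) gives \(b_n\) and \(\delta_n\) with \((b_n)_{\U}=x\) in \(A^{\U}\), since \(\lim_{\U}\|a_n-b_n\|_{2,T(A)}=0\). Moreover, \(\lim_{\U}\sup_\tau\tau(\eta_{\delta_n}(b_n-\lambda))=0\) for every \(\lambda\in\Lambda\).

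The key step is to build a continuous function \(f_n\colon[-1,1]\to\R\) that equals the constant \(c_j\) (a point of \([\lambda_j,\lambda_{j+1}]\), with the obvious end conventions) on each gap between consecutive grid points away from the \(\delta_n/2\)-neighbourhoods of the \(\lambda_j\). Inside each window \([\lambda_j-\delta_n/2,\lambda_j+\delta_n/2]\) it interpolates linearly. Then \(|f_n(t)-t|\le 2\varepsilon\) on \([-1,1]\), so \(\|f_n(b_n)-b_n\|\le 2\varepsilon\). Separately, \(f_n(b_n)\) differs from a finite-spectrum element only on the spectral windows. Concretely, I will write \(f_n=g_n+h_n\), where \(g_n\) is a step-valued function, i.e.\ \(g_n(b_n)\) "should be" \(\sum_j c_j\,\chi_j(b_n)\). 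Because \(\chi_j\) is discontinuous, I instead pass to the ultrapower. Let \(e_{j}=(\chi_{j,n}(b_n))_{\U}\), with \(\chi_{j,n}\) continuous, equal to \(1\) on the \(j\)-th gap, \(0\) outside the \(\delta_n/2\)-enlarged gap, and taking values in \([0,1]\). Then \(\chi_{j,n}-\chi_{j,n}^2\) is supported in the windows, so it is dominated by \(\sum_\lambda\eta_{\delta_n}(\cdot-\lambda)\). Hence \(\|\chi_{j,n}(b_n)-\chi_{j,n}(b_n)^2\|_{2,T(A)}^2\le\sup_\tau\tau(\sum_\lambda\eta_{\delta_n}(b_n-\lambda))\to0\) along \(\U\). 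The same argument gives \(\chi_{j,n}\chi_{k,n}\approx0\) for \(j\ne k\), and \(\sum_j\chi_{j,n}\approx 1\). So the \(e_j\) are pairwise orthogonal projections summing to \(1\) in \(A^{\U}\).

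Then I set \(y=\sum_j c_je_j\), which has finite spectrum. Compare it with \(x=(b_n)_{\U}\). The difference \(b_n-\sum_jc_j\chi_{j,n}(b_n)\) is a continuous function of \(b_n\) bounded by roughly \(2\varepsilon\) in sup norm, up to a term supported in the windows. The window term has small uniform \(2\)-norm, hence vanishes in \(A^{\U}\). So \(\|x-y\|_{A^{\U}}\le 2\varepsilon\). To make the last step rigorous, I will note that in \(A^{\U}\) the norm of a class is at most \(\limsup_{\U}\) of the norms of any representative. Thus I can choose the representative \(F_n(b_n)\), where \(F_n(t)=t-\sum_jc_j\chi_{j,n}(t)\) is uniformly bounded by \(2\varepsilon\) off the windows. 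On the windows I modify it by an element that is \(0\) in \(A^{\U}\), namely (function supported in windows)\((b_n)\), whose \(2\)-norm is controlled by the \(\eta\)-traces.

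The main obstacle I expect is this last point: ensuring that window-supported functional calculus gives elements in \(c_{\U}\). For this I will use that any continuous \(\phi\) with \(0\le|\phi|\le C\) and support in \(\bigcup_\lambda\{|t-\lambda|\le\delta_n/2\}\) satisfies \(|\phi|^2\le C^2\sum_\lambda\eta_{\delta_n}(t-\lambda)\). This gives \(\tau(\phi(b_n)^2)\le C^2\sum_\lambda\tau(\eta_{\delta_n}(b_n-\lambda))\), uniformly in \(\tau\). I also need the windows to be disjoint, which holds since \(\delta_n\) can be shrunk below half the mesh without harming the hypothesis, because \(\eta_\delta\) is pointwise monotone in \(\delta\). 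Since \(\varepsilon\) was arbitrary, this shows that self-adjoints with finite spectrum are dense in \(A^{\U}_{\mathrm{sa}}\), and hence that \(A^{\U}\) has real rank zero.
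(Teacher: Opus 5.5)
Your proposal is correct and follows essentially the paper's route. You lift \(x\) to self-adjoint contractions and apply \((FG)\) on a finite grid. You then use a continuous staircase function whose transitions lie in the windows where \(\eta_{\delta_n}(\cdot-\lambda)=1\). Finally, you show that window-supported functional calculus of \(b_n\) vanishes in \(A^{\U}\) via the domination \(|\phi|^2\le C^2\sum_{\lambda}\eta_{\delta_n}(\cdot-\lambda)\).

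The only difference is how you certify finite spectrum. The paper takes \(y=(h_n(b_n))_{\U}\) itself, so \(\|x-y\|<\varepsilon\) is immediate, and then shows \(p(y)=0\) for \(p(s)=\prod_j(s-\mu_j)\) and applies spectral mapping. You instead construct the projections \(e_j\) explicitly and set \(y=\sum_j c_je_j\). That needs the extra splitting \(F_n=(\mathrm{id}-f_n)+(f_n-\sum_j c_j\chi_{j,n})\) to bound \(\|x-y\|\), which is slightly longer but equally valid.
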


\begin{proof}
Let \(q\colon \ell^\infty(A)\to A^{\U}\) be the quotient map. We first note that every self-adjoint contraction in \(A^{\U}\) has a representative by self-adjoint contractions in \(A\). Indeed, if \(x\in A^{\U}\) is self-adjoint and \(\|x\|\leq 1\), start with any lift and replace it by its self-adjoint part to obtain a self-adjoint lift \(d=(d_n)_n\in \ell^\infty(A)\). Let \(\chi\colon\R\to[-1,1]\) be the clipping function \(\chi(t)=\max\{-1,\min\{t,1\}\}\). Functional calculus commutes with quotients, so \(q((\chi(d_n))_n)=\chi(q(d))=\chi(x)=x\). Thus, after replacing \(d_n\) by \(\chi(d_n)\), we may lift \(x\) by a sequence \(a_n\in A_{1,\mathrm{sa}}\).

It is enough to approximate self-adjoint contractions in \(A^{\U}\) by self-adjoint elements with finite spectrum.  Let \(x=(a_n)_{\U}\in A^{\U}\) be a self-adjoint contraction with \(a_n\in A_{1,\mathrm{sa}}\) and fix \(\varepsilon>0\). Choose a partition \(-1=t_0<t_1<\cdots<t_m=1\) whose mesh is less than \(\varepsilon\) with \(m\geq2\). Put \(\Lambda=\{t_1,\ldots,t_{m-1}\}\), and let \(\mu_j=(t_{j-1}+t_j)/2\) for \(j=1,\ldots,m\).

By property \((FG)\), there are \(b_n\in A_{1,\mathrm{sa}}\) and \(\delta_n>0\) such that \((a_n-b_n)_n\in c_{\U}(A)\) and
\[
        \lim_{\U}\max_{1\leq j\leq m-1}\sup_{\tau\in T(A)}
        \tau\big(\eta_{\delta_n}(b_n-t_j)\big)=0.
\]
For each \(n\), choose \(\alpha_n>0\) such that \(\alpha_n\leq\delta_n/2\) and \(\alpha_n\leq (1/4)\min_{1\leq j\leq m}(t_j-t_{j-1})\). Define a continuous function \(h_n\colon[-1,1]\to[-1,1]\) by setting \(h_n=\mu_j\) on \([t_{j-1}+\alpha_n,t_j-\alpha_n]\) for \(2\leq j\leq m-1\), on \([t_0,t_1-\alpha_n]\) for \(j=1\), and on \([t_{m-1}+\alpha_n,t_m]\) for \(j=m\), and on each interval \([t_j-\alpha_n,t_j+\alpha_n]\), \(1\leq j\leq m-1\), let \(h_n\) be affine from \(\mu_j\) to \(\mu_{j+1}\). Set
\[
        m_0=\max_{1\leq j\leq m}(t_j-t_{j-1}).
\]
Our choice of \(\alpha_n\) then gives
\[
        \sup_{t\in[-1,1]}|h_n(t)-t|
        \leq 3m_0/4<\varepsilon
\]
for every \(n\). With \(y=(h_n(b_n))_{\U}\), we have \(x=(b_n)_{\U}\), and the quotient norm is bounded by the \(\ell^\infty\)-norm of a representative. Hence
\[
\begin{aligned}
        \|x-y\|
        &\leq
        \big\|(b_n-h_n(b_n))_n\big\|_{\ell^\infty(A)}\\
        &=\sup_n\|b_n-h_n(b_n)\|\\
        &\leq 3m_0/4
        <\varepsilon.
\end{aligned}
\]

It remains to prove that \(y\) has finite spectrum. Let \(p(s)=\prod_{j=1}^m(s-\mu_j)\) for \(s \in [-1,1]\). The function \(p\circ h_n\) vanishes off the union of the intervals \([t_j-\alpha_n,t_j+\alpha_n]\), \(1\leq j\leq m-1\), because outside those transition intervals the function \(h_n\) is one of the constants \(\mu_1,\ldots,\mu_m\). Set 
\[
M=\sup_{s\in[-1,1]}|p(s)|^2
\]
and fix \(t\in[-1,1]\). If \(t\) lies outside the union
of the transition intervals, then \(p(h_n(t))=0\). Otherwise,
\(t\in[t_j-\alpha_n,t_j+\alpha_n]\) for some \(1\leq j\leq m-1\), so
\[
        |t-t_j|\leq\alpha_n\leq\frac{\delta_n}{2}.
\]
It follows that \(\eta_{\delta_n}(t-t_j)=1\). Moreover,
\(h_n(t)\in[-1,1]\), and hence \(|p(h_n(t))|^2\leq M\). Thus, in either
case,
\[
        |p(h_n(t))|^2
        \leq M\sum_{j=1}^{m-1}\eta_{\delta_n}(t-t_j),
        \ t\in[-1,1].
\]
Since \(b_n\) is a self-adjoint contraction, we have
\(\sigma(b_n)\subseteq[-1,1]\). Functional calculus and the inequality above therefore yield
\[
        p(h_n(b_n))^*p(h_n(b_n))
        \leq
        M\sum_{j=1}^{m-1}\eta_{\delta_n}(b_n-t_j).
\]
Applying an arbitrary \(\tau\in T(A)\) and taking suprema we have
\[
\begin{aligned}
        \|p(h_n(b_n))\|_{2,T(A)}^2
        &=\sup_{\tau\in T(A)}
          \tau\big(p(h_n(b_n))^*p(h_n(b_n))\big)\\
        &\leq
          M\sup_{\tau\in T(A)}
          \sum_{j=1}^{m-1}
          \tau\big(\eta_{\delta_n}(b_n-t_j)\big)\\
        &\leq
          M\sum_{j=1}^{m-1}\sup_{\tau\in T(A)}
          \tau\big(\eta_{\delta_n}(b_n-t_j)\big).
\end{aligned}
\]
The right-hand side tends to zero along \(\U\). Thus
\((p(h_n(b_n)))_n\in c_{\U}(A)\), so \(p(y)=0\) in \(A^{\U}\).
Since \(h_n\) is real-valued and \(b_n\) is self-adjoint, each
\(h_n(b_n)\) is self-adjoint, and hence so is \(y\).  Now
the polynomial spectral mapping theorem gives
\(\sigma(y) \subseteq p^{-1}(0) = \{\mu_1,\ldots,\mu_m\}\). It follows that
 \(y\) is a
self-adjoint finite-spectrum element within \(\varepsilon\) of \(x\).
Finite-spectrum self-adjoint elements are therefore dense in
\((A^{\U})_{\mathrm{sa}}\), and \(A^{\U}\) has real rank zero.
\end{proof}

\subsection{The bridge to uniform property \texorpdfstring{\(\Gamma\)}{Gamma}}

In the sequel we will deduce uniform property \(\Gamma\) from the real rank zero property for \(A^{\U}\) through recent work of Vaccaro.

\begin{defn} \cite[Definition 1.5]{Vaccaro:preprint}
\label{def:tlfnd}
A unital \(C^*\)-algebra \(A\) with \(T(A)\neq\emptyset\) has
\emph{tracially locally finite nuclear dimension} if, for every finite
subset \(\mathcal F\subset A_1\) and every \(\varepsilon>0\), there is a
\(C^*\)-subalgebra \(C\subseteq A\) with finite nuclear dimension such
that, for every \(a\in\mathcal F\), there is \(c\in C_1\) with
\[
        \|a-c\|_{2,T(A)}<\varepsilon.
\]
\end{defn}

The simple case of the bridge below is
\cite[Theorem~2.3]{Vaccaro:preprint}.  We use tracial almost divisibility
in the sense of \cite[Definition~1.3]{Vaccaro:preprint}.  We first record
that the relevant implication in Vaccaro's argument does not require
simplicity.

\begin{lem}
\label{lem:winter-centralization}
Let \(A\) be a separable unital \(C^*\)-algebra with
\(T(A)\neq\emptyset\).  If \(A\) is tracially almost divisible and has
tracially locally finite nuclear dimension, then \(A\) is uniformly
McDuff.
\end{lem}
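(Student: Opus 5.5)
We will follow Vaccaro's argument for \cite[Theorem~2.3]{Vaccaro:preprint} and check that simplicity is never used in the implication ``tracially almost divisible plus tracially locally finite nuclear dimension \(\Rightarrow\) uniformly McDuff''. The strategy is Winter's centralization technique, carried out in the uniform tracial ultrapower. The plan has three stages. First, tracial almost divisibility produces, for each \(n\), approximately central, approximately unital c.p.c.\ order zero maps \(M_n\to A\) in the \(2\)-norm. Second, these are made genuinely central against a finite set using the local finite nuclear dimension. Third, the order zero maps are promoted to unital \(*\)-homomorphisms into \(A^{\U}\cap\iota_A(A)'\).

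Concretely, fix \(n\), a finite set \(\mathcal F\subset A_1\) and \(\varepsilon>0\).

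\textbf{Step 1.} By tracially locally finite nuclear dimension (Definition~\ref{def:tlfnd}), choose \(C\subseteq A\) with \(\dim_{\mathrm{nuc}}C=d<\infty\) and elements \(c_a\in C_1\) with \(\|a-c_a\|_{2,T(A)}<\varepsilon\).

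\textbf{Step 2.} Since \(C\) has finite nuclear dimension, take a \((d+1)\)-colored factorization of \(\mathrm{id}_C\) through finite-dimensional algebras \(F_0,\dots,F_d\), approximately multiplicative on \(\{c_a\}\). Tracial almost divisibility in \(A\) lets us place a c.p.c.\ order zero copy of \(M_n\) almost commuting with the images of the downward maps. One tensors the matrix units of \(M_n\) against each finite-dimensional block, arranged so that the resulting map has large trace uniformly over \(T(A)\), meaning \(\tau(\phi(1))\geq 1-\eta\). Summing over the \(d+1\) colors and normalizing produces order zero maps \(\phi\colon M_n\to A\) with \(\|[\phi(x),c_a]\|\) small and \(\sup_\tau\tau(1-\phi(1))\) small.

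Reindexing over \(\mathcal F\uparrow\) dense, \(\varepsilon\to0\), gives an order zero map \(\Phi\colon M_n\to A^{\U}\cap\iota_A(A)'\) with \(\rho(1-\Phi(1))=0\) for every limit trace \(\rho\), hence for all of \(T_{\U}(A)\). The point to verify is that none of these estimates refers to ideals or to faithfulness of traces: everything is uniform over \(T(A)\), and \(c_{\U}(A)\) is defined via \(\|\cdot\|_{2,T(A)}\). So non-injectivity of \(\iota_A\) in the nonsimple case causes no trouble. This is exactly why the commutant is taken with respect to \(\iota_A(A)\).

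\textbf{Step 3.} We pass from \(\tau(1-\Phi(1))=0\) on \(T_{\U}(A)\) to \(\Phi(1)=1\) in \(A^{\U}\). For this we need that \(T_{\U}(A)\) determines the \(2\)-norm on \(A^{\U}\). This holds by definition: \(\|(x_n)\|\) in \(A^{\U}\) vanishes iff \(\lim_{\U}\sup_\tau \tau(x_n^*x_n)=0\), and the sup is realized along a sequence of traces, giving a limit trace. Hence \(1-\Phi(1)\), a positive element with \(\rho((1-\Phi(1))^2)\le\rho(1-\Phi(1))=0\) for all limit traces, is zero. An order zero map with \(\Phi(1)=1\) is a unital \(*\)-homomorphism, which gives uniform McDuffness.

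\textbf{Main obstacle.} The main obstacle is Step 2: the centralization. In Vaccaro's simple-case proof, the uniform lower bound \(\tau(\phi(1))\geq 1-\eta\) may be derived using strict positivity of traces or comparison facts valid in simple algebras. I would first try to show that tracial almost divisibility as defined in \cite[Definition~1.3]{Vaccaro:preprint} already delivers uniform-over-\(T(A)\) tracial estimates inside each hereditary piece \(\psi_i(F_i)\), so that the colored sum's trace can be bounded directly, color by color, using only tracial inequalities. If some step instead genuinely invokes simplicity, I would replace it by a Kirchberg-type \(\varepsilon\)-test (reindexing) argument in \(A^{\U}\). Such an argument only manipulates the tracial seminorm, so it should also work in the nonsimple case.
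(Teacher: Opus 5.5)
Your skeleton matches the paper's proof: take approximants from tracially locally finite nuclear dimension, centralize against them à la Winter, reindex into $A^{\U}\cap\iota_A(A)'$, and get unitality of $\Phi$ from a uniform trace bound. Your Step~3 is fine; the paper simply uses $\|1_A-\varphi_j(1_n)\|_{2,T(A)}^2\le\sup_{\tau}\tau(1_A-\varphi_j(1_n))$ directly. The gap is in Step~2, the only substantive step, and it has two concrete defects.

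\emph{First, you centralize against $C$ itself.} Definition~\ref{def:tlfnd} only approximates in $\|\cdot\|_{2,T(A)}$, so $C$ need not contain $1_A$. For a general finite set $\mathcal F$, $C$ can also be tracially small. Any map built as you describe, by tensoring divisibility maps against the matrix blocks of the upward maps $F_i\to C$, lands in $\overline{CAC}$. Hence $\tau(\phi(1_n))$ is bounded by the trace of the support projection of $C$ in $A^{**}$, and the bound $\tau(\phi(1_n))\ge 1-\eta$ you claim fails in general. The paper repairs this by replacing $C_j$ with $\widetilde C_j=C^*(C_j,1_A)$. This is a quotient of the minimal unitization, so it still has finite nuclear dimension by \cite[Proposition~2.3(iv) and Remark~2.11]{WinterZacharias:AIM}.

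\emph{Second, your way of combining the $d+1$ colors does not work.} The color-wise maps have non-orthogonal ranges, so their sum is not order zero, and rescaling does not restore this. What you need is a single c.p.c.\ order zero map $\varphi\colon M_n\to A$ with $\|[\varphi(x),c]\|<\varepsilon\|x\|$ on the approximants and $\tau(\varphi(1_n))>1-\varepsilon$ for all $\tau\in T(A)$. That is exactly the content of \cite[Lemma~5.11]{Winter:IM12}. The paper imports this lemma rather than rederiving it, after checking that its proof uses only separability, unitality, tracial almost divisibility, and finite nuclear dimension of the subalgebra. Your fallback to a Kirchberg $\varepsilon$-test does not substitute for it: the $\varepsilon$-test assembles approximate solutions you already have, but it does not create approximately central, tracially large order zero maps.

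Once you apply the lemma to $\widetilde C_j$, one step remains that you left implicit:
$\|[\varphi_j(x),a]\|_{2,T(A)}\le\|[\varphi_j(x),c_{j,a}]\|+2\|x\|\,\|a-c_{j,a}\|_{2,T(A)}$.
After that, your reindexing and Step~3 go through as written.
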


\begin{proof}
Fix \(n\in\N\). Choose increasing finite sets
\(\mathcal F_j\subset A_1\) with norm-dense union and numbers
\(\varepsilon_j\searrow0\). By tracially locally finite nuclear
dimension, for each \(j\) there are a finite-nuclear-dimension
subalgebra \(C_j\subseteq A\) and elements
\(c_{j,a}\in(C_j)_1\), \(a\in\mathcal F_j\), such that
\[
        \|a-c_{j,a}\|_{2,T(A)}<\varepsilon_j.
\]
Put \(\widetilde C_j=C^*(C_j,1_A)\). The canonical map from the
minimal unitization of \(C_j\) onto \(\widetilde C_j\) is surjective.
Hence \(\widetilde C_j\) has finite nuclear dimension by
\cite[Proposition~2.3(iv) and Remark~2.11]{WinterZacharias:AIM}.

We use the finite-set conclusion of
\cite[Lemma~5.11]{Winter:IM12}. Although that lemma is stated for a
simple ambient algebra, inspection of its proof, and of the
intermediate propositions used there, shows that simplicity and
non-elementarity are not used once tracial almost divisibility is
assumed. The construction uses only separability and unitality of the
ambient algebra, tracial almost divisibility, finite nuclear dimension
of the subalgebra, functional calculus, trace estimates, and diagonal
sequence arguments. In particular, no step passes from a nonzero
positive element to a full one or invokes faithfulness of traces.
Thus the same proof, applied to \(\widetilde C_j\), gives a c.p.c.\
order zero map
\(\varphi_j\colon M_n\longrightarrow A\)
such that
\[
        \|[\varphi_j(x),c_{j,a}]\|
        <\varepsilon_j\|x\|,
        \qquad
        a\in\mathcal F_j,\ x\in M_n,
\]
and
\[
        \tau(\varphi_j(1_n))>1-\varepsilon_j,
        \qquad \tau\in T(A).
\]
This is the finite-set fact used in
\cite[Proposition~1.6, \((2)\Rightarrow(1)\)]{Vaccaro:preprint}.

For \(a\in\mathcal F_j\) and \(x\in M_n\),
\[
\begin{aligned}
        \|[\varphi_j(x),a]\|_{2,T(A)}
        &\leq
        \|[\varphi_j(x),c_{j,a}]\|
        +2\|x\|\,\|a-c_{j,a}\|_{2,T(A)} \\
        &<3\varepsilon_j\|x\|.
\end{aligned}
\]
Since the sets \(\mathcal F_j\) increase to a norm-dense subset of
\(A_1\) and the maps \(\varphi_j\) are contractive, and
\[
        \lim_{j\to\infty}
        \|[\varphi_j(x),a]\|_{2,T(A)}=0,
        \qquad a\in A,\ x\in M_n.
\]
Thus
\[
        \Phi\colon M_n\longrightarrow A^{\U},
        \qquad
        \Phi(x)=(\varphi_j(x))_{\U},
\]
has range in \(A^{\U}\cap\iota_A(A)'\).

Moreover, \(0\leq\varphi_j(1_n)\leq1_A\), and therefore
\[
\begin{aligned}
        \|1_A-\varphi_j(1_n)\|_{2,T(A)}^2
        &\leq
        \sup_{\tau\in T(A)}
        \tau(1_A-\varphi_j(1_n)) \\
        &<\varepsilon_j.
\end{aligned}
\]
Hence \(\Phi(1_n)=1_{A^{\U}}\). A unital c.p.c.\ order zero map from
\(M_n\) is a \(*\)-homomorphism. Since \(n\) was arbitrary, \(A\) is
uniformly McDuff.
\end{proof}

In the nonsimple setting, the absence of finite-dimensional
representations supplies the small full elements needed in Vaccaro's
real-rank-zero argument.

\begin{thm}
\label{thm:VW-bridge}
Let \(A\) be a separable, unital \(C^*\)-algebra with
\(T(A)\neq\emptyset\) and no nonzero finite-dimensional
representations.  Suppose that \(A\) has tracially locally finite
nuclear dimension.  If \(A^{\U}\) has real rank zero, then \(A\) is
uniformly McDuff, and hence has uniform property \(\Gamma\).
\end{thm}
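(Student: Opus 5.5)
The plan is to reduce to Lemma~\ref{lem:winter-centralization}. Since tracially locally finite nuclear dimension is assumed, it suffices to show that \(A\) is tracially almost divisible in the sense of \cite[Definition~1.3]{Vaccaro:preprint}. Uniform property \(\Gamma\) then follows from uniform McDuffness by the standard argument: a unital embedding \(M_n\to A^{\U}\cap\iota_A(A)'\) gives diagonal matrix units \(e_{11},\dots,e_{nn}\). These are pairwise orthogonal projections summing to \(1\), and they are unitarily equivalent through unitaries in the relative commutant. Any \(\rho\in T_{\U}(A)\) is a trace on \(A^{\U}\), so \(\rho(\iota_A(a)e_{ii})\) is independent of \(i\) and therefore equals \(\frac1n\rho(\iota_A(a))\).

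To obtain tracial almost divisibility, I will follow Vaccaro's proof of \cite[Theorem~2.3]{Vaccaro:preprint} and isolate the single point where simplicity enters. The mechanism is as follows. Real rank zero of \(A^{\U}\) lets one approximate, in uniform \(2\)-norm, a positive contraction by finite-spectrum elements. This in turn lets one split the unit of \(A^{\U}\) into projections of prescribed small trace, which gives division of \(1\) into roughly \(k\) tracially equal pieces. Concretely, I will first produce a positive element \(h\in A\) such that \(\tau(h)\) is small and nonzero, uniformly over \(T(A)\), and then use real rank zero of \(A^{\U}\) to cut \(h\) into projections. Vaccaro's key input is a nonzero positive element whose traces are uniformly small but bounded below, together with the ability to compare such elements tracially.

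The substitute for simplicity comes from the absence of finite-dimensional representations. Given \(k\), I will show that there is a full positive element \(e\in A\) with \(\sup_{\tau}\tau(e)\leq 1/k\) that is full in \(A\). The argument runs through the irreducible representations \(\pi\) of \(A\). Each is infinite-dimensional, so by Glimm-type arguments one can find many orthogonal nonzero positive elements locally. Compactness of \(\mathrm{Prim}(A)\), a consequence of unitality, lets one patch finitely many of these into \(k\) pairwise orthogonal positive elements, each full. Their traces sum to at most \(1\), so some ordering makes each \(\tau(e_i)\) small on average. Uniformity over \(\tau\) is handled by using that all \(e_i\) are Cuntz-comparable up to a factor via fullness, so no trace can concentrate on one of them.

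The main obstacle is this construction of full elements with uniformly small trace. Fullness only gives \(1\precsim e^{\oplus N}\) for some \(N\) depending on \(e\), with no uniform control, so I expect to need to exploit the orthogonal full family directly. The idea is that each \(e_i\) is full and they are mutually orthogonal, so Vaccaro's divisibility estimate can be run with \(e_1,\dots,e_k\) in place of the elements obtained from simplicity. Everything after that is functional calculus in \(A^{\U}\) using real rank zero.
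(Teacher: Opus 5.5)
Your outer reduction matches the paper: tracial almost divisibility plus Lemma~\ref{lem:winter-centralization} gives uniform McDuffness, and the matrix units of a unital $M_n\to A^{\U}\cap\iota_A(A)'$ give uniform property $\Gamma$. The gap is in the step that is supposed to replace simplicity.

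First, the target as you state it is vacuous. The element $e=k^{-1}1_A$ is already full with $\sup_\tau\tau(e)=1/k$. What the divisibility lemma actually needs is a full element with small \emph{dimension function} $d_\rho$, or a full projection of small trace.

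Second, your construction of such an element does not work. Local Glimm-type arguments plus quasi-compactness of $\mathrm{Prim}(A)$ naturally give finitely many $b_1,\dots,b_N$ with $k\langle b_j\rangle\le\langle 1_A\rangle$ whose ideal is all of $A$. These need not be orthogonal or individually full, and $N$ is not controlled by $k$. So the full element $b_1+\cdots+b_N$ only satisfies $d_\tau\le N/k$. Even granting $k$ pairwise orthogonal full elements $e_1,\dots,e_k$, the pigeonhole bound gives an index $i(\tau)$ that depends on $\tau$. Fullness only gives $\langle e_i\rangle\le M\langle e_j\rangle$ with $M$ depending on the chosen elements, hence $d_\tau(e_i)\le M/k$ and no uniform smallness. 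This is exactly the obstacle you flagged, and running Vaccaro's estimate on the family $e_1,\dots,e_k$ does not remove it.

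Third, there is a mismatch of trace spaces. A good $e\in A$ controls $d_\rho(\iota_A(e))$ only for $\rho\in T_{\U}(A)$. The paper applies Vaccaro's lemma with $d_\rho(q)<\delta$ for all $\rho\in T(B)$, and traces on the ultrapower need not be limit traces.

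The missing idea is to produce the small full elements inside the ultrapower, where real rank zero lives, rather than in $A$. The paper's steps are:
\begin{enumerate}[(1)]
\item Fix $m$, set $D=M_m(A)$ and $B=D^{\U}\cong M_m(A^{\U})$, which has real rank zero by \cite{BrownPedersen:JFA}.
\item Observe that $B$ has no nonzero finite-dimensional representations, since such a representation would restrict along the unital map $D\to B$ to one of $D$, and hence of $A$.
\item Use real rank zero together with the absence of finite-dimensional representations to get a unital embedding $\bigotimes_j(M_2\oplus M_3)\to B$ \cite[Corollary~7]{ElliottRordam:Abel}.
\item Take $q_N$ to be the image of a projection of rank one in every simple summand of the first $N$ factors. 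It is full, and $\rho(q_N)\le 2^{-N}$ for \emph{every} $\rho\in T(B)$, because every trace on $B$ restricts to a trace on the tensor product.
\item Apply Vaccaro's Lemma~2.1 in $B$ to get $\Phi\colon M_n\to\overline{\bar aB\bar a}$. Lift it to c.p.c.\ order zero maps into $\overline{aDa}$, and recover the estimate uniformly over $T(D)$ by a limit-trace argument along $\U$.
\end{enumerate}

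Real rank zero alone does not split the unit into small projections (consider $\C$). It is its combination with the absence of finite-dimensional representations, transferred to $B$, that does. Your outline also omits the matrix amplifications $M_m(A)$ and the lifting step back from $B$ to $D$.
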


\begin{proof}
Fix \(m\in\N\), set \(D=M_m(A)\), and write
\[
        B=D^{\U}\cong M_m(A^{\U}).
\]
Then \(B\) has real rank zero by
\cite[Theorem~2.10]{BrownPedersen:JFA}. It has no nonzero
finite-dimensional representations: a nonzero finite-dimensional
representation of \(B\), restricted along the unital constant-sequence
homomorphism \(D\to B\), would give a nonzero finite-dimensional
representation of \(D\), and hence, by restriction to a full corner,
of \(A\). Also \(T(B)\neq\emptyset\), since every trace on \(D\)
induces a limit trace on \(B\).

Let
\[
        P=\bigotimes_{j=1}^{\infty}(M_2\oplus M_3).
\]
By \cite[Corollary~7]{ElliottRordam:Abel}, there is a unital embedding
\(\psi\colon P\longrightarrow B\).
For each \(N\), choose in the first \(N\) tensor factors a projection
\(p_N\) having rank one in every simple summand, and tensor it with
the unit in the tail. The projection \(p_N\) is full in \(P\), so
\(q_N=\psi(p_N)\) is full in \(B\): the ideal generated by \(q_N\)
contains \(\psi(P)\), and hence contains \(1_B\). Every trace on \(B\)
restricts to a trace on \(P\), while every simple summand of the first
\(N\) tensor factors has size at least \(2^N\). Thus
\[
        \sup_{\rho\in T(B)}d_\rho(q_N)
        =
        \sup_{\rho\in T(B)}\rho(q_N)
        \leq2^{-N}.
\]

We now prove tracial almost divisibility. Fix
\(a\in D_{1,+}\), \(n\in\N\), and \(\varepsilon>0\), and let
\(\bar a\in B\) be the constant-sequence image of \(a\). For every
\(\delta>0\), choose \(N\) with \(2^{-N}<\delta\). Since \(q_N\) is
full,
\[
        \bar a\in\overline{Bq_NB},
        \qquad
        d_\rho(q_N)<\delta,
        \quad \rho\in T(B).
\]
Therefore \cite[Lemma~2.1]{Vaccaro:preprint}, applied in \(B\), gives
a \(*\)-homomorphism
\(\Phi\colon M_n\longrightarrow\overline{\bar aB\bar a}\)
such that
\[
        \rho(\Phi(1_n))
        \geq\rho(\bar a)-\varepsilon/2,
        \qquad \rho\in T(B).
\]

Let
\(Q\colon\ell^\infty(D)\longrightarrow B\)
be the quotient map. Since a surjective \(*\)-homomorphism maps the
hereditary subalgebra generated by a positive element onto the
hereditary subalgebra generated by its image, the restriction
\[
        \overline{a\ell^\infty(D)a}
        \longrightarrow
        \overline{\bar aB\bar a}
\]
is surjective. By the order-zero lifting argument used in
\cite[Proposition~1.6, \((1)\Rightarrow(2)\)]{Vaccaro:preprint},
\(\Phi\) lifts to a sequence of c.p.c.\ order zero maps
\[
        \varphi_j\colon
        M_n\longrightarrow\overline{aDa}
\]
such that
\[
        (\varphi_j(x))_{\U}=\Phi(x),
        \qquad x\in M_n.
\]

We claim that
\[
\begin{split}
        G=\bigl\{j\in\N:
        \tau(\varphi_j(1_n))>\tau(a)-\varepsilon
        \text{ for every }\tau\in T(D)\bigr\}
        \in\U.
\end{split}
\]
Otherwise, for every \(j\) in the \(\U\)-large complement of \(G\),
choose \(\tau_j\in T(D)\) such that
\[
        \tau_j(\varphi_j(1_n))
        \leq\tau_j(a)-\varepsilon,
\]
and choose \(\tau_j\) arbitrarily at the remaining indices. The
associated limit trace \(\rho\in T(B)\) would satisfy
\[
        \rho(\Phi(1_n))
        \leq\rho(\bar a)-\varepsilon,
\]
contrary to the choice of \(\Phi\). Thus any \(j\in G\) gives the
order zero map required for tracial almost divisibility of \(a\) in
\(D\). Since \(a,n,\varepsilon\), and \(m\) were arbitrary, \(A\) is
tracially almost divisible.

Lemma~\ref{lem:winter-centralization} now shows that \(A\) is uniformly
McDuff. To see uniform property \(\Gamma\) directly, fix \(n\) and a
unital \(*\)-homomorphism
\[
        \Theta\colon M_n\longrightarrow
        A^{\U}\cap\iota_A(A)'.
\]
Put
\[
        p_i=\Theta(e_{ii}),
        \qquad
        v_{ij}=\Theta(e_{ij}).
\]
For every \(a\in A\), every \(\rho\in T_{\U}(A)\), and every \(i,j\),
\[
\begin{aligned}
        \rho(\iota_A(a)p_i)
        &=\rho(\iota_A(a)v_{ij}v_{ij}^*) \\
        &=\rho(v_{ij}^*\iota_A(a)v_{ij}) \\
        &=\rho(\iota_A(a)v_{ij}^*v_{ij}) \\
        &=\rho(\iota_A(a)p_j).
\end{aligned}
\]
Since \(\sum_i p_i=1\), each of these values is
\[
        \frac1n\rho(\iota_A(a)).
\]
Hence \(A\) has uniform property \(\Gamma\).
\end{proof}

\section{Rank avoidance below quadratic scale}
\label{sec:rank-avoidance}

\subsection{The self-adjoint rank-defect locus}

The source of the quadratic phenomenon that governs our results is an elementary dimension count.

\begin{lem}
\label{lem:rank-defect-codim}
Let \(r\in\N\), \(0\leq j\leq r\), and \(\lambda\in\R\).  The exact-nullity locus
\[
        \Sigma^{=j}_{r,\lambda}
        =\{a\in M_r(\C)_{\mathrm{sa}}:\dim\ker(a-\lambda1_r)=j\}
\]
is a smooth, possibly disconnected, embedded submanifold of
\(M_r(\C)_{\mathrm{sa}}\) of real codimension \(j^2\).  It follows that
for \(1\leq k\leq r\),
\[
        \Sigma_{r,k,\lambda}
        =\{a\in M_r(\C)_{\mathrm{sa}}:\dim\ker(a-\lambda1_r)\geq k\}
        =\bigcup_{j=k}^r\Sigma^{=j}_{r,\lambda}
\]
is a closed stratified subset whose largest stratum has real dimension
\(r^2-k^2\), and hence has real codimension \(k^2\) in the stratified
sense.
\end{lem}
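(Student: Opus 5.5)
The plan is to reduce at once to \(\lambda=0\), since translation \(a\mapsto a-\lambda 1_r\) is an affine diffeomorphism of \(M_r(\C)_{\mathrm{sa}}\) carrying \(\Sigma^{=j}_{r,\lambda}\) onto \(\Sigma^{=j}_{r,0}\). Write \(\Sigma^{=j}=\Sigma^{=j}_{r,0}\). It consists of the self-adjoint matrices of rank exactly \(r-j\). We will show it is a homogeneous space for the conjugation action of \(U(r)\) fibered over a Grassmannian, and then count dimensions. Concretely, define
\[
\pi\colon \Sigma^{=j}\to \mathrm{Gr}_j(\C^r),\qquad a\mapsto\ker a .
\]
The fiber over a subspace \(V\) is the set of invertible self-adjoint operators on \(V^\perp\), which is an open subset of \(\mathrm{End}(V^\perp)_{\mathrm{sa}}\cong\R^{(r-j)^2}\).

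The smooth structure comes from a local chart. Near a point \(a_0\in\Sigma^{=j}\), choose an orthonormal basis adapted to \(\ker a_0\oplus(\ker a_0)^\perp\) and write a nearby self-adjoint matrix in block form
\[
a=\begin{pmatrix} x & y\\ y^* & z\end{pmatrix},
\]
with \(x\) of size \(j\times j\) and \(z\) close to an invertible \((r-j)\times(r-j)\) block, hence itself invertible. By the Schur complement, \(\operatorname{rank}a=(r-j)+\operatorname{rank}(x-yz^{-1}y^*)\). So near \(a_0\) we have
\[
\Sigma^{=j}=\{a: F(a)=0\},\qquad F(a)=x-yz^{-1}y^*\in M_j(\C)_{\mathrm{sa}}.
\]
The map \(F\) is smooth, and its differential in the \(x\)-direction is the identity, so \(F\) is a submersion onto \(M_j(\C)_{\mathrm{sa}}\cong\R^{j^2}\). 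The regular value theorem then shows \(\Sigma^{=j}\) is an embedded submanifold of codimension \(j^2\) near \(a_0\). This Schur-complement chart is the main step. It gives embeddedness directly, with no appeal to orbit or Grassmannian arguments, and the dimension count \(j^2\) falls out of it. As a cross-check, the fibration gives
\[
\dim\Sigma^{=j}=2j(r-j)+(r-j)^2=r^2-j^2 .
\]
Disconnectedness arises from the signature of \(z\): there are \(r-j+1\) possible inertia types.

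For the second claim, \(\Sigma_{r,k,\lambda}\) is the union of the strata \(\Sigma^{=j}_{r,\lambda}\) with \(j\ge k\). It is closed because the condition \(\dim\ker(a-\lambda)\ge k\) is equivalent to \(\operatorname{rank}(a-\lambda)\le r-k\), which is cut out by the vanishing of all \((r-k+1)\)-minors. Rank is lower semicontinuous, so the closure of \(\Sigma^{=j}\) lies in \(\bigcup_{j'\ge j}\Sigma^{=j'}\). This makes the decomposition a stratification by finitely many disjoint smooth pieces satisfying the frontier condition. The strata have dimensions \(r^2-j^2\), which are maximized at \(j=k\), giving largest stratum dimension \(r^2-k^2\) and stratified codimension \(k^2\).

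The only delicate point is verifying the submersion claim cleanly. We restrict to a neighborhood where \(z\) stays invertible, and check that \(dF\) restricted to the \(x\)-directions is the identity on \(M_j(\C)_{\mathrm{sa}}\). Everything else is routine bookkeeping.
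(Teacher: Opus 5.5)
Your proof is correct, but it gets smoothness and codimension by a different mechanism than the paper.

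The paper argues globally. An element of \(\Sigma^{=j}_{r,0}\) is determined by its kernel \(K\in\operatorname{Gr}(j,r)\) and an invertible self-adjoint operator on \(K^\perp\). So the locus is the total space of an open subbundle of the self-adjoint endomorphism bundle of the orthogonal-complement bundle over the Grassmannian. The codimension is then read off from \(2j(r-j)+(r-j)^2=r^2-j^2\), which is the count you use only as a cross-check. That description is coordinate-free and makes unitary invariance transparent, which suits the paper's later fiberwise use of these loci inside \(\operatorname{End}(E)_{\mathrm{sa}}\). However, it asserts rather than verifies that this bundle structure is an embedded submanifold structure in \(M_r(\C)_{\mathrm{sa}}\); one would need to check, for instance, that \(a\mapsto\ker a\) is smooth on the locus.

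Your Schur-complement chart settles exactly that point. Near \(a_0\), the locus is the regular zero set of the submersion \(F(a)=x-yz^{-1}y^*\) onto \(M_j(\C)_{\mathrm{sa}}\cong\R^{j^2}\). Equivalently, it is the graph \(x=yz^{-1}y^*\) over the \((y,z)\)-coordinates, so embeddedness and codimension \(j^2\) come out together. Your justification of closedness via minors is also more explicit than the paper's, which simply says the final assertions follow by taking the union.

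One small overstatement: lower semicontinuity of rank gives only \(\overline{\Sigma^{=j}}\subseteq\bigcup_{j'\geq j}\Sigma^{=j'}\). The frontier condition, as usually stated, also needs the reverse inclusion. That follows by perturbing \(j'-j\) zero eigenvalues to small nonzero values, though the statement itself does not require it.
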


\begin{proof}
Translation by \(\lambda1_r\) reduces the assertion to \(\lambda=0\).
An element of \(\Sigma^{=j}_{r,0}\) is determined by its kernel
\(K\in\operatorname{Gr}(j,r)\) and an invertible self-adjoint operator
on \(K^\perp\).  Thus \(\Sigma^{=j}_{r,0}\) is the total space of the
open bundle of invertible self-adjoint endomorphisms of the orthogonal
complement bundle over \(\operatorname{Gr}(j,r)\).  It is therefore a
smooth embedded submanifold.  Its real dimension is
\[
        2j(r-j)+(r-j)^2=r^2-j^2,
\]
so its real codimension is \(j^2\).  The final assertions follow by
taking the union over \(j\geq k\); the stratum of largest dimension is
the one with \(j=k\).
\end{proof}

\subsection{Bundle-valued transversality}

We use a transversality argument below applied to a real vector bundle,
even though the bundle \(E\) appearing in the proposition is complex.
This is because a rank-\(r\) complex vector bundle \(E\to X\) equipped
with a continuous Hermitian metric has the property that its self-adjoint endomorphisms form a locally trivial real vector bundle with fiber \(M_r(\C)_{\mathrm{sa}}\) (a real vector
space of dimension \(r^2\)). Thus a self-adjoint element of
\(\Gamma(\operatorname{End}(E))\) is exactly a continuous section of
\(\operatorname{End}(E)_{\mathrm{sa}}\).

For \(\lambda\in\R\) and \(1\leq j\leq r\), let
\[
        D_{\lambda,j}(x)
        =
        \{T\in\operatorname{End}(E_x)_{\mathrm{sa}}:
        \dim\ker(T-\lambda1_{E_x})\geq j\},
\]
and put \(D_{\lambda,r+1}(x)=\emptyset\). Unitary conjugation preserves
these loci, so they form closed fiberwise subsets of
\(\operatorname{End}(E)_{\mathrm{sa}}\). The exact-nullity locus
\[
        S_{\lambda,j}(x)
        =
        D_{\lambda,j}(x)\setminus D_{\lambda,j+1}(x)
\]
is a smooth embedded submanifold of the fiber. After
\(D_{\lambda,j+1}\) has been deleted, these loci form a closed locally
trivial subfibration of the resulting open locally trivial
subfibration. Its fiber has real codimension \(j^2\).

A smooth embedded submanifold of codimension \(j^2\) has Lipschitz
codimension \(j^2\) in the sense of Jacob. Consequently, when
\(j^2>d\), every open subset \(U\subseteq X\) is normal and satisfies
\(\dim U\leq d<j^2\), so
\cite[Theorem~3.4]{JacobDimAvoidance} makes the exact-nullity locus
\(U\)-avoidable. The bundle-valued conclusion, including its relative
form over a closed subset, is then supplied by
\cite[Theorem~5.1]{JacobDimAvoidance}. We apply these results one
exact-nullity stratum at a time.

\begin{prop}
\label{prop:compact-bundle-rank-avoidance}
Let \(X\) be a compact metrizable space with finite covering dimension \(d\), and let \(E\to X\) be a complex vector bundle of constant rank \(r\). Let \(\Lambda\subset(-1,1)\) be finite. If \(1\leq k\leq r\) and \(d<k^2\), then for every \(f\in\Gamma(\operatorname{End}(E))_{1,\mathrm{sa}}\) and every \(\varepsilon>0\), there exists \(g\in\Gamma(\operatorname{End}(E))_{1,\mathrm{sa}}\) such that \(\|f-g\|<\varepsilon\) and \(\dim\ker(g(x)-\lambda 1_{E_x})<k\) for every \(x\in X\) and every \(\lambda\in\Lambda\).
\end{prop}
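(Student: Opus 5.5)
We plan an iterative perturbation, with each step of the iteration handled by Jacob's bundle-valued avoidance theorem \cite[Theorem~5.1]{JacobDimAvoidance}, applied as described in the paragraphs before the statement. The first move is to give ourselves room. Replace \(f\) by \((1-\varepsilon/4)f\). This moves \(f\) by less than \(\varepsilon/4\) in norm and places it in the open set of sections with norm strictly less than \(1\). Hence any later perturbation of size below \(\varepsilon/4\) is still a self-adjoint contraction.

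Next we order the work. Enumerate \(\Lambda=\{\lambda_1,\ldots,\lambda_m\}\). For each \(\lambda_i\) we delete the strata \(S_{\lambda_i,j}\) in decreasing order of \(j\), running from \(j=r\) down to \(j=k\). This gives a finite list of steps, and we spend error at most \(\varepsilon/(4N)\) on each step, where \(N=m(r-k+1)\). We also keep a record of which loci have already been avoided. At a given step, let \(W\subseteq \operatorname{End}(E)_{\mathrm{sa}}\) be the open fiber subbundle consisting of the operators of norm less than \(1\) that avoid every locus treated so far: the sets \(D_{\lambda_{i'},k}\) for \(i'<i\), and \(D_{\lambda_i,j+1}\) for the current \(i\).

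The invariant we maintain is that the current section \(g\) takes values in \(W\). Since \(X\) is compact, the distance from \(g(X)\) to the closed complement of \(W\) is positive, say \(\gamma>0\). So any perturbation smaller than \(\gamma\) stays in \(W\). Inside \(W\), the set \(S_{\lambda_i,j}\cap W\) is a closed, locally trivial, smooth subfibration whose fiber has codimension \(j^2\geq k^2>d\). We now apply \cite[Theorem~3.4 and Theorem~5.1]{JacobDimAvoidance} to the bundle \(W\) over \(X\), whose open subsets have dimension at most \(d\). This yields a section \(g'\) within \(\min\{\gamma,\varepsilon/(4N)\}\) of \(g\) that takes values in \(W\setminus S_{\lambda_i,j}\).

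Note that \(W\setminus S_{\lambda_i,j}\) is precisely the next step's \(W\), or, once \(j=k\), the set \(W\) with \(D_{\lambda_i,k}\) removed. After all \(N\) steps the section \(g\) avoids \(D_{\lambda,k}\) for every \(\lambda\in\Lambda\), takes values of norm less than \(1\), and satisfies \(\|f-g\|<\varepsilon/4+\varepsilon/4<\varepsilon\).

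The main obstacle is checking the hypotheses of Jacob's theorem at each step. The fiber \(W_x\) is an open subset of the Euclidean space \(\operatorname{End}(E_x)_{\mathrm{sa}}\), and we must confirm three things about the pair \((W,S_{\lambda_i,j}\cap W)\).
\begin{enumerate}[(1)]
\item It is a locally trivial pair. Over a trivializing open set, a unitary frame identifies every fiber with the same model pair in \(M_r(\C)_{\mathrm{sa}}\), because all of these loci, and the norm condition, are invariant under unitary conjugation.
\item The stratum is closed in \(W\). This holds because \(D_{\lambda_i,j}\) is closed and the higher-nullity part \(D_{\lambda_i,j+1}\) has already been removed from \(W\).
\item The stratum has Lipschitz codimension \(j^2\). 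This follows from Lemma~\ref{lem:rank-defect-codim}, since a smooth embedded closed submanifold of codimension \(j^2\) has Lipschitz codimension \(j^2\).
\end{enumerate}
If Theorem~5.1 requires the ambient fiber to be a full vector space rather than an open subset, the fallback is to apply it to all of \(\operatorname{End}(E)_{\mathrm{sa}}\), with the closed set \(\overline{S_{\lambda_i,j}}\) localized away from the complement of \(W\). The \(\gamma\)-smallness of the perturbation then keeps the new section inside \(W\) anyway.
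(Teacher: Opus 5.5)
Your proposal is correct and is essentially the paper's proof. It uses the same ordering of strata (by increasing index of $\lambda$, then by decreasing nullity $j$ from $r$ down to $k$), the same open subfibrations avoiding the previously treated loci, and the same appeal to Jacob's Theorems~3.4 and~5.1 with the tolerance split over $N=|\Lambda|(r-k+1)$ steps. The one difference is cosmetic: you secure contractivity up front by scaling $f$ by $1-\varepsilon/4$ and building the norm bound into $W$, whereas the paper perturbs without a norm constraint and clips with $\chi(t)=\max\{-1,\min\{t,1\}\}$ at the end, using $\Lambda\subset(-1,1)$ to see that the kernels at the levels $\lambda$ are unchanged.
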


\begin{proof}
If \(\Lambda=\emptyset\), take \(g=f\). Otherwise, enumerate
\(\Lambda=\{\lambda_1,\ldots,\lambda_L\}\),
let \(D_{\lambda,j}\) and \(S_{\lambda,j}\) denote the fiberwise loci
introduced above, and put
\(N=L(r-k+1)\).
We treat the pairs
\[
        (\ell,j),
        \qquad
        1\leq\ell\leq L,\quad r\geq j\geq k,
\]
first by increasing \(\ell\), and, for each fixed \(\ell\), by
decreasing \(j\).

Suppose all earlier strata have been avoided and the current pair is
\((\ell,j)\). The current section takes values in the open locally
trivial subfibration whose fiber is
\[
        V_{\ell,j}
        =
        M_r(\C)_{\mathrm{sa}}
        \setminus
        \left(
        \bigcup_{h<\ell}D_{\lambda_h,k}
        \cup D_{\lambda_\ell,j+1}
        \right).
\]
Inside \(V_{\ell,j}\), the set
\(S_{\lambda_\ell,j}\cap V_{\ell,j}\)
is closed, and the corresponding fiberwise sets form a closed locally
trivial subfibration. Its fiber is a smooth embedded submanifold of
real, and hence Lipschitz, codimension
\[
        j^2\geq k^2>d
\]
by Lemma~\ref{lem:rank-defect-codim}.

Every open subset \(U\subseteq X\) is metrizable and normal, with
\(\dim U\leq d\). Thus
\cite[Theorem~3.4]{JacobDimAvoidance} shows that the current bad
subfibration is \(U\)-avoidable for every such \(U\). Since \(X\) is
compact metrizable, it is paracompact, and
\cite[Theorem~5.1]{JacobDimAvoidance}, applied with tolerance
\(\varepsilon/(2N)\), perturbs the section within the open
subfibration so as to avoid the current stratum. Because the
perturbation remains in \(V_{\ell,j}\), all previously obtained
avoidance properties are preserved.

After the \(N\) steps, we obtain a self-adjoint section \(g_0\) such
that
\(\|f-g_0\|<\varepsilon/2\)
and
\[
        \dim\ker(g_0(x)-\lambda1_{E_x})<k,
        \qquad
        x\in X,\ \lambda\in\Lambda.
\]
Let
\[
        \chi(t)=\max\{-1,\min\{t,1\}\},
        \qquad t\in\R,
\]
and put \(g=\chi(g_0)\). Since \(f\) is a contraction and
\(\|f-g_0\|<\varepsilon/2\),
\[
        \|g_0-\chi(g_0)\|<\varepsilon/2,
\]
so \(\|f-g\|<\varepsilon\). Finally, for
\(\lambda\in(-1,1)\), one has \(\chi(t)=\lambda\) if and only if
\(t=\lambda\). Hence
\[
        \ker(g(x)-\lambda1_{E_x})
        =
        \ker(g_0(x)-\lambda1_{E_x}),
\]
and the kernel estimates are unchanged.
\end{proof}

\begin{rmk}
The restriction \(\Lambda\subset(-1,1)\) is harmless for the applications to real rank zero, where \(\Lambda\) is a finite set of interior cut points for a partition of \([-1,1]\). It allows the final clipping step in the proof to preserve the avoided spectral levels.
\end{rmk}

\begin{cor}
\label{cor:compact-tracial-rank-avoidance}
Let \(X\), \(E\), \(r\), \(d\), \(k\), and \(\Lambda\) be as in Proposition~\ref{prop:compact-bundle-rank-avoidance}, and put \(B=\Gamma(\operatorname{End}(E))\). Let \(T\subseteq T(B)\) be compact. If \(d<k^2\), then for every \(f\in B_{1,\mathrm{sa}}\) and every \(\varepsilon>0\), there are \(g\in B_{1,\mathrm{sa}}\) and \(\delta>0\) such that \(\|f-g\|<\varepsilon\) and
\[
        \tau\big(\eta_\delta(g-\lambda)\big)<\frac{k}{r},
        \qquad \tau\in T,\ \lambda\in\Lambda.
\]
\end{cor}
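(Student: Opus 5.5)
Apply Proposition~\ref{prop:compact-bundle-rank-avoidance} to obtain \(g\in B_{1,\mathrm{sa}}\) with \(\|f-g\|<\varepsilon\) and
\[
\dim\ker(g(x)-\lambda 1_{E_x})\leq k-1,\qquad x\in X,\ \lambda\in\Lambda.
\]
It then remains to choose \(\delta>0\) so that the spectral mass of \(g(x)\) near each \(\lambda\) is controlled uniformly in \(x\). The natural target is
\[
\operatorname{tr}_x\big(\eta_\delta(g(x)-\lambda)\big)\leq \frac{k-1}{r}+\theta
\]
for all \(x\) and \(\lambda\), with \(\theta<1/r\). Here \(\operatorname{tr}_x\) is the normalized trace on \(\operatorname{End}(E_x)\cong M_r\).

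\emph{Step 1: trace representation.} Every tracial state on \(B=\Gamma(\operatorname{End}(E))\) has the form \(\tau(b)=\int_X \operatorname{tr}_x(b(x))\,d\mu(x)\) for a Borel probability measure \(\mu\) on \(X\). One sees this by noting that \(B\) is \(r\)-homogeneous with center \(C(X)\), and a trace is determined by its restriction to the center via the center-valued trace \(b\mapsto \operatorname{tr}_x(b(x))\). Consequently a uniform pointwise bound on \(\operatorname{tr}_x(\eta_\delta(g(x)-\lambda))\) bounds \(\tau(\eta_\delta(g-\lambda))\) for every \(\tau\in T(B)\). Compactness of \(T\) is therefore not needed beyond this; it suffices to work with all of \(T(B)\).

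\emph{Step 2: uniform spectral gap (main obstacle).} For \(x\in X\) and \(\lambda\in\Lambda\), let \(N_\delta(x,\lambda)\) be the number of eigenvalues of \(g(x)\), counted with multiplicity, in \([\lambda-\delta,\lambda+\delta]\). We claim there is \(\delta>0\) with \(N_\delta(x,\lambda)\leq k-1\) for all \(x\) and \(\lambda\).

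Suppose not. Then there are \(x_n\in X\) and \(\lambda\in\Lambda\) (a single \(\lambda\) along a subsequence, since \(\Lambda\) is finite) with \(N_{1/n}(x_n,\lambda)\geq k\). By compactness, pass to a subsequence with \(x_n\to x\).

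Continuity of eigenvalues requires care here, since \(g\) is a section of a bundle rather than a matrix-valued function. Choose a local trivialization near \(x\), so that \(g\) becomes a continuous map into \(M_r(\C)_{\mathrm{sa}}\) with a unitarily equivalent spectrum in each fiber. Ordered eigenvalues are \(1\)-Lipschitz in operator norm by Weyl's inequality, so the condition \(N_{1/n}(x_n,\lambda)\geq k\) passes to the limit and gives \(\dim\ker(g(x)-\lambda)\geq k\). This contradicts the choice of \(g\).

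\emph{Step 3: conclusion.} Since \(0\leq\eta_\delta\leq 1\) and \(\eta_\delta\) is supported in \([-\delta,\delta]\), we get
\[
\operatorname{tr}_x\big(\eta_\delta(g(x)-\lambda)\big)\leq \frac{N_\delta(x,\lambda)}{r}\leq\frac{k-1}{r}<\frac{k}{r}.
\]
Integrating against \(\mu\) gives \(\tau(\eta_\delta(g-\lambda))\leq (k-1)/r<k/r\) for all \(\tau\in T\) and \(\lambda\in\Lambda\), which is the strict inequality required.
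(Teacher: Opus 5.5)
Your proposal is correct and is essentially the paper's argument. You apply Proposition~\ref{prop:compact-bundle-rank-avoidance}, obtain a uniform $\delta$ from compactness of $X$ together with Weyl-type continuity of ordered eigenvalues in a unitary local trivialization, and then integrate the fiberwise bound $(k-1)/r$ against the measure representing each trace; the paper phrases the uniform-gap step as positivity and continuity of the $k$-th eigenvalue of $|g(x)-\lambda1_{E_x}|$, whereas your sequential contradiction argument is equivalent and makes explicit the choice of $\delta$ that the paper leaves implicit.
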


\begin{proof}
Apply Proposition~\ref{prop:compact-bundle-rank-avoidance} to obtain
\(g\) with \(\|f-g\|<\varepsilon\) and
\(\dim\ker(g(x)-\lambda1_{E_x})<k\) for all \(x\in X\) and
\(\lambda\in\Lambda\).  If \(\Lambda=\emptyset\), any \(\delta>0\)
finishes the proof.  For \(\lambda\in\Lambda\), let
\(\alpha_\lambda(x)\) be the \(k\)-th eigenvalue of
\(|g(x)-\lambda1_{E_x}|\)
listed in nondecreasing order.  This number is strictly positive.
Moreover, \(\alpha_\lambda\) is continuous.  Indeed, over a unitary
trivialization of \(E\), the displayed field is norm-continuous in
\(M_r(\C)_+\), and the min--max principle gives
\[
        |s_j(A)-s_j(B)|\leq\|A-B\|,
        \qquad A,B\in M_r(\C)_+,\ 1\leq j\leq r,
\]
for the ordered eigenvalues.  The resulting local functions agree on
overlaps because ordered eigenvalues are invariant under unitary
conjugation.

For \(\tau\in T(B)\), the restriction of \(\tau\) to the central copy
of \(C(X)\) is integration against a Borel probability measure
\(\mu_\tau\). Since each fiber \(M_r\) has a unique normalized trace,
local trivializations and a partition of unity give
\[
        \tau(h)
        =
        \int_X\tr_r(h(x))\,d\mu_\tau(x),
        \qquad h\in B.
\]
The preceding pointwise estimate therefore gives
\[
        \tau\big(\eta_\delta(g-\lambda)\big)
        \leq\frac{k-1}{r}
        <\frac{k}{r},
        \qquad
        \tau\in T(B),\ \lambda\in\Lambda.
\]
In particular, the estimate holds for every \(\tau\in T\).
\end{proof}

\section{Local tracial subquadratic approximation}
\label{sec:local}

\subsection{Real rank zero of the uniform tracial ultrapower}
Here we relax Vaccaro's notion of locally flat dimension growth to a more natural subquadratic condition, and prove that it suffices for uniform property \(\Gamma\) via real rank zero for the tracial ultrapower.

\begin{defn}
\label{def:unital-local-subquadratic}
Let \(A\) be a unital \(C^*\)-algebra with \(T(A)\neq\emptyset\). We say that \(A\) has \emph{unital locally tracially subquadratic homogeneous approximation} if, for every finite subset \(\mathcal F\subset A_1\), every \(\varepsilon>0\), every \(\eta>0\), and every \(R\in\N\), there is a unital subalgebra \(C\subseteq A\), with \(1_C=1_A\), of the form
\[
        C\cong\bigoplus_{i=1}^m p_iM_{N_i}(C(X_i))p_i,
\]
where each \(X_i\) is compact metrizable of finite covering dimension and each \(p_i\) has constant rank \(r_i\), such that every element of \(\mathcal F\) is within \(\varepsilon\) in \(\|\cdot\|_{2,T(A)}\) of an element of \(C_1\), while \(r_i\geq R\) for all \(i\) and \(\max_i\dim(X_i)/r_i^2<\eta\).
\end{defn}

\begin{thm}
\label{thm:local-rr0}
Let \(A\) be a unital \(C^*\)-algebra with \(T(A)\neq\emptyset\). If \(A\) has unital locally tracially subquadratic homogeneous approximation, then \(A^{\U}\) has real rank zero.
\end{thm}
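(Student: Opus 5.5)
By Proposition~\ref{prop:S-criterion}, it suffices to verify property \((FG)\) for \(A\). So fix a sequence \(a_n\in A_{1,\mathrm{sa}}\) and a finite set \(\Lambda\subset(-1,1)\). For each \(n\) we will produce \(b_n\in A_{1,\mathrm{sa}}\) and \(\delta_n>0\) with
\[
\|a_n-b_n\|_{2,T(A)}<1/n
\qquad\text{and}\qquad
\sup_{\tau\in T(A)}\tau\big(\eta_{\delta_n}(b_n-\lambda)\big)<1/n
\quad\text{for } \lambda\in\Lambda .
\]
Letting \(n\to\infty\) then gives the required limits along \(\U\).

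\textbf{Step 1: approximate in a homogeneous subalgebra.} Fix \(n\) and pick \(\eta>0\) with \(2\sqrt{\eta}<1/(2n)\), together with \(R\) large (say \(2/R<1/(2n)\)). Apply Definition~\ref{def:unital-local-subquadratic} to \(\mathcal F=\{a_n\}\) and tolerance \(1/(2n)\). This gives a unital subalgebra \(C\cong\bigoplus_i p_iM_{N_i}(C(X_i))p_i\) with \(r_i\geq R\) and \(d_i=\dim X_i<\eta r_i^2\), and an element \(c\in C_1\) with \(\|a_n-c\|_{2,T(A)}<1/(2n)\).

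We may take \(c\) self-adjoint. Replacing \(c\) by its real part does not increase the distance, since \(a_n\) is self-adjoint and \(\|\cdot\|_{2,T(A)}\) is invariant under \(x\mapsto x^*\). Each corner \(p_iM_{N_i}(C(X_i))p_i\) is \(\Gamma(\operatorname{End}(E_i))\) for the rank-\(r_i\) complex bundle \(E_i\) determined by \(p_i\).

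\textbf{Step 2: rank avoidance on each summand.} In summand \(i\), set \(k_i=\lfloor\sqrt{d_i}\rfloor+1\). Then \(d_i<k_i^2\), and \(k_i\leq r_i\) once \(\eta<1\) and \(R\geq 2\). Also
\[
\frac{k_i}{r_i}\leq\frac{\sqrt{\eta}\,r_i+1}{r_i}\leq\sqrt\eta+\frac1R .
\]
Apply Proposition~\ref{prop:compact-bundle-rank-avoidance} and the pointwise estimate in Corollary~\ref{cor:compact-tracial-rank-avoidance} to the \(i\)-th component of \(c\), with norm tolerance \(1/(2n)\). This yields \(g_i\) and \(\delta^{(i)}>0\) such that
\[
\tr_{r_i}\big(\eta_{\delta^{(i)}}(g_i(x)-\lambda)\big)\leq\frac{k_i-1}{r_i}
\qquad\text{for all } x\in X_i,\ \lambda\in\Lambda .
\]
Put \(b_n=\bigoplus_i g_i\in C_{1,\mathrm{sa}}\subseteq A_{1,\mathrm{sa}}\) and \(\delta_n=\min_i\delta^{(i)}\). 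Since \(\eta_\delta\) is monotone in \(\delta\) pointwise, the bound persists with \(\delta_n\). Moreover \(\|a_n-b_n\|_{2,T(A)}\leq 1/(2n)+\|c-b_n\|<1/n\).

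\textbf{Step 3: transfer the trace bound to \(T(A)\).} This is the main point to check. Every \(\tau\in T(A)\) restricts to a positive tracial functional on \(C\), and since \(1_C=1_A\) it is a tracial state. Tracial states on \(\bigoplus_i\Gamma(\operatorname{End}(E_i))\) are convex combinations \(\sum_i t_i\tau_i\), where each \(\tau_i\) is of the form \(\int_{X_i}\tr_{r_i}(\cdot)\,d\mu\), as in the proof of Corollary~\ref{cor:compact-tracial-rank-avoidance}. Functional calculus of \(b_n\) computed in \(C\) agrees with that computed in \(A\). Therefore
\[
\tau\big(\eta_{\delta_n}(b_n-\lambda)\big)\leq\max_i\frac{k_i}{r_i}\leq\sqrt\eta+\frac1R<\frac1n,
\]
uniformly in \(\tau\in T(A)\) and \(\lambda\in\Lambda\). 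This is exactly where unitality of \(C\) is used: no trace mass of \(A\) can escape outside \(C\). This completes the verification of property \((FG)\), and Proposition~\ref{prop:S-criterion} then gives that \(A^{\U}\) has real rank zero.
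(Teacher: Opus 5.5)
Your proposal is correct and follows the paper's proof essentially step for step. You verify property \((FG)\) by approximating in a unital homogeneous subalgebra, then apply the compact-bundle rank avoidance summand by summand with \(k\) of order \(\sqrt{\dim X_i}\): you take \(\lfloor\sqrt{d_i}\rfloor+1\) where the paper takes \(\lceil\theta_n r_{n,i}/2\rceil\). Finally you use \(1_C=1_A\) to split each \(\tau|_C\) into weighted summand traces, exactly as the paper does. The only cosmetic difference is that you integrate the fiberwise rank bound directly, instead of passing through the compact sets \(T_{n,i}\) of normalized restricted traces.
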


\begin{proof}
We verify property \((FG)\) from Definition~\ref{def:property-S} and then apply Proposition~\ref{prop:S-criterion}. Let \(a_n\in A_{1,\mathrm{sa}}\) be any sequence, and let \(\Lambda\subset(-1,1)\) be finite.

Choose numbers \(\theta_n\in(0,1/2)\) with \(\theta_n\to0\). For each \(n\), put \(\eta_n=\theta_n^2/4\), and choose \(R_n\in\N\) with \(1/R_n<\theta_n/4\). Apply Definition~\ref{def:unital-local-subquadratic} to \(\{a_n\}\) with \(\varepsilon = 1/n\), \(\eta = \eta_n\), and \(R = R_n\). This yields a unital subalgebra \(C_n\subseteq A\) of the form
\[
        C_n=\bigoplus_{i=1}^{m_n}B_{n,i},
        \qquad
        B_{n,i}=p_{n,i}M_{N_{n,i}}(C(X_{n,i}))p_{n,i},
\]
together with \(d_n\in(C_n)_1\) such that \( \|a_n-d_n\|_{2,T(A)}<1/n \).
Put \(c_n=(d_n+d_n^*)/2\).
Then \(c_n\in(C_n)_{1,\mathrm{sa}}\) and
\[
        \|a_n-c_n\|_{2,T(A)}
        \leq \|a_n-d_n\|_{2,T(A)}<1/n.
\]
Write \(c_n=c_{n,1}\oplus\cdots\oplus c_{n,m_n}\),
and let \(r_{n,i}=\rank(p_{n,i})\). We have
\[
        r_{n,i}\geq R_n
        \quad\text{and}\quad
        \frac{\dim(X_{n,i})}{r_{n,i}^2}<\eta_n.
\]

Set \(k_{n,i}=\lceil \theta_n r_{n,i}/2\rceil\). Then \(k_{n,i}/r_{n,i}<\theta_n\) because \(1/r_{n,i}<\theta_n/4\), and \(\dim(X_{n,i})<k_{n,i}^2\). Let \(e_{n,i}\) be the unit of \(B_{n,i}\) and define
\[
        T_{n,i}=\overline{\left\{\frac{\tau|_{B_{n,i}}}{\tau(e_{n,i})}:\tau\in T(A),\ \tau(e_{n,i})>0\right\}}\subseteq T(B_{n,i}).
\]
The set \(T_{n,i}\) is compact, possibly empty, because it is a closed
subset of the compact space \(T(B_{n,i})\). If \(T_{n,i}=\emptyset\),
the trace estimate below is vacuous. Applying
Corollary~\ref{cor:compact-tracial-rank-avoidance} to \(B_{n,i}\), the
compact trace set \(T_{n,i}\), the element \(c_{n,i}\), the set
\(\Lambda\), and the integer \(k_{n,i}\), we obtain \(b_{n,i}\in(B_{n,i})_{1,\mathrm{sa}}\) and \(\delta_{n,i}>0\) such that \(\|b_{n,i}-c_{n,i}\|<1/n\) and
\[
        \rho\big(\eta_{\delta_{n,i}}(b_{n,i}-\lambda)\big)<\frac{k_{n,i}}{r_{n,i}}<\theta_n,
        \qquad \rho\in T_{n,i},\ \lambda\in\Lambda.
\]
Put \(b_n=b_{n,1}\oplus\cdots\oplus b_{n,m_n}\) and \(\delta_n=\min_i\delta_{n,i}\). Then \(\|a_n-b_n\|_{2,T(A)}<2/n\), so \((a_n-b_n)_n\in c_{\U}\).

Fix \(\lambda\in\Lambda\) and \(\tau\in T(A)\). If \(\tau(e_{n,i})>0\), then the normalized restriction \(\tau|_{B_{n,i}}/\tau(e_{n,i})\) belongs to \(T_{n,i}\). If instead \(\tau(e_{n,i})=0\), then the corresponding summand contributes nothing. Since \(\eta_{\delta_n}\leq \eta_{\delta_{n,i}}\), we get
\[
        \tau\big(\eta_{\delta_n}(b_n-\lambda)\big)
        \leq\sum_{i=1}^{m_n}\tau(e_{n,i})\theta_n=\theta_n.
\]
Thus \(\max_{\lambda\in\Lambda}\sup_{\tau\in T(A)}\tau(\eta_{\delta_n}(b_n-\lambda))\leq\theta_n\), which tends to zero along \(\U\). This is property \((FG)\), so Proposition~\ref{prop:S-criterion} gives \(\rr(A^{\U})=0\).
\end{proof}

\subsection{Uniform property \texorpdfstring{\(\Gamma\)}{Gamma}}

\begin{lem}
\label{lem:local-tlfnd}
If \(A\) has unital locally tracially subquadratic homogeneous approximation, then \(A\) has tracially locally finite nuclear dimension.
\end{lem}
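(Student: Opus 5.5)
The plan is to verify Definition~\ref{def:tlfnd} directly, taking as the witnessing subalgebra the homogeneous subalgebra supplied by Definition~\ref{def:unital-local-subquadratic}. Fix a finite set \(\mathcal F\subset A_1\) and \(\varepsilon>0\). Apply Definition~\ref{def:unital-local-subquadratic} with this \(\mathcal F\), this \(\varepsilon\), and any auxiliary parameters, say \(\eta=1\) and \(R=1\). This produces a unital subalgebra
\[
        C\cong\bigoplus_{i=1}^m p_iM_{N_i}(C(X_i))p_i
\]
such that every \(a\in\mathcal F\) lies within \(\varepsilon\) in \(\|\cdot\|_{2,T(A)}\) of some \(c\in C_1\). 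This is exactly the approximation clause of Definition~\ref{def:tlfnd}. What remains is to show that \(C\) has finite nuclear dimension.

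For each summand, \(p_iM_{N_i}(C(X_i))p_i\) is a corner of \(M_{N_i}(C(X_i))\cong C(X_i)\otimes M_{N_i}\). Since \(X_i\) is compact metrizable with \(\dim X_i<\infty\), the results of Winter--Zacharias \cite{WinterZacharias:AIM} give
\[
        \dim_{\mathrm{nuc}}\bigl(C(X_i)\otimes M_{N_i}\bigr)=\dim X_i.
\]
Here one uses \(\dim_{\mathrm{nuc}}C(X)=\dim X\) together with stability under tensoring with matrices. Passing to the hereditary subalgebra (corner) \(p_iM_{N_i}(C(X_i))p_i\) does not increase nuclear dimension, again by \cite[Proposition~2.5]{WinterZacharias:AIM} (hereditary subalgebras). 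Finally, the nuclear dimension of a finite direct sum is the maximum of the nuclear dimensions of the summands. Hence
\[
        \dim_{\mathrm{nuc}}C\leq\max_i\dim X_i<\infty.
\]

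Alternatively, one can trivialize the constant-rank projection \(p_i\) locally: \(p_iM_{N_i}(C(X_i))p_i\) is a homogeneous algebra \(\Gamma(\operatorname{End}(E_i))\) with fiber \(M_{r_i}\). A locally trivial \(M_{r_i}\)-bundle over a finite-dimensional base has nuclear dimension at most \(\dim X_i\), by a partition-of-unity argument over a finite trivializing cover. I would cite the hereditary-subalgebra route, since it avoids bundle considerations.

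The only point requiring any care is bookkeeping of the citations for these permanence properties, namely the direct sum, the corner and the matrix amplification. No new estimate is needed. Note in particular that neither the subquadratic bound nor \(R\) plays any role in this lemma; only the finite dimensionality of the \(X_i\) is used.
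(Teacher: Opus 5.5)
Your proposal is correct and is essentially the paper's proof: apply Definition~\ref{def:unital-local-subquadratic} with an arbitrary dimension--rank tolerance and \(R=1\), then use the Winter--Zacharias permanence results (commutative case, matrix amplification, passage to the hereditary corner \(p_iM_{N_i}(C(X_i))p_i\), and finite direct sums) to conclude that the approximating subalgebra has finite nuclear dimension. As you note, the subquadratic bound and the rank bound play no role in this lemma.
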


\begin{proof}
It is enough to consider finite sets of contractions. Given such a set
\(\mathcal F\) and \(\varepsilon>0\), apply
Definition~\ref{def:unital-local-subquadratic} with any positive
dimension--rank tolerance and with rank bound \(R=1\). The resulting
algebra is a finite direct sum of homogeneous algebras
\[
        p_iM_{N_i}(C(X_i))p_i
\]
over finite-dimensional compact spaces.

By \cite[Propositions~2.3--2.5]{WinterZacharias:AIM},
\(C(X_i)\) has finite nuclear dimension, and finite nuclear dimension
is preserved under matrix amplification, passage to hereditary
subalgebras, and finite direct sums. Hence the resulting homogeneous
subalgebra has finite nuclear dimension and approximates
\(\mathcal F\) in \(\|\cdot\|_{2,T(A)}\), as required.
\end{proof}

\begin{thm}
\label{thm:local-gamma}
Let \(A\) be separable, simple, unital, and non-elementary, with
\(T(A)\neq\emptyset\). If \(A\) has unital locally tracially
subquadratic homogeneous approximation, then \(A\) has uniform
property \(\Gamma\).
\end{thm}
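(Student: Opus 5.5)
The plan is to assemble results already established: Theorem~\ref{thm:VW-bridge} requires three hypotheses beyond separability and unitality with \(T(A)\neq\emptyset\). These are: no nonzero finite-dimensional representations, tracially locally finite nuclear dimension, and real rank zero of \(A^{\U}\). I will check each in turn and then invoke the bridge.

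\textbf{Step 1 (real rank zero).} By Theorem~\ref{thm:local-rr0}, unital locally tracially subquadratic homogeneous approximation gives \(\rr(A^{\U})=0\). This needs only \(T(A)\neq\emptyset\) and unitality, both of which are assumed.

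\textbf{Step 2 (tracially locally finite nuclear dimension).} This is exactly Lemma~\ref{lem:local-tlfnd}, applied to the same approximation hypothesis.

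\textbf{Step 3 (no finite-dimensional representations).} Let \(\pi\colon A\to M_k\) be a nonzero finite-dimensional representation. Since \(A\) is simple, \(\ker\pi\) is a proper ideal, hence zero. So \(\pi\) is injective and \(A\) is finite-dimensional. A simple finite-dimensional \(C^*\)-algebra is a full matrix algebra \(M_k\), which is elementary. This contradicts non-elementarity, as the paper already remarks after Theorem~\ref{ASHgamma}.

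\textbf{Conclusion.} With all hypotheses of Theorem~\ref{thm:VW-bridge} in place, \(A\) is uniformly McDuff and hence has uniform property \(\Gamma\).

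The only step needing any care is Step 3, and it is elementary. The substantive work has already been done in Theorem~\ref{thm:local-rr0}, where the rank-avoidance estimates produce property \((FG)\), and in the non-simple version of Vaccaro's bridge. I therefore expect no real obstacle beyond confirming that separability, which Theorem~\ref{thm:VW-bridge} requires, is indeed among the stated hypotheses.
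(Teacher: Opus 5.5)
Your proposal is correct and matches the paper's proof: it combines Theorem~\ref{thm:local-rr0}, Lemma~\ref{lem:local-tlfnd}, and the observation that a simple non-elementary algebra has no nonzero finite-dimensional representations, then invokes Theorem~\ref{thm:VW-bridge}. Your explicit justification of Step 3 (a nonzero representation of a simple algebra is injective, which would force \(A\cong M_k\)) is precisely what the paper's one-line assertion leaves implicit.
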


\begin{proof}
Since \(A\) is simple and non-elementary, it has no nonzero
finite-dimensional representations.  By Theorem~\ref{thm:local-rr0},
the uniform tracial ultrapower \(A^{\U}\) has real rank zero, and by
Lemma~\ref{lem:local-tlfnd}, \(A\) has tracially locally finite nuclear
dimension.  Theorem~\ref{thm:VW-bridge} then gives uniform property
\(\Gamma\).
\end{proof}

\section{The unital AH case}
\label{sec:AH}

\subsection{Strictly subquadratic AH decompositions}

For this section, an AH decomposition means an inductive system \((A_n,\phi_n)\) with unital injective connecting maps and
\[
        A_n=\bigoplus_{i=1}^{m_n}p_{n,i}M_{N_{n,i}}(C(X_{n,i}))p_{n,i},
\]
where each \(X_{n,i}\) is compact metrizable of finite covering dimension and each \(p_{n,i}\) has constant rank. The injectivity hypothesis is harmless since replacing building blocks by their images in the limit only reduces the covering dimension of the underlying spaces.

\begin{defn}
\label{def:AH-subquadratic}
For a unital AH decomposition as above, set
\[
        r_n=\min_i\rank(p_{n,i})
        \quad\text{and}\quad
        q_n=\max_i\frac{\dim(X_{n,i})}{\rank(p_{n,i})^2}.
\]
The decomposition has \emph{subquadratic growth} if
\[
        \liminf_{n\to\infty}q_n=0,
\]
and it has \emph{strictly subquadratic growth} if
\(r_n\rightarrow\infty\) and
\(q_n\rightarrow0\).
A unital AH algebra has subquadratic growth if it admits a unital AH decomposition with subquadratic growth.
\end{defn}

The next result shows that we may as well always work with strictly subquadratic systems and the structure of the proof is probably well known to experts.

\begin{prop}
\label{prop:AH-subquadratic-to-strict}
Let \(A=\varinjlim(A_n,\phi_n)\) be a simple, unital, non-elementary AH algebra, and suppose that the displayed AH decomposition has subquadratic growth in the sense of Definition~\ref{def:AH-subquadratic}.  Then some telescoping of this decomposition has strictly subquadratic growth.
\end{prop}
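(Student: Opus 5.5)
The plan is to use simplicity and non-elementarity to show that the minimal ranks \(r_n\) already tend to infinity along the \emph{whole} given sequence. Subquadratic growth then lets us telescope along a subsequence on which \(q_n\to0\). Telescoping leaves each building block \(A_{n_l}\) unchanged, so the numbers \(r_{n_l}\) and \(q_{n_l}\) of the telescoped system are exactly the original ones. It therefore suffices to prove \(r_n\to\infty\) and then pick \(n_1<n_2<\cdots\) with \(q_{n_l}\to0\), which is possible because \(\liminf q_n=0\).

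To prove \(r_n\to\infty\), fix \(N\in\N\).

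\emph{Step 1 (orthogonal positives).} Since \(A\) is simple, unital and non-elementary, it is infinite-dimensional. Hence it contains \(N\) pairwise orthogonal nonzero positive contractions \(e_1,\ldots,e_N\). One way to get them: a self-adjoint element with infinite spectrum, and functional calculus with disjointly supported bumps.

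\emph{Step 2 (pull back to a finite stage).} By density of \(\bigcup_n\phi_{n,\infty}(A_n)\), approximate the \(e_k\) by positive elements of some \(\phi_{n,\infty}(A_n)\). Then use the standard stability of orthogonality for positive elements: apply functional calculus \(f_\epsilon\), cutting below a small threshold, to suitable self-adjoint approximants in a finite-dimensional-in-\(N\) argument, or invoke projectivity of \(C_0((0,1])^{\oplus N}\)-type relations. This produces pairwise orthogonal nonzero positive elements \(b_1,\ldots,b_N\in A_n\). By injectivity of the connecting maps, \(\phi_{n,\infty}(b_k)\neq0\).

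\emph{Step 3 (fullness at a later stage).} This is the step I expect to be the main technical point. For each \(k\), simplicity of \(A\) gives finitely many \(x_{k,s},y_{k,s}\in A\) with
\[
\Bigl\|1-\sum_s x_{k,s}\,\phi_{n,\infty}(b_k)\,y_{k,s}\Bigr\|<1/2 .
\]
Approximate the \(x_{k,s},y_{k,s}\) from some stage \(m\geq n\) and use injectivity, so that norms are computed at stage \(m\). This gives an element of the ideal of \(A_m\) generated by \(\phi_{n,m}(b_k)\) lying within distance \(<1\) of \(1_{A_m}\). That element is invertible, so \(\phi_{n,m}(b_k)\) is full in \(A_m\). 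Choosing \(m\) to work for all \(k\) simultaneously, every \(\phi_{n,m}(b_k)\) is nonzero at every point \(x\in X_{m,i}\), for every \(i\). Indeed, an element vanishing at a point lies in the proper ideal of sections vanishing there.

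\emph{Step 4 (rank bound).} At each point \(x\in X_{m,i}\), the elements \(\phi_{n,m}(b_1)(x),\ldots,\phi_{n,m}(b_N)(x)\) are pairwise orthogonal nonzero positive matrices in \(M_{\rank(p_{m,i})}\). Their supports are therefore mutually orthogonal nonzero subspaces, which forces \(\rank(p_{m,i})\geq N\). The same argument applies at every later stage \(m'\geq m\), since fullness passes forward under the unital maps \(\phi_{m,m'}\). Thus \(r_{m'}\geq N\) for all \(m'\geq m\), and since \(N\) was arbitrary, \(r_n\to\infty\).

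Finally, telescope along the subsequence with \(q_{n_l}\to0\). The resulting system has unital injective connecting maps \(\phi_{n_l,n_{l+1}}\), with \(r_{n_l}\to\infty\) and \(q_{n_l}\to0\); that is, it has strictly subquadratic growth.
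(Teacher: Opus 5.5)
Your proof is correct, but it reaches \(r_n\to\infty\) by a genuinely different route from the paper; the final telescoping step is the same.

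\textbf{The paper's route.} The paper first notes that \(r_n\) is nondecreasing: an irreducible representation of \(A_{n+1}\) composed with the unital map \(\phi_n\) decomposes into irreducible representations of \(A_n\). It then argues by contradiction. If \(r_n\leq R\) for all \(n\), every \(A_n\) has a unital representation into \(M_D\) with \(D=R!\), and the spaces of such representations are compact. A finite-intersection argument in their product yields a compatible family, hence a unital representation \(A\to M_D\). Simplicity makes it injective, contradicting non-elementarity.

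\textbf{Your route.} You argue directly. \(N\) orthogonal nonzero positive elements are pulled back to some \(A_n\), and simplicity makes their images full at a later stage \(m\). Fullness forces nonvanishing at every point of every \(X_{m,i}\). Orthogonality of the supports then gives \(\rank(p_{m',i})\geq N\) for all \(m'\geq m\).

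\textbf{Comparison.} The paper's argument is shorter and involves no perturbation of relations. It also really only uses that \(A\) has no nonzero finite-dimensional representations, so it carries over verbatim to non-simple limits. Yours is constructive and quantitative: it exhibits a stage after which \(r_{m'}\geq N\), and it replaces monotonicity of \(r_n\) by forward propagation of fullness under the unital maps \(\phi_{m,m'}\). The cost is that it uses simplicity essentially. Without fullness an element may vanish at some points of the base spaces, and the fiberwise count breaks down.

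\textbf{One imprecision in Step 2.} Cutting almost orthogonal positive approximants with \(f_\epsilon\) does not by itself give exact orthogonality; two nearly orthogonal rank-one projections are fixed, up to scaling, by such cutoffs. Your appeal to projectivity of the cone over \(\C^N\) is valid. The cleanest fix, though, is the one your Step 1 already sets up. Approximate the single self-adjoint element \(h\) with infinite spectrum by a self-adjoint element \(h'\) of some \(A_n\), and apply the disjointly supported bumps to \(h'\) there. The resulting elements are exactly orthogonal. They are nonzero once \(\|h-h'\|\) is smaller than the bump widths, because \(\sigma(h')\) then meets each interval on which the corresponding bump is positive.
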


\begin{proof}
We first show that \(r_n\to\infty\).  Every irreducible representation of \(A_n\) has dimension \(\rank(p_{n,i})\) for some \(i\), so \(r_n\) is the least dimension of an irreducible representation of \(A_n\).  If \(\pi\) is an irreducible representation of \(A_{n+1}\), then \(\pi\circ\phi_n\) is a unital finite-dimensional representation of \(A_n\).  Decomposing it into irreducible representations gives \(\dim(\pi)\geq r_n\).
Taking the minimum over the irreducible representations of \(A_{n+1}\) shows that \(r_{n+1}\geq r_n\).

Suppose, toward a contradiction, that \((r_n)\) is bounded above by \(R\) and let \(D=R!\).  Let \(\operatorname{Rep}_D(A_n)\)  be the space of unital \(*\)-homomorphisms from \(A_n\) to \(M_D\) with the point-norm topology.  This is a compact space because it is a closed subset of the compact product
\[
        \prod_{a\in(A_n)_1}(M_D)_1.
\]
The \(D/r_n\)-fold sum of an irreducible representation of \(A_n\) of dimension \(r_n\) belongs to \(\operatorname{Rep}_D(A_n)\), so the latter is nonempty.

Set
\[
        K=\prod_{n=1}^{\infty}\operatorname{Rep}_D(A_n),
\]
and let
\[
        F_n=
        \big\{(\pi_j)_j\in K:\pi_n=\pi_{n+1}\circ\phi_n\big\}.
\]
Each \(F_n\) is closed.  The family \((F_n)_n\) has the finite intersection property: given \(N\), choose \(\rho\in\operatorname{Rep}_D(A_{N+1})\), set
\[
        \pi_j=\rho\circ\phi_{j,N+1},
        \ 1\leq j\leq N+1,
\]
where \(\phi_{N+1,N+1}\) is the identity, and choose only the
\(\pi_j\) with \(j>N+1\) arbitrarily.  This gives a point of
\(F_1\cap\cdots\cap F_N\).
Compactness of \(K\) therefore gives a compatible family
\[
        (\pi_n)_n\in\bigcap_{n=1}^{\infty}F_n.
\]
It induces a unital \(*\)-homomorphism \(\pi\colon A\longrightarrow M_D\).
Since \(A\) is simple, \(\pi\) is injective, forcing \(A\) to be finite-dimensional.  This contradicts the assumption that \(A\) is non-elementary, and proves that \(r_n\to\infty\).

Since \(\liminf_n q_n=0\), there is a sequence
\(n_1<n_2<\cdots\)
such that
\[
        q_{n_k}<\frac1k,
        \qquad k\in\N.
\]
Because \(r_n\to\infty\), we also have
\(r_{n_k}\to\infty\),
while \(q_{n_k}\to0\) by construction. Hence the telescoped system
with building blocks \(A_{n_k}\) is strictly subquadratic.
\end{proof}
\subsection{Proof of the AH theorem}

\begin{prop}
\label{prop:AH-local-subquadratic}
Let \(A=\varinjlim(A_n,\phi_n)\) be a unital AH algebra with a strictly subquadratic decomposition in the sense of Definition~\ref{def:AH-subquadratic}. Then \(A\) has unital locally tracially subquadratic homogeneous approximation in the sense of Definition~\ref{def:unital-local-subquadratic}.
\end{prop}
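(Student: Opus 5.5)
Given \(\mathcal F\subset A_1\) finite, \(\varepsilon>0\), \(\eta>0\) and \(R\in\N\), we will take for \(C\) the image in \(A\) of a single late building block. Write \(\phi_{n,\infty}\colon A_n\to A\) for the canonical unital maps. The union \(\bigcup_n\phi_{n,\infty}(A_n)\) is norm-dense in \(A\), and the norm dominates \(\|\cdot\|_{2,T(A)}\). So it suffices to find, for large \(n\), elements of \(\phi_{n,\infty}(A_n)_1\) within \(\varepsilon\) in norm of each \(a\in\mathcal F\).

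First we check that \(\phi_{n,\infty}\) is injective. The connecting maps are injective, so each \(\phi_{n,m}\) is injective and hence isometric. The norm on the algebraic limit is \(\lim_m\|\phi_{n,m}(x)\|=\|x\|\), so \(\phi_{n,\infty}\) is isometric. It follows that \(C_n:=\phi_{n,\infty}(A_n)\) is a unital subalgebra of \(A\) with \(1_{C_n}=1_A\), isomorphic to
\[
\bigoplus_i p_{n,i}M_{N_{n,i}}(C(X_{n,i}))p_{n,i}.
\]

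Next we handle the approximation. For each \(a\in\mathcal F\), density gives some \(n_a\) and \(x\in A_{n_a}\) with \(\|a-\phi_{n_a,\infty}(x)\|<\varepsilon/2\). We need an element of the unit ball of \(C\), so we correct for \(\|x\|\) possibly exceeding \(1\). Replace \(x\) by \(x/\max\{1,\|x\|\}\). Since \(\|\phi_{n_a,\infty}(x)\|<1+\varepsilon/2\), this rescaling moves the element by less than \(\varepsilon/2\) in norm. Pushing forward to a common index \(n\geq\max_a n_a\) keeps the element in the unit ball, because the connecting maps are contractive. Hence every \(a\in\mathcal F\) lies within \(\varepsilon\) in norm, and therefore in \(\|\cdot\|_{2,T(A)}\), of an element of \((C_n)_1\). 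This holds for every \(n\geq n_0\).

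Finally we use strict subquadratic growth to control the ranks and dimensions. Since \(r_n\to\infty\) and \(q_n\to0\), we can enlarge \(n\) so that \(r_n\geq R\) and \(q_n<\eta\). Because \(r_n\) is the minimum rank over the blocks, every block rank satisfies \(\rank(p_{n,i})\geq R\). Also \(\max_i\dim(X_{n,i})/\rank(p_{n,i})^2=q_n<\eta\). Each \(X_{n,i}\) is compact metrizable of finite dimension by the standing convention. This verifies Definition~\ref{def:unital-local-subquadratic}.

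There is no real obstacle here. The only points needing care are the injectivity of \(\phi_{n,\infty}\), so that \(C_n\) genuinely has the stated form, and the contraction bookkeeping when choosing elements of \(C_1\).
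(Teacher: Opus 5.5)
Your proposal is correct and follows the paper's own argument: take \(C=A_n\) for \(n\) large, norm-approximate each element of \(\mathcal F\) inside \(A_n\), rescale by \(\max\{1,\|x\|\}\) to land in the unit ball, and use \(r_n\to\infty\), \(q_n\to0\) to obtain the rank and dimension--rank conditions. The only differences are cosmetic. Your explicit check that \(\phi_{n,\infty}\) is isometric spells out what the paper takes for granted when it views each \(A_n\) as a unital subalgebra of \(A\). Approximating at varying indices and then pushing forward to a common \(n\) is equivalent to the paper's choice of a single large \(n\) from the start.
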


\begin{proof}
View each \(A_n\) as a unital subalgebra of \(A\). Let
\(\mathcal F\subset A_1\) be finite, and let
\(\varepsilon>0\), \(\eta>0\), and \(R\in\N\). Choose \(n\) large
enough that
\[
        r_n\geq R,
        \qquad
        q_n<\eta,
\]
and, for each \(a\in\mathcal F\), there is \(x_a\in A_n\) with
\[
        \|a-x_a\|<\varepsilon/2.
\]
Put
\[
        y_a=\frac{x_a}{\max\{1,\|x_a\|\}}\in(A_n)_1.
\]
Since \(\|a\|\leq1\),
\[
        \|x_a-y_a\|
        =
        \max\{0,\|x_a\|-1\}
        \leq\|x_a-a\|,
\]
and therefore
\[
        \|a-y_a\|
        \leq
        \|a-x_a\|+\|x_a-y_a\|
        <\varepsilon.
\]
Norm approximation implies
\(\|\cdot\|_{2,T(A)}\)-approximation, so \(C=A_n\), together with the
elements \(y_a\), satisfies all the requirements of
Definition~\ref{def:unital-local-subquadratic}.
\end{proof}

\begin{thm}
\label{thm:AH-gamma}
Let \(A\) be a separable, simple, unital, nuclear, non-elementary AH algebra with subquadratic growth in the sense of Definition~\ref{def:AH-subquadratic}. Then \(A\) has uniform property \(\Gamma\).
\end{thm}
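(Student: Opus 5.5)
The argument is a chain of results already established, so the plan is to assemble them in order. Fix a unital AH decomposition \(A=\varinjlim(A_n,\phi_n)\) with subquadratic growth in the sense of Definition~\ref{def:AH-subquadratic}. We may take the connecting maps to be injective, as explained at the start of this section. Since \(A\) is simple, unital and non-elementary, Proposition~\ref{prop:AH-subquadratic-to-strict} provides a telescoped subsystem \((A_{n_k})_k\) with \(r_{n_k}\to\infty\) and \(q_{n_k}\to0\). Telescoping does not change the limit algebra, so \(A\) admits a strictly subquadratic decomposition.

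Next I will apply Proposition~\ref{prop:AH-local-subquadratic} to this strictly subquadratic decomposition. It shows that \(A\) has unital locally tracially subquadratic homogeneous approximation in the sense of Definition~\ref{def:unital-local-subquadratic}. To invoke Theorem~\ref{thm:local-gamma}, I then need the remaining hypotheses:
\begin{enumerate}[(i)]
\item \(A\) is separable, which holds because AH algebras are inductive limits of separable building blocks over compact metrizable spaces;
\item \(A\) is simple, unital and non-elementary, which is assumed;
\item \(T(A)\neq\emptyset\).
\end{enumerate}

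Condition (iii) is the only point needing a short argument, and I expect it to be the main (mild) obstacle. Each building block \(A_n\) is a unital homogeneous algebra and so has tracial states; for instance, evaluate at a point of \(X_{n,i}\) and take the normalized trace on the fiber. Since the connecting maps are unital, the trace spaces \(T(A_n)\) are nonempty compact sets. The restriction maps \(T(A_{n+1})\to T(A_n)\) are continuous, so \(T(A)\cong\varprojlim T(A_n)\) is nonempty by compactness. One could also cite the standard fact that unital AH algebras are stably finite and exact, hence have traces, but the inverse-limit argument is self-contained. Nuclearity is automatic and is not used beyond the hypothesis.

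With these hypotheses verified, Theorem~\ref{thm:local-gamma} gives uniform property \(\Gamma\). Inside that theorem, real rank zero of \(A^{\U}\) comes from Theorem~\ref{thm:local-rr0}, which in turn rests on Corollary~\ref{cor:compact-tracial-rank-avoidance}. Tracially locally finite nuclear dimension comes from Lemma~\ref{lem:local-tlfnd}, and Theorem~\ref{thm:VW-bridge} then yields the result. Theorem~\ref{AHgamma} from the introduction follows as the special case of this statement.
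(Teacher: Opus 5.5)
Your proposal is correct and follows the paper's proof essentially step for step: you telescope via Proposition~\ref{prop:AH-subquadratic-to-strict}, pass to unital locally tracially subquadratic homogeneous approximation via Proposition~\ref{prop:AH-local-subquadratic}, obtain \(T(A)\neq\emptyset\) from the compactness (inverse-limit) argument on the stage trace spaces, and conclude with Theorem~\ref{thm:local-gamma}. One minor wording point: nonemptiness of each \(T(A_n)\) comes from point evaluations composed with the normalized fiber trace, whereas unitality of the connecting maps is what ensures that restriction sends tracial states to tracial states.
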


\begin{proof}
Choose a subquadratic AH decomposition of \(A\).  Every stage has a
tracial state, and compactness of the product of the stage trace spaces,
together with the finite-intersection property for the compatibility
conditions, gives \(T(A)\neq\emptyset\).  By
Proposition~\ref{prop:AH-subquadratic-to-strict}, after telescoping this
decomposition is strictly subquadratic.
Proposition~\ref{prop:AH-local-subquadratic} then gives unital locally
tracially subquadratic homogeneous approximation, and
Theorem~\ref{thm:local-gamma} gives uniform property \(\Gamma\).
\end{proof}

\section{Nonunital tracial local approximation}
\label{sec:vaccaro-local}

\subsection{Locally tracially subquadratic homogeneous approximation}

Definition~\ref{def:unital-local-subquadratic} requires the local
homogeneous model to share its unit with the ambient algebra and this provides a clean route to Theorem~\ref{thm:local-gamma}, the natural counterpoint to the main result of \cite{TomsSchubCalc:preprint}.  We now give a more flexible definition for tracial local subquadratic approximation which permits nonunital local models.  The ambient algebra remains
unital, but our definition can be readily adapted to a nonunital ambient algebra should applications require it.  

\begin{notation}
\label{notation:one-point-compactification}
For a locally compact metrizable space \(Z\) we set \(\alpha Z = Z\) when \(Z\) is compact, and otherwise declare \(\alpha Z\) to be the one-point compactification of \(Z\).
\end{notation}

\begin{defn}
\label{def:vaccaro-local-subquadratic}
Let \(A\) be a unital \(C^*\)-algebra with \(T(A)\neq\emptyset\).  We
say that \(A\) has \emph{locally tracially subquadratic homogeneous
approximation} if, for every finite subset \(\mathcal F\subset A_1\),
every \(\varepsilon>0\), every \(\eta>0\), and every \(R\in\N\), there
is a \(C^*\)-subalgebra \(C\subseteq A\) of the form
\[
        C=\bigoplus_{i=1}^m B_i,
        \qquad
        B_i=\Gamma_0(\operatorname{End}(E_i)),
\]
where \(Z_i\) is locally compact and metrizable, \(E_i\to Z_i\) is the
restriction of a rank-\(r_i\) complex vector bundle over \(\alpha Z_i\),
and \(\alpha Z_i\) has finite covering dimension.  We further require that
\begin{enumerate}[(1)]
\item every element of \(\mathcal F\) is within \(\varepsilon\) in
      \(\|\cdot\|_{2,T(A)}\) of an element of \(C_1\);
\item \(r_i\geq R\) for every \(i\), and
\[
        \max_{1\leq i\leq m}
        \frac{\dim(\alpha Z_i)}{r_i^2}<\eta.
\]
\end{enumerate}
\end{defn}

Definition~\ref{def:vaccaro-local-subquadratic} is the direct
subquadratic analogue of Vaccaro's tracially LTH condition with locally
flat dimension growth from \cite[Definition~3.1]{Vaccaro:preprint}.
Indeed, suppose that for fixed \(\mathcal F\) and \(\varepsilon\),
Vaccaro's definition provides a constant \(N>0\) and, for every
\(k\in\N\), an approximating algebra
\[
        \bigoplus_i M_{n_i}(C_0(Z_i))
\]
with \(n_i\geq k\) and \(\dim(Z_i)/n_i<N\).  Given \(\eta>0\) and
\(R\in\N\), choose \(k\geq R\) so large that \(N/k<\eta\).  Using
\(\dim(\alpha Z_i)\leq\dim(Z_i)\), as in the proof of
\cite[Lemma~3.4]{Vaccaro:preprint}, we obtain
\[
        \frac{\dim(\alpha Z_i)}{n_i^2}
        \leq
        \frac{\dim(Z_i)}{n_i^2}
        <\frac{N}{n_i}
        \leq\frac{N}{k}
        <\eta.
\]
Thus, Vaccaro's hypothesis implies
Definition~\ref{def:vaccaro-local-subquadratic}.  Theorem~\ref{thm:vaccaro-local-gamma} below therefore recovers the uniform
property~\(\Gamma\) conclusion of \cite[Theorem~3.6]{Vaccaro:preprint}, extending it from locally flat to arbitrary subquadratic dimension
growth.  The main result of \cite{TomsSchubCalc:preprint} shows that the quadratic endpoint cannot be included in general so that Theorem~\ref{thm:vaccaro-local-gamma} is optimal.  Note that
Definition~\ref{def:unital-local-subquadratic} is the special case of Definition~\ref{def:vaccaro-local-subquadratic}
in which the local model shares a unit with the ambient algebra and the base spaces \(Z_i\) are compact.

If \(C\subseteq A\) is a \(C^*\)-subalgebra, let \(s_C\in A^{**}\)
denote the image of the unit of \(C^{**}\) under the bidual of the
inclusion.  If \(C=\bigoplus_{i=1}^m B_i\), write \(s_i\) for the image
of the unit of \(B_i^{**}\), so that
\(s_C=s_1+\cdots+s_m\) orthogonally.  We identify a bounded trace on a
possibly nonunital \(C^*\)-algebra with its normal extension to the
bidual, and hence with its restriction to the multiplier algebra.
Accordingly, if \(x=x^*\in B_i\), then
\(\eta_\delta(x-\lambda)\) means
\[
        \eta_\delta\bigl(x-\lambda 1_{M(B_i)}\bigr)
        \in M(B_i).
\]

\subsection{Cutoffs and rank avoidance}

\begin{lem}
\label{lem:unit-cutoff}
Let \(A\) be unital, let
\[
        C=\bigoplus_{i=1}^m B_i\subseteq A,
        \qquad
        B_i=\Gamma_0(\operatorname{End}(E_i)),
\]
and let \(e\in C_1\). Put
\[
        u=e^*e=\bigoplus_{i=1}^m u_i.
\]
There are compactly supported central positive contractions
\[
        q_i\in C_c(Z_i)\cdot1_{E_i}\subseteq B_i
\]
such that, with \(q=\bigoplus_iq_i\), the following inequality holds
in \(A^{**}\):
\[
        1_A-q\leq4(1_A-u)^2.
\]
Consequently,
\[
\begin{aligned}
        \sup_{\tau\in T(A)}\tau(1_A-q)
        &\leq4\|1_A-u\|_{2,T(A)}^2 \\
        &\leq16\|1_A-e\|_{2,T(A)}^2.
\end{aligned}
\]
\end{lem}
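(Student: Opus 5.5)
The plan is to reduce the inequality in \(A^{**}\) to a pointwise eigenvalue inequality over each \(Z_i\). First, decompose \(1_A-u\) along the projection \(s_C=s_1+\cdots+s_m\). Since \(u\in C\), we have \(u=s_Cus_C\) and \(u\) commutes with \(s_C\), so
\[
        1_A-u=(1_A-s_C)+(s_C-u),
\]
with the two summands orthogonal. Hence
\[
        (1_A-u)^2=(1_A-s_C)+\sum_{i=1}^m(s_i-u_i)^2 .
\]
Similarly, \(q\leq s_C\) gives \(1_A-q=(1_A-s_C)+\sum_i(s_i-q_i)\). It therefore suffices to find \(q_i\) with
\[
        s_i-q_i\leq 4(s_i-u_i)^2
\]
for each \(i\). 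Since \(M(B_i)\) maps into \(s_iA^{**}s_i\) by a unital \(*\)-homomorphism onto its image, with \(1_{M(B_i)}\mapsto s_i\), order inequalities in \(M(B_i)\) transfer. So the target becomes
\[
        1_{M(B_i)}-q_i\leq 4(1_{M(B_i)}-u_i)^2
\]
in \(M(B_i)\), which is the algebra of bounded continuous sections of \(\operatorname{End}(E_i)\).

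Next, define the cutoff through the norm function \(z\mapsto\|u_i(z)\|\). This function is continuous, takes values in \([0,1]\), and vanishes at infinity because \(u_i\in\Gamma_0(\operatorname{End}(E_i))\). Let \(g\colon[0,1]\to[0,1]\) be \(g(t)=0\) for \(t\leq 1/2\) and \(g(t)=1-4(1-t)^2\) for \(t\in[1/2,1]\); this is continuous at \(1/2\). Put \(q_i(z)=g(\|u_i(z)\|)1_{E_{i,z}}\). The support of \(q_i\) lies in \(\{z:\|u_i(z)\|\geq1/2\}\), which is compact, so \(q_i\in C_c(Z_i)\cdot1_{E_i}\) is a central positive contraction. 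Pointwise, the smallest eigenvalue of \((1-u_i(z))^2\) is \((1-\|u_i(z)\|)^2\), since \(0\leq u_i(z)\leq1\). Thus it suffices to check the scalar inequality \(1-g(t)\leq4(1-t)^2\). For \(t\leq1/2\) the right side is at least \(1\), and for \(t\geq1/2\) equality holds by construction. The only delicate point is this choice of \(g\): a linear ramp fails near \(t=1\), so the cutoff must vanish quadratically in \(1-t\).

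Finally, derive the trace estimates. Apply the normal extension of \(\tau\in T(A)\) to get
\[
        \tau(1_A-q)\leq4\tau\bigl((1_A-u)^2\bigr)\leq4\|1_A-u\|_{2,T(A)}^2 .
\]
Write \(1_A-e^*e=(1_A-e^*)+e^*(1_A-e)\). The trace property gives \(\|x^*\|_{2,\tau}=\|x\|_{2,\tau}\), and \(\|e^*\|\leq1\), so
\[
        \|1_A-u\|_{2,T(A)}\leq2\|1_A-e\|_{2,T(A)} .
\]
Squaring and multiplying by \(4\) yields the constant \(16\).
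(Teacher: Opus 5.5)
Your argument is correct and is essentially the paper's proof: the same orthogonal splitting \(1_A-q=(1_A-s_C)+\sum_i(s_i-q_i)\) along the support projections, a pointwise fiber check of \(s_i-q_i\leq 4(s_i-u_i)^2\), and the same estimate \(\|1_A-e^*e\|_{2,T(A)}\leq 2\|1_A-e\|_{2,T(A)}\). The only difference is the cutoff: the paper takes any \(q_i\in C_c(Z_i)\) with \(0\leq q_i\leq 1\) and \(q_i=1\) on the compact set \(\{z:\|u_i(z)\|\geq 1/2\}\), so the inequality is trivial there and follows from \(4(1-u_i(z))^2\geq 1\) elsewhere; this shows your ``delicate point'' comes from ramping on \([1/2,1]\) rather than being a genuine constraint, although your explicit choice \(g(\|u_i(\cdot)\|)\) also works.
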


\begin{proof}
Let \(s_i\in A^{**}\) be the support projection of \(B_i\), and put
\[
        s_C=\sum_{i=1}^m s_i.
\]
For each \(i\), the set
\[
        K_i=\{z\in Z_i:\|u_i(z)\|\geq1/2\}
\]
is compact because \(u_i\) vanishes at infinity. Choose
\(q_i\in C_c(Z_i)_+\) with \(0\leq q_i\leq1\) and \(q_i=1\) on
\(K_i\), and regard \(q_i\) as the corresponding central element of
\(B_i\).

If \(q_i(z)<1\), then \(\|u_i(z)\|<1/2\), and hence
\[
        (1-q_i(z))1_{E_{i,z}}
        \leq1_{E_{i,z}}
        \leq4(1_{E_{i,z}}-u_i(z))^2.
\]
The same inequality is trivial when \(q_i(z)=1\). Thus, under the
canonical embedding of \(M(B_i)\) into the corner
\(s_iA^{**}s_i\),
\[
        s_i-q_i\leq4(s_i-u_i)^2.
\]

The projections
\[
        1_A-s_C,s_1,\ldots,s_m
\]
are mutually orthogonal, and \(u=\sum_i u_i\). Therefore, in \(A^{**}\),
\[
\begin{aligned}
        1_A-q
        &=(1_A-s_C)+\sum_{i=1}^m(s_i-q_i) \\
        &\leq
        4(1_A-s_C)+4\sum_{i=1}^m(s_i-u_i)^2 \\
        &=4(1_A-u)^2.
\end{aligned}
\]
Applying the normal extension of a trace \(\tau\in T(A)\) gives the
first estimate.

Finally,
\[
        1_A-e^*e=(1_A-e^*)+e^*(1_A-e),
\]
so the tracial property and \(\|e\|\leq1\) give
\[
        \|1_A-e^*e\|_{2,T(A)}
        \leq2\|1_A-e\|_{2,T(A)}.
\]
This yields the second estimate.
\end{proof}

\begin{lem}
\label{lem:locally-compact-rank-avoidance}
Let \(Z\) be locally compact and metrizable, suppose that
\(d=\dim(\alpha Z)<\infty\), and let \(E\to Z\) be the restriction of
a rank-\(r\) vector bundle over \(\alpha Z\).  Put
\(B=\Gamma_0(\operatorname{End}(E))\), let
\(q\in C_c(Z)_+\cdot1_E\) be a central positive contraction, and let
\(\Lambda\subset(-1,1)\) be finite.  If \(1\leq k\leq r\) and
\(d<k^2\), then, for every \(f\in B_{1,\mathrm{sa}}\) and every
\(\varepsilon>0\), there are \(g\in B_{1,\mathrm{sa}}\) and
\(\delta>0\) such that \(\|f-g\|<\varepsilon\) and
\[
        \tr_r\!\left(
        \eta_\delta(g(z)-\lambda1_{E_z})
        \right)
        \leq \frac{k}{r}q(z)+1-q(z),
        \qquad z\in Z,\ \lambda\in\Lambda,
\]
where \(\tr_r\) denotes the normalized fiber trace.  It follows that for
every bounded positive trace \(\rho\) on \(B\),
\[
        \rho\bigl(\eta_\delta(g-\lambda)\bigr)
        \leq
        \frac{k}{r}\rho(q)
        +\rho\bigl(1_{M(B)}-q\bigr),
        \qquad \lambda\in\Lambda.
\]
\end{lem}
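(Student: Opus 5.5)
Our plan is to reduce to the compact case of Proposition~\ref{prop:compact-bundle-rank-avoidance} on a compact neighbourhood of the support of \(q\), and to patch back into \(B\) using a cutoff function. Let \(K=\supp q\), which is compact in \(Z\). Choose a compact neighbourhood \(K'\) of \(K\) in \(Z\) and a function \(\varphi\in C_c(Z)\) with \(0\leq\varphi\leq1\), \(\varphi=1\) on a neighbourhood of \(K\), and \(\supp\varphi\subseteq \operatorname{int}K'\). Since \(K'\) is closed in \(\alpha Z\), we have \(\dim K'\leq d<k^2\). Moreover \(E|_{K'}\) is a rank-\(r\) bundle over the compact metrizable space \(K'\). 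We apply Proposition~\ref{prop:compact-bundle-rank-avoidance} to \(f|_{K'}\) and \(\Lambda\), with tolerance \(\varepsilon\). This yields a self-adjoint contractive section \(g'\) over \(K'\) with \(\|f|_{K'}-g'\|<\varepsilon\) and \(\dim\ker(g'(z)-\lambda)<k\) for all \(z\in K'\) and \(\lambda\in\Lambda\).

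Next we glue. Define
\[
g=\varphi g'+(1-\varphi)f,
\]
where \(\varphi g'\) is extended by zero off \(K'\). Then \(g\) is a self-adjoint section vanishing at infinity, so \(g\in B_{\mathrm{sa}}\). Pointwise, \(g(z)\) is a convex combination of two contractions, hence \(\|g\|\leq1\). Also
\[
\|f-g\|=\sup_z\varphi(z)\,\|f(z)-g'(z)\|<\varepsilon.
\]
On the set \(\{\varphi=1\}\supseteq K\) we have \(g=g'\), so the kernel bound holds on a compact neighbourhood \(K\) of \(\supp q\). (We may simply take \(K\) itself; this is all that is needed.)

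To obtain \(\delta\), we argue as in Corollary~\ref{cor:compact-tracial-rank-avoidance}. The \(k\)-th smallest eigenvalue \(\alpha_\lambda(z)\) of \(|g(z)-\lambda|\) is continuous and strictly positive on the compact set \(K\). Choose \(\delta<\min_{\lambda\in\Lambda}\min_{K}\alpha_\lambda\), with the minimum taken over a finite \(\Lambda\). Then for \(z\in K\), \(\eta_\delta(g(z)-\lambda)\) is a contraction supported on fewer than \(k\) eigenvectors, so its normalized trace is at most \((k-1)/r\). Consequently:
\begin{enumerate}[(1)]
\item On \(K\), the trace is \(\leq k/r\), which is bounded by \(\frac{k}{r}q+1-q\) because \(k/r\leq1\) and \(0\leq q\leq1\).
\item Off \(K\), we have \(q=0\), and the trivial bound \(\tr_r(\eta_\delta(\cdot))\leq1\) gives the inequality.
\end{enumerate}
This proves the pointwise estimate.

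Finally, for the trace statement, note that \(\eta_\delta(g-\lambda)\) lies in \(M(B)\) (the constant part \(\eta_\delta(-\lambda)\) appears at infinity) and is a bounded continuous section. A bounded positive trace \(\rho\) on \(B\) restricts on the center to a finite Borel measure \(\mu_\rho\) on \(Z\), and \(\rho(h)=\int_Z\tr_r(h(z))\,d\mu_\rho(z)\) for \(h\in B\). The main obstacle is extending this formula to the strict-continuous multipliers \(\eta_\delta(g-\lambda)\) and \(1_{M(B)}-q\). We plan to handle it via an approximate unit \(\psi_m\in C_c(Z)\) increasing to \(1\): the normal extension gives \(\rho(x)=\lim_m\rho(\psi_m x\psi_m)\) for \(x\in M(B)_+\), and monotone convergence applied to the integral formula yields \(\rho(x)=\int\tr_r(x(z))\,d\mu_\rho\). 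Integrating the pointwise inequality against \(\mu_\rho\) then gives
\[
\rho\bigl(\eta_\delta(g-\lambda)\bigr)\leq\frac{k}{r}\rho(q)+\rho\bigl(1_{M(B)}-q\bigr).
\]
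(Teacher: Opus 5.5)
Your proposal is correct and follows essentially the same route as the paper. You apply Proposition~\ref{prop:compact-bundle-rank-avoidance} on a compact space containing \(\supp q\), cut back into \(B\) with a function equal to \(1\) on \(\supp q\), choose \(\delta\) by compactness as in Corollary~\ref{cor:compact-tracial-rank-avoidance}, and integrate against \(\mu_\rho\). The paper instead extends \(f\) by zero to \(\alpha Z\), perturbs there and sets \(g=\chi h|_Z\), whereas you perturb only over a compact neighbourhood \(K'\subseteq Z\) and glue via \(\varphi g'+(1-\varphi)f\). Your version makes the norm estimate automatic and never uses the extension of \(E\) over \(\alpha Z\); moreover your \(K'\) is always compact metrizable, which \(\alpha Z\) need not be.
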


\begin{proof}
If \(\Lambda=\emptyset\), take \(g=f\) and, for example, \(\delta=1\).
Assume henceforth that \(\Lambda\neq\emptyset\).  Let
\(\widetilde E\to\alpha Z\) be the compactification bundle, i.e.,
\(\widetilde E|_Z=E\).  Extend \(f\) to
\(\widetilde f\in\Gamma(\operatorname{End}(\widetilde E))_{1,\mathrm{sa}}\)
by setting \(\widetilde f(\infty)=0\).  By
Proposition~\ref{prop:compact-bundle-rank-avoidance}, there is
\(h\in\Gamma(\operatorname{End}(\widetilde E))_{1,\mathrm{sa}}\) such
that
\[
        \|\widetilde f-h\|<\varepsilon/2
        \quad\text{and}\quad
        \dim\ker(h(z)-\lambda1_{\widetilde E_z})<k
\]
for every \(z\in\alpha Z\) and \(\lambda\in\Lambda\).  By compactness
of \(\alpha Z\), finiteness of \(\Lambda\), and the argument in
Corollary~\ref{cor:compact-tracial-rank-avoidance}, there is
\(\delta>0\) such that
\[
        \tr_r\!\left(
        \eta_\delta(h(z)-\lambda1_{\widetilde E_z})
        \right)<\frac{k}{r}
\]
for every \(z\in\alpha Z\) and \(\lambda\in\Lambda\).

Choose \(\chi\in C_c(Z)_+\) with \(0\leq\chi\leq1\), with
\(\chi=1\) on \(\supp(q)\), and with
\(\|f-\chi f\|<\varepsilon/2\).  Set
\(g=\chi h|_Z\).
Then \(g\in B_{1,\mathrm{sa}}\) and
\[
        \|f-g\|
        \leq\|f-\chi f\|+\|\chi(f-h|_Z)\|
        <\varepsilon.
\]
If \(q(z)>0\), then \(\chi(z)=1\), so \(g(z)=h(z)\) and
\[
        \tr_r\!\left(
        \eta_\delta(g(z)-\lambda1_{E_z})
        \right)<\frac{k}{r}
        \leq \frac{k}{r}q(z)+1-q(z).
\]
If \(q(z)=0\), the desired inequality follows from
\(0\leq\eta_\delta\leq1\).  This proves the fiberwise estimate. If \(\rho\) is a bounded positive
trace on \(B\), then its restriction to the central copy of
\(C_0(Z)\) is represented by a finite positive Radon measure
\(\mu_\rho\). Since each matrix fiber has a unique normalized trace,
local trivializations and a partition of unity give
\[
        \rho(v)
        =
        \int_Z\tr_r(v(z))\,d\mu_\rho(z),
        \qquad v\in B.
\]
The same formula holds for the normal extension of \(\rho\) on the
positive multiplier elements appearing here. Integrating the
fiberwise estimate therefore gives
\[
        \rho\bigl(\eta_\delta(g-\lambda)\bigr)
        \leq
        \frac{k}{r}\rho(q)
        +\rho\bigl(1_{M(B)}-q\bigr),
        \qquad \lambda\in\Lambda.
\]
\end{proof}

\subsection{The local theorem}

\begin{thm}
\label{thm:vaccaro-local-rr0}
Let \(A\) be a unital \(C^*\)-algebra with \(T(A)\neq\emptyset\).  If
\(A\) has locally tracially subquadratic homogeneous approximation,
then \(A^{\U}\) has real rank zero.
\end{thm}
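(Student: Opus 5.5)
We verify property \((FG)\) of Definition~\ref{def:property-S} and then invoke Proposition~\ref{prop:S-criterion}, following the template of Theorem~\ref{thm:local-rr0}. The new issue is that the approximating subalgebra \(C\) need not contain \(1_A\). The plan is therefore to approximate each \(a_n\) by a self-adjoint element of \(C\), apply rank avoidance there, and control the missing part of the unit tracially using Lemma~\ref{lem:unit-cutoff}. This works because \(1_A\) itself is tracially approximated by \(C\).

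Fix \(a_n\in A_{1,\mathrm{sa}}\), a finite set \(\Lambda\subset(-1,1)\), and \(\theta_n\searrow0\) with \(\eta_n=\theta_n^2/4\) and \(1/R_n<\theta_n/4\), as before. Apply Definition~\ref{def:vaccaro-local-subquadratic} to the finite set \(\{a_n,1_A\}\) with tolerance \(\varepsilon_n=\min(1/n,\theta_n)\). This gives \(C_n=\bigoplus_iB_{n,i}\), an element \(d_n\in(C_n)_1\) with \(\|a_n-d_n\|_{2,T(A)}<\varepsilon_n\), and \(e_n\in(C_n)_1\) with \(\|1_A-e_n\|_{2,T(A)}<\varepsilon_n\). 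Replace \(d_n\) by its self-adjoint part \(c_n\), as in the proof of Theorem~\ref{thm:local-rr0}. Lemma~\ref{lem:unit-cutoff} applied to \(e_n\) produces central cutoffs \(q_n=\bigoplus_iq_{n,i}\) with
\[
\sup_{\tau\in T(A)}\tau(1_A-q_n)\leq16\varepsilon_n^2.
\]
Set \(k_{n,i}=\lceil\theta_nr_{n,i}/2\rceil\), so that \(\dim(\alpha Z_{n,i})<k_{n,i}^2\) and \(k_{n,i}/r_{n,i}<\theta_n\). Lemma~\ref{lem:locally-compact-rank-avoidance}, applied to \(c_{n,i}\) and \(q_{n,i}\), yields \(b_{n,i}\in(B_{n,i})_{1,\mathrm{sa}}\) within \(1/(nm_n)\) of \(c_{n,i}\), together with \(\delta_{n,i}>0\) satisfying the fiberwise bound. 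Put \(b_n=\sum_ib_{n,i}\in A_{1,\mathrm{sa}}\) (an orthogonal sum) and \(\delta_n=\min_i\delta_{n,i}\). Then \(\|a_n-b_n\|_{2,T(A)}\to0\).

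The main obstacle is the trace estimate for \(\eta_{\delta_n}(b_n-\lambda)\) computed in the unital algebra \(A\). There \(b_n-\lambda1_A\) splits, in \(A^{**}\), as \(\bigoplus_i(b_{n,i}-\lambda s_i)\) plus \(-\lambda(1_A-s_{C_n})\). The resulting functional calculus is
\[
\eta_{\delta_n}(b_n-\lambda)=\sum_i\eta_{\delta_n}(b_{n,i}-\lambda s_i)+\eta_{\delta_n}(-\lambda)(1_A-s_{C_n}),
\]
where each summand \(\eta_{\delta_n}(b_{n,i}-\lambda s_i)\) is the multiplier element of the convention above. I will justify this by noting that the \(s_i\) and \(1_A-s_{C_n}\) are central for the relevant von Neumann subalgebra and sum to \(1_A\), so functional calculus decomposes blockwise. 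For \(\tau\in T(A)\), restrict its normal extension to each \(B_{n,i}\) to obtain a bounded trace \(\rho_i\). Using \(\eta_{\delta_n}\leq\eta_{\delta_{n,i}}\) and the second estimate of Lemma~\ref{lem:locally-compact-rank-avoidance}, we get
\[
\tau\bigl(\eta_{\delta_n}(b_n-\lambda)\bigr)\leq\theta_n\sum_i\tau(q_{n,i})+\sum_i\tau(s_i-q_{n,i})+\tau(1_A-s_{C_n})\leq\theta_n+\tau(1_A-q_n)\leq\theta_n+16\varepsilon_n^2.
\]
This bound is uniform in \(\tau\) and \(\lambda\) and tends to zero, which establishes property \((FG)\). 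Proposition~\ref{prop:S-criterion} then gives real rank zero of \(A^{\U}\). The care needed is in checking that the normal extension of \(\tau\) to \(M(B_{n,i})\subseteq s_iA^{**}s_i\) agrees with the bidual identification fixed before Lemma~\ref{lem:unit-cutoff}, so that the integral formula in Lemma~\ref{lem:locally-compact-rank-avoidance} applies to \(\rho_i\).
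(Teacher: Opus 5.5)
Your proposal is correct and follows the paper's proof essentially step for step: you approximate \(\{a_n,1_A\}\), use Lemma~\ref{lem:unit-cutoff} to obtain the central cutoffs \(q_n\), and apply Lemma~\ref{lem:locally-compact-rank-avoidance} blockwise with \(k_{n,i}=\lceil\theta_n r_{n,i}/2\rceil\). You then decompose \(\eta_{\delta_n}(b_n-\lambda)\) in \(A^{**}\) along the orthogonal projections \(1_A-s_{C_n},s_{n,1},\dots,s_{n,m_n}\), exactly as the paper does. The remaining differences are cosmetic tolerance choices: your \(\varepsilon_n\leq\theta_n\) gives the bound \(\theta_n+16\varepsilon_n^2\) rather than the paper's \(2\theta_n\), and the per-block tolerance \(1/(nm_n)\) is unnecessary because the norm on the orthogonal direct sum is a maximum.
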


\begin{proof}
We verify property \((FG)\).  Let \(a_n\in A_{1,\mathrm{sa}}\) be a
sequence, and let \(\Lambda\subset(-1,1)\) be finite.  Choose
\(\theta_n\in(0,1/4)\) with \(\theta_n\to0\), and put
\[
        \varepsilon_n
        =\min\left\{\frac1n,\frac{\sqrt{\theta_n}}4\right\},
        \qquad
        \eta_n=\frac{\theta_n^2}{4}.
\]
Choose \(R_n\in\N\) such that \(1/R_n<\theta_n/4\).  Apply
Definition~\ref{def:vaccaro-local-subquadratic} to
\(\{a_n,1_A\}\), with approximation tolerance \(\varepsilon_n\),
dimension-rank tolerance \(\eta_n\), and rank bound \(R_n\).  We
obtain
\[
        C_n=\bigoplus_{i=1}^{m_n}B_{n,i},
        \qquad
        B_{n,i}=\Gamma_0(\operatorname{End}(E_{n,i})),
\]
and contractions \(d_n,e_n\in C_n\) such that
\[
        \|a_n-d_n\|_{2,T(A)}<\varepsilon_n,
        \qquad
        \|1_A-e_n\|_{2,T(A)}<\varepsilon_n.
\]
Set
\[
        c_n=\frac{d_n+d_n^*}{2}
            =\bigoplus_{i=1}^{m_n}c_{n,i}.
\]
Then \(c_n\in(C_n)_{1,\mathrm{sa}}\) and
\(\|a_n-c_n\|_{2,T(A)}<\varepsilon_n\).

Put \(u_n=e_n^*e_n\).  Lemma~\ref{lem:unit-cutoff} gives compactly
supported central positive contractions \(q_{n,i}\in B_{n,i}\) such
that, for \(q_n=\bigoplus_i q_{n,i}\),
\[
        \sup_{\tau\in T(A)}\tau(1_A-q_n)
        \leq16\varepsilon_n^2
        \leq\theta_n.
\]
Let \(r_{n,i}=\rank(E_{n,i})\), and set
\[
        k_{n,i}=\left\lceil\frac{\theta_n r_{n,i}}2\right\rceil.
\]
Since \(r_{n,i}\geq R_n\),
\[
        \frac{k_{n,i}}{r_{n,i}}
        \leq\frac{\theta_n}{2}+\frac1{r_{n,i}}
        <\theta_n.
\]
In particular, \(1\leq k_{n,i}\leq r_{n,i}\).  Moreover,
\[
        \dim(\alpha Z_{n,i})
        <\frac{\theta_n^2r_{n,i}^2}{4}
        \leq k_{n,i}^2.
\]
Apply Lemma~\ref{lem:locally-compact-rank-avoidance} to
\(c_{n,i}\), the cutoff \(q_{n,i}\), the set \(\Lambda\), the integer
\(k_{n,i}\), and the tolerance \(\varepsilon_n\).  We obtain
\(b_{n,i}\in(B_{n,i})_{1,\mathrm{sa}}\) and \(\delta_{n,i}>0\) with
\(\|b_{n,i}-c_{n,i}\|<\varepsilon_n\)
and, for every bounded positive trace \(\rho\) on \(B_{n,i}\),
\[
        \rho\bigl(\eta_{\delta_{n,i}}(b_{n,i}-\lambda)\bigr)
        \leq
        \frac{k_{n,i}}{r_{n,i}}\rho(q_{n,i})
        +\rho\bigl(1_{M(B_{n,i})}-q_{n,i}\bigr),
        \qquad \lambda\in\Lambda.
\]
Set
\[
        b_n=\bigoplus_{i=1}^{m_n}b_{n,i},
        \qquad
        \delta_n=\min_i\delta_{n,i}.
\]
Then
\(\|a_n-b_n\|_{2,T(A)}<2\varepsilon_n\),
so \((a_n-b_n)_n\in c_{\U}(A)\).

Fix \(\tau\in T(A)\) and \(\lambda\in\Lambda\), and let \(s_{n,i}\)
and \(s_{C_n}\) be the support projections introduced above.  Since
\(\delta_n\leq\delta_{n,i}\), we have
\(\eta_{\delta_n}\leq\eta_{\delta_{n,i}}\).  Moreover,
\(b_{n,i}\in s_{n,i}A^{**}s_{n,i}\),
\(b_n=\sum_i b_{n,i}\), and
\(s_{C_n}=\sum_i s_{n,i}\).  Thus the mutually orthogonal projections
\(1-s_{C_n},s_{n,1},\ldots,s_{n,m_n}\) sum to \(1_{A^{**}}\) and
reduce \(b_n-\lambda 1_{A^{**}}\).  Writing
\(\eta_{\delta_n}^{(i)}\) for functional calculus in the unital corner
\(s_{n,i}A^{**}s_{n,i}\), we therefore have
\[
\begin{aligned}
 b_n-\lambda 1_{A^{**}}
 &=-\lambda(1-s_{C_n})
   +\sum_{i=1}^{m_n}(b_{n,i}-\lambda s_{n,i}),\\
 \eta_{\delta_n}(b_n-\lambda 1_{A^{**}})
 &=\eta_{\delta_n}(-\lambda)(1-s_{C_n})
   +\sum_{i=1}^{m_n}
     \eta_{\delta_n}^{(i)}(b_{n,i}-\lambda s_{n,i})\\
 &=\eta_{\delta_n}(-\lambda)(1-s_{C_n})
   +\sum_{i=1}^{m_n}\eta_{\delta_n}(b_{n,i}-\lambda).
\end{aligned}
\]
The second equality is the componentwise functional calculus for these
orthogonal corners, while the final equality uses the multiplier-algebra
convention fixed above for each summand \(B_{n,i}\).
Applying the preceding trace estimates to the bounded traces
\(\tau|_{B_{n,i}}\), and using
\(0\leq\eta_{\delta_n}(-\lambda)\leq1\), gives
\[
\begin{aligned}
        \tau\bigl(\eta_{\delta_n}(b_n-\lambda)\bigr)
        &\leq \tau(1-s_{C_n})
        +\sum_{i=1}^{m_n}
          \left(
          \frac{k_{n,i}}{r_{n,i}}\tau(q_{n,i})
          +\tau(s_{n,i}-q_{n,i})
          \right)\\
        &\leq \theta_n\tau(q_n)+\tau(1-q_n)\\
        &\leq2\theta_n.
\end{aligned}
\]
Thus
\[
        \max_{\lambda\in\Lambda}
        \sup_{\tau\in T(A)}
        \tau\bigl(\eta_{\delta_n}(b_n-\lambda)\bigr)
        \leq2\theta_n\longrightarrow0.
\]
Thus, we have established property \((FG)\), so
\(\rr(A^{\U})=0\) by Proposition~\ref{prop:S-criterion}.
\end{proof}

\begin{lem}
\label{lem:vaccaro-local-tlfnd}
If \(A\) has locally tracially subquadratic homogeneous approximation,
then \(A\) has tracially locally finite nuclear dimension.
\end{lem}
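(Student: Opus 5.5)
The plan is to follow the proof of Lemma~\ref{lem:local-tlfnd}, changing only how we argue finite nuclear dimension of the approximating algebra, since the local models are now the possibly nonunital algebras \(\Gamma_0(\operatorname{End}(E_i))\). Fix a finite set \(\mathcal F\subset A_1\) and \(\varepsilon>0\). Apply Definition~\ref{def:vaccaro-local-subquadratic} with this \(\mathcal F\), this \(\varepsilon\), any \(\eta>0\), and \(R=1\). This gives \(C=\bigoplus_{i=1}^m B_i\subseteq A\) with \(B_i=\Gamma_0(\operatorname{End}(E_i))\), and each element of \(\mathcal F\) lies within \(\varepsilon\) in \(\|\cdot\|_{2,T(A)}\) of an element of \(C_1\). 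Definition~\ref{def:tlfnd} then holds once we know that \(C\) has finite nuclear dimension. Because nuclear dimension is preserved under finite direct sums \cite[Propositions~2.3--2.5]{WinterZacharias:AIM}, it is enough to check each \(B_i\).

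For a single summand, write \(\widetilde E_i\to\alpha Z_i\) for the extending bundle, and put \(\widetilde B_i=\Gamma(\operatorname{End}(\widetilde E_i))\). Choose a projection \(p_i\in M_{N_i}(C(\alpha Z_i))\) representing \(\widetilde E_i\); this is possible because \(\alpha Z_i\) is compact Hausdorff. Then \(\widetilde B_i\cong p_iM_{N_i}(C(\alpha Z_i))p_i\). Since \(\dim(\alpha Z_i)<\infty\), \(C(\alpha Z_i)\) has finite nuclear dimension. Matrix amplification and passage to the hereditary corner cut by \(p_i\) preserve this, so \(\widetilde B_i\) has finite nuclear dimension.

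Next, \(B_i\) consists of the sections in \(\widetilde B_i\) that vanish at the point at infinity; when \(Z_i\) is compact, \(B_i=\widetilde B_i\). So \(B_i\) is a closed two-sided ideal of \(\widetilde B_i\), and in particular a hereditary \(C^*\)-subalgebra. Nuclear dimension does not increase on passing to hereditary subalgebras, or to ideals, by \cite[Proposition~2.5]{WinterZacharias:AIM}. Hence \(\dim_{\mathrm{nuc}}(B_i)\leq\dim_{\mathrm{nuc}}(\widetilde B_i)<\infty\).

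The main point to check is that \(B_i\) really is an ideal of \(\widetilde B_i\) under the identification in Definition~\ref{def:vaccaro-local-subquadratic}, namely that \(E_i\) is the restriction of \(\widetilde E_i\). The rest is bookkeeping. We do not need non-unitality of \(C\) in \(A\) to cause any trouble, because Definition~\ref{def:tlfnd} only asks for a \(C^*\)-subalgebra of finite nuclear dimension, not a unital one. Combined with the approximation property~(1) of Definition~\ref{def:vaccaro-local-subquadratic}, this completes the plan.
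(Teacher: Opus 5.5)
Your proof is correct and is essentially the paper's argument: a projection representing the extended bundle realizes each summand as a hereditary subalgebra (or an ideal of one) of a matrix algebra over a commutative \(C^*\)-algebra with finite-dimensional spectrum, and the Winter--Zacharias permanence properties for hereditary subalgebras, matrix amplification, and finite direct sums finish the job. The only difference is the order of two steps. You bound the nuclear dimension over the compact space \(\alpha Z_i\) and then pass to the ideal of sections vanishing at infinity. The paper instead identifies \(B_i\) directly with \(p_iM_{N_i}(C_0(Z_i))p_i\) and uses \(\dim Z_i\leq\dim(\alpha Z_i)\), so your ordering has the small advantage of never needing that subspace-dimension inequality.
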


\begin{proof}
By scaling, it is enough to consider finite subsets of \(A_1\). Apply
Definition~\ref{def:vaccaro-local-subquadratic} with arbitrary positive
dimension--rank tolerance and with rank bound \(R=1\).

Since \(E_i\) extends to a vector bundle over the compact space
\(\alpha Z_i\), there are \(N_i\in\N\) and a projection
\[
        p_i\in M_{N_i}(C(\alpha Z_i))
\]
such that
\[
        \Gamma_0(\operatorname{End}(E_i))
        \cong
        p_iM_{N_i}(C_0(Z_i))p_i.
\]
This is a hereditary subalgebra of \(M_{N_i}(C_0(Z_i))\). Since
\[
        \dim Z_i\leq\dim(\alpha Z_i)<\infty,
\]
\cite[Propositions~2.3--2.5]{WinterZacharias:AIM} shows that each
summand has finite nuclear dimension. Finite direct sums preserve
finite nuclear dimension, so the resulting algebra \(C\) gives the
required tracial local approximation.
\end{proof}

\begin{thm}
\label{thm:vaccaro-local-gamma}
Let \(A\) be separable, simple, unital, and non-elementary, with
\(T(A)\neq\emptyset\).  If \(A\) has locally tracially subquadratic
homogeneous approximation, then \(A\) has uniform property \(\Gamma\).
\end{thm}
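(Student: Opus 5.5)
The proof will follow the same three-step pattern as the proof of Theorem~\ref{thm:local-gamma}, with the nonunital ingredients of this section replacing the unital ones. First, since \(A\) is simple and non-elementary, it has no nonzero finite-dimensional representations. Indeed, a nonzero finite-dimensional representation of a simple algebra would be injective, which would make \(A\) finite-dimensional and simple, hence a full matrix algebra. That contradicts non-elementarity.

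Second, I will invoke Theorem~\ref{thm:vaccaro-local-rr0}. Its hypotheses are exactly that \(A\) is unital with \(T(A)\neq\emptyset\) and has locally tracially subquadratic homogeneous approximation, so it yields that \(A^{\U}\) has real rank zero. Third, Lemma~\ref{lem:vaccaro-local-tlfnd} gives that \(A\) has tracially locally finite nuclear dimension in the sense of Definition~\ref{def:tlfnd}.

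With these three facts in hand, together with separability, unitality and \(T(A)\neq\emptyset\), every hypothesis of Theorem~\ref{thm:VW-bridge} is met. That theorem concludes that \(A\) is uniformly McDuff and hence has uniform property \(\Gamma\).

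There is no genuine obstacle left at this stage; the substantive work was done in Theorem~\ref{thm:vaccaro-local-rr0} through the cutoff Lemma~\ref{lem:unit-cutoff} and Lemma~\ref{lem:locally-compact-rank-avoidance}. The only point needing a line of care is matching the definitions in Lemma~\ref{lem:vaccaro-local-tlfnd}. The approximating subalgebra \(C\) there need not contain \(1_A\), but Definition~\ref{def:tlfnd} does not require it to. The unitization step is handled inside the proof of Lemma~\ref{lem:winter-centralization}, which passes to \(C^*(C_j,1_A)\).
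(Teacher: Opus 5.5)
Your proposal is correct and follows the paper's proof step for step. The sequence is: no nonzero finite-dimensional representations from simplicity and non-elementarity, then real rank zero of \(A^{\U}\) from Theorem~\ref{thm:vaccaro-local-rr0}, then tracially locally finite nuclear dimension from Lemma~\ref{lem:vaccaro-local-tlfnd}, and finally Theorem~\ref{thm:VW-bridge}. Your extra justification of the first step is accurate, as is your remark that the nonunital approximating subalgebra is unitized inside Lemma~\ref{lem:winter-centralization}.
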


\begin{proof}
Since \(A\) is simple and non-elementary, it has no nonzero
finite-dimensional representations.  By
Theorem~\ref{thm:vaccaro-local-rr0}, the uniform tracial ultrapower
\(A^{\U}\) has real rank zero, and by
Lemma~\ref{lem:vaccaro-local-tlfnd}, \(A\) has tracially locally finite
nuclear dimension.  Theorem~\ref{thm:VW-bridge} therefore gives
uniform property \(\Gamma\).
\end{proof}

\section{Unital local recursive subhomogeneous approximation}
\label{sec:rsh-local}

In this final section we prove that Theorem~\ref{thm:local-gamma} generalizes to the situation where our local models are now recursive subhomogeneous algebras (RSH) instead of merely homogeneous ones.  This class is significantly larger, witnessed for instance by the presence of algebras which contain no nontrivial projections.  We note that the clutching \(*\)-homomorphisms that form part of the defining data for an RSH algebra need not have any special form.  The only extra ingredient beyond the homogeneous proof is a relative form of the same rank-avoidance argument, applied one pullback block at a time.

\subsection{Recursive subhomogeneous decompositions}

We use the following concrete version of recursive subhomogeneous decomposition originally introduced in \cite{PhillipsRSH}.  A \emph{unital RSH decomposition} of length \(\ell\) is a sequence of C$^*$-algebras
\[
        R^{(0)},R^{(1)},\ldots,R^{(\ell)}
\]
constructed as follows.  First
\(R^{(0)}=C(X_0,M_{r_0})\)
for a compact metrizable finite-dimensional space \(X_0\).  Having constructed \(R^{(j-1)}\), choose a compact metrizable finite-dimensional space \(X_j\), a closed subset \(Y_j\subseteq X_j\), an integer \(r_j\geq 1\), and a unital \(*\)-homomorphism
\[
        \varphi_j\colon R^{(j-1)}\longrightarrow C(Y_j,M_{r_j}).
\]
Then set
\[
        R^{(j)}=
        \big\{(s,f)\in R^{(j-1)}\oplus C(X_j,M_{r_j}):
        \varphi_j(s)=f|_{Y_j}\big\}.
\]
The final algebra \(R=R^{(\ell)}\) is called a unital RSH algebra with
respect to this chosen decomposition.  We set \(Y_0=\emptyset\).  For
\(0\leq j\leq\ell\), the \(j\)-th \emph{block} of the decomposition is
the coordinate algebra \(C(X_j,M_{r_j})\), regarded together with the
associated triple \((X_j,Y_j,r_j)\).  Whenever we refer to
\(C(X_j,M_{r_j})\) as a block, this triple is understood to be part of
the block data.  For \(a\in R\), we write
\(a_j\in C(X_j,M_{r_j})\) for its coordinate on the \(j\)-th block.
Throughout this section, every finite-dimensional representation of a
unital algebra is understood to be unital, equivalently nondegenerate.

The estimates below are decomposition-dependent.  This is deliberate: the local approximation hypothesis supplies the approximating RSH algebras together with decompositions for which the dimension estimate holds.

\begin{defn}
\label{def:blockwise-rsh-subquadratic}
Let \(0<\theta<1\).  A chosen unital RSH decomposition satisfies the \emph{\(\theta\)-subquadratic block estimate} if
\[
        \dim X_j < \lceil \theta r_j\rceil^2
\]
for every block \(C(X_j,M_{r_j})\).
\end{defn}

\begin{rmk}
The inequality
\[
        \max_j \frac{\dim X_j}{r_j^2} < \theta^2
\]
gives \(\dim(X_j) < {\lceil\theta r_j\rceil}^2\) as required for the codimension estimate and does not
represent a stronger asymptotic hypothesis.
\end{rmk}

\begin{defn}
\label{def:rsh-subquadratic-growth}
A \emph{unital RSH inductive-limit decomposition} of a unital
\(C^*\)-algebra \(A\) is an isomorphism
\[
        A\cong\varinjlim(A_n,\phi_n),
\]
where the connecting maps are unital and injective and each \(A_n\)
is equipped with a chosen unital RSH decomposition.
Write
\[
        \big\{C(X_{n,j},M_{r_{n,j}}):0\leq j\leq \ell_n\big\}
\]
for the blocks in the chosen decomposition of \(A_n\), and set
\[
        q_n=
        \max_{0\leq j\leq \ell_n}
        \frac{\dim X_{n,j}}{r_{n,j}^2}.
\]
We say that the displayed inductive-limit decomposition has
\emph{subquadratic RSH dimension growth} if
\[
        \liminf_{n\to\infty}q_n=0.
\]
A unital \(C^*\)-algebra has subquadratic RSH dimension growth if it
admits such a decomposition.  For ASH algebras, we also refer to this
simply as \emph{subquadratic growth}.  Since \(Y_j=\emptyset\) gives
\[
R^{(j)}=R^{(j-1)}\oplus C(X_j,M_{r_j}),
\]
direct sums are inherently handled by this definition.
\end{defn}

\subsection{Relative rank avoidance}

Here we isolate the relative transversality result needed to handle pullbacks.

\begin{prop}
\label{prop:relative-compact-rank-avoidance}
Let \(Y\subseteq X\) be a closed subset of a compact metrizable space of finite covering dimension \(d\).  Let \(r\in\N\), let \(1\leq k\leq r\), and let \(\Lambda\subset(-1,1)\) be finite.  Suppose \(d<k^2\).  Let \(f\in C(X,M_r)_{\mathrm{sa}}\) satisfy
\[
        \dim\ker(f(y)-\lambda 1_r)<k,
        \qquad y\in Y,\ \lambda\in\Lambda.
\]
Then, for every \(\varepsilon>0\), there is \(g\in C(X,M_r)_{\mathrm{sa}}\) such that
\[
        g|_Y=f|_Y,
        \qquad \|g-f\|<\varepsilon,
\]
and
\[
        \dim\ker(g(x)-\lambda 1_r)<k,
        \qquad x\in X,\ \lambda\in\Lambda.
\]
\end{prop}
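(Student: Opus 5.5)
The plan is to run the same stratum-by-stratum argument as in Proposition~\ref{prop:compact-bundle-rank-avoidance}, now for the trivial bundle \(X\times\C^r\), but using the \emph{relative} form of Jacob's bundle-valued avoidance theorem \cite[Theorem~5.1]{JacobDimAvoidance}. That form perturbs a section over \(X\) while keeping it fixed on a closed subset where it already avoids the bad locus. Enumerate \(\Lambda=\{\lambda_1,\ldots,\lambda_L\}\) and put \(N=L(r-k+1)\). Treat the pairs \((\ell,j)\) with \(1\leq\ell\leq L\) and \(r\geq j\geq k\) in the same order as before: increasing \(\ell\), and for fixed \(\ell\), decreasing \(j\). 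At each step we will have a section \(g_{\text{cur}}\) agreeing with \(f\) on \(Y\) and within \(\varepsilon(\text{step})/N\) of the previous section.

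The obstacle is that the previously avoided loci need not be avoided by the current section near \(Y\) \emph{a priori}; the avoidance must hold on all of \(X\) at each step. So the first step has to move the section into the open set \(V_{1,r}=M_r(\C)_{\mathrm{sa}}\setminus D_{\lambda_1,r+1}=M_r(\C)_{\mathrm{sa}}\), which is trivial. Inductively, suppose the current section \(g'\) takes values in
\[
V_{\ell,j}=M_r(\C)_{\mathrm{sa}}\setminus\Bigl(\bigcup_{h<\ell}D_{\lambda_h,k}\cup D_{\lambda_\ell,j+1}\Bigr)
\]
over all of \(X\), and \(g'|_Y=f|_Y\). Since \(f(y)\) has nullity \(<k\le j\) at every \(\lambda\in\Lambda\), the restriction \(g'|_Y\) already avoids \(S_{\lambda_\ell,j}\). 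The set \(S_{\lambda_\ell,j}\cap V_{\ell,j}\) is closed in \(V_{\ell,j}\), and is a smooth embedded submanifold of codimension \(j^2\geq k^2>d\) by Lemma~\ref{lem:rank-defect-codim}. Every open \(U\subseteq X\) is normal with \(\dim U\le d\), so \cite[Theorem~3.4]{JacobDimAvoidance} gives \(U\)-avoidability. The relative version of \cite[Theorem~5.1]{JacobDimAvoidance} over the closed set \(Y\) then yields a section \(g''\) with values in \(V_{\ell,j}\setminus S_{\lambda_\ell,j}=V_{\ell,j+1}\) (or \(V_{\ell+1,r}\) once \(j=k\)), with \(g''|_Y=g'|_Y\) and \(\|g''-g'\|<\varepsilon/N\). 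Staying inside the open subfibration preserves all previous avoidance properties, exactly as in the absolute case.

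After \(N\) steps, the resulting \(g\) satisfies \(\|g-f\|<\varepsilon\), \(g|_Y=f|_Y\), and \(\dim\ker(g(x)-\lambda1_r)<k\) for all \(x\in X\) and \(\lambda\in\Lambda\). Note that, unlike Proposition~\ref{prop:compact-bundle-rank-avoidance}, no contraction or clipping step is required, since \(f\) is not assumed contractive. If a contractive version is wanted later, clipping by \(\chi\) preserves both the kernel condition (because \(\Lambda\subset(-1,1)\)) and the boundary values provided \(f\) is contractive.

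The main point to check is that Jacob's relative theorem really applies. It requires the fixed section on \(Y\) to already lie in the complement of the current bad closed subfibration, and this is exactly what the hypothesis on \(f|_Y\) supplies, at every stage. If only an absolute version were available, the fallback would be to first extend the avoidance from a closed neighbourhood of \(Y\) and then glue. The approach would use a Tietze-type argument and a partition of unity \(\psi\) with \(g=\psi f+(1-\psi)h\). However, convex combinations do not respect the nonconvex avoidance condition, so I expect to rely on the relative statement rather than on gluing.
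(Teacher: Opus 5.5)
Your proposal is correct and follows essentially the same route as the paper: the same ordering of exact-nullity strata through the open sets \(V_{\ell,j}\), Jacob's Theorem~3.4 for \(U\)-avoidability, and the relative clause of his Theorem~5.1 over \(Y\). That clause applies because \(f|_Y\) avoids every \(D_{\lambda,k}\) and all earlier perturbations fix \(Y\); the paper additionally notes that compact metrizability makes \(X\) paracompact and perfectly normal, as the clause requires. One small indexing slip: since \(j\) decreases, \(V_{\ell,j}\setminus S_{\lambda_\ell,j}\) equals \(V_{\ell,j-1}\), not \(V_{\ell,j+1}\).
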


\begin{proof}
If \(\Lambda=\emptyset\), take \(g=f\). Otherwise, for
\(\lambda\in\Lambda\) and \(1\leq j\leq r\), put
\[
        D_{\lambda,j}
        =
        \{a\in M_r(\C)_{\mathrm{sa}}:
        \dim\ker(a-\lambda1_r)\geq j\},
\]
and
\[
        S_{\lambda,j}
        =
        D_{\lambda,j}\setminus D_{\lambda,j+1},
        \qquad
        D_{\lambda,r+1}=\emptyset.
\]
Enumerate
\[
        \Lambda=\{\lambda_1,\ldots,\lambda_L\},
\]
put \(N=L(r-k+1)\), and use the same order of exact-nullity strata as
in the proof of
Proposition~\ref{prop:compact-bundle-rank-avoidance}.

At the step \((\ell,j)\), the current section takes values in the open
manifold
\[
        V_{\ell,j}
        =
        M_r(\C)_{\mathrm{sa}}
        \setminus
        \left(
        \bigcup_{h<\ell}D_{\lambda_h,k}
        \cup D_{\lambda_\ell,j+1}
        \right).
\]
The current bad set
\[
        S_{\lambda_\ell,j}\cap V_{\ell,j}
\]
is closed and smooth in \(V_{\ell,j}\), with Lipschitz codimension
\[
        j^2\geq k^2>d.
\]

Every open subset \(U\subseteq X\) is metrizable and normal and has
covering dimension at most \(d\). Hence
\cite[Theorem~3.4]{JacobDimAvoidance} makes the current bad set
\(U\)-avoidable. Moreover, compact metrizability makes \(X\)
paracompact and perfectly normal. The relative clause of
\cite[Theorem~5.1]{JacobDimAvoidance} therefore gives a perturbation
which avoids the current stratum and agrees with the current section
on \(Y\).

This relative clause is applicable at every step: the original
boundary values avoid every locus \(D_{\lambda,k}\), and all preceding
perturbations have been fixed on \(Y\). Taking the perturbation at each
of the \(N\) steps to have norm less than
\(\varepsilon/(2N)\) yields a final section
\(g\in C(X,M_r)_{\mathrm{sa}}\) satisfying
\[
        g|_Y=f|_Y,
        \qquad
        \|g-f\|<\varepsilon,
\]
and
\[
        \dim\ker(g(x)-\lambda1_r)<k,
        \qquad
        x\in X,\ \lambda\in\Lambda.
\]
\end{proof}

We shall also use the following easy clipping estimate.

\begin{lem}
\label{lem:clipping-rsh}
Let \(\chi\colon\R\to[-1,1]\) be \(\chi(t)=\max\{-1,\min\{t,1\}\}\).  If \(a,b\in M_r(\C)_{\mathrm{sa}}\), \(\|b\|\leq 1\), and \(\|a-b\|\leq \gamma\), then
\[
        \|a-\chi(a)\|\leq \gamma,
        \qquad
        \|\chi(a)-b\|\leq 2\gamma.
\]
Moreover, if \(\lambda\in(-1,1)\), then
\[
        \ker(\chi(a)-\lambda 1_r)=\ker(a-
        \lambda 1_r).
\]
The same assertions hold fiberwise for self-adjoint sections of a trivial matrix bundle.
\end{lem}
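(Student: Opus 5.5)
The plan is to reduce everything to the scalar statement via the spectral theorem, since \(a\) and \(\chi(a)\) are simultaneously diagonalizable. Write \(a=\sum_\mu \mu P_\mu\) with spectral projections \(P_\mu\). Then \(\chi(a)=\sum_\mu\chi(\mu)P_\mu\), so
\[
\|a-\chi(a)\|=\max_{\mu\in\sigma(a)}|\mu-\chi(\mu)|=\max_{\mu\in\sigma(a)}\operatorname{dist}(\mu,[-1,1]).
\]

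\textbf{First inequality.} The goal is \(\operatorname{dist}(\mu,[-1,1])\leq\gamma\) for every eigenvalue \(\mu\) of \(a\). Weyl's inequality (the min--max estimate already quoted in Corollary~\ref{cor:compact-tracial-rank-avoidance}) gives \(|s_j(a)-s_j(b)|\leq\|a-b\|\leq\gamma\) for the ordered eigenvalues. Since \(\sigma(b)\subseteq[-1,1]\), every eigenvalue of \(a\) lies within \(\gamma\) of \([-1,1]\). An alternative route: \(\sigma(a)\subseteq[-\|a\|,\|a\|]\) and \(\|a\|\leq\|b\|+\gamma\leq1+\gamma\), so \(\mu\leq 1+\gamma\) and \(\mu\geq-1-\gamma\), which already gives the bound. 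This yields \(\|a-\chi(a)\|\leq\gamma\).

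\textbf{Second inequality.} By the triangle inequality,
\[
\|\chi(a)-b\|\leq\|\chi(a)-a\|+\|a-b\|\leq2\gamma.
\]

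\textbf{Kernel identity.} Because \(\chi\) is the identity on \([-1,1]\) and takes only the values \(\pm1\) outside it, we have \(\chi(\mu)=\lambda\in(-1,1)\) if and only if \(\mu=\lambda\). Hence the \(\lambda\)-eigenspace of \(\chi(a)\) is \(\sum_{\chi(\mu)=\lambda}P_\mu=P_\lambda\), which is exactly \(\ker(a-\lambda1_r)\). This is the same observation used at the end of the proof of Proposition~\ref{prop:compact-bundle-rank-avoidance}.

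\textbf{Fiberwise version.} For a self-adjoint section \(a\) of a trivial matrix bundle, continuous functional calculus is computed pointwise: \(\chi(a)(x)=\chi(a(x))\), and the sup norm is the supremum of the fiber norms. Applying the matrix statements at each \(x\), with \(\gamma\geq\sup_x\|a(x)-b(x)\|\), and taking suprema gives both inequalities; the kernel identity holds fiber by fiber.

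There is no real obstacle here. The only point needing care is that \(\chi\) is continuous but not injective, and injectivity on \(\chi^{-1}((-1,1))\) is exactly what the kernel statement requires.
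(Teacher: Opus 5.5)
Your proof is correct and follows essentially the same route as the paper. The paper likewise bounds $\sigma(a)\subseteq[-1-\gamma,1+\gamma]$ via $\|a\|\le 1+\gamma$ (your ``alternative route''), applies functional calculus to get $\|a-\chi(a)\|\le\gamma$, uses the triangle inequality for the second estimate, and reads off the kernel identity from $\chi(t)=\lambda\iff t=\lambda$ for $\lambda\in(-1,1)$; your Weyl-inequality route is also valid, but the norm bound alone already suffices.
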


\begin{proof}
Since \(\|b\|\leq1\) and \(\|a-b\|\leq\gamma\), the spectrum of \(a\) is contained in \([-1-\gamma,1+\gamma]\).  Hence \(|t-\chi(t)|\leq\gamma\) on \(\sigma(a)\), and functional calculus gives \(\|a-\chi(a)\|\leq\gamma\).  The second estimate follows from the triangle inequality.  Finally, for \(\lambda\in(-1,1)\), the scalar equality \(\chi(t)=\lambda\) holds if and only if \(t=\lambda\), so the kernel identity follows from functional calculus.
\end{proof}

\subsection{Rank avoidance in a unital RSH algebra}

The next theorem is the RSH replacement for the homogeneous rank-avoidance step.  The conclusion is stated for all finite-dimensional representations because this is exactly what is needed at the boundary of the next pullback block.

\begin{thm}
\label{thm:rsh-rank-avoidance}
Let \(R\) be a unital RSH algebra with a chosen decomposition, and let \(\Lambda\subset(-1,1)\) be finite.  Fix \(0<\theta<1\), and suppose that the decomposition satisfies
\[
        \dim X_j < \lceil \theta r_j\rceil^2
\]
for every block \(C(X_j,M_{r_j})\).  Then, for every \(a\in R_{1,\mathrm{sa}}\) and every \(\varepsilon>0\), there exists \(b\in R_{1,\mathrm{sa}}\) such that \(\|a-b\|<\varepsilon\) and
\[
        \dim\ker(\pi(b)-\lambda 1_{\dim\pi})
        < \theta\dim\pi
\]
for every finite-dimensional representation \(\pi\) of \(R\) and every \(\lambda\in\Lambda\).  In particular,
\[
        \dim\ker(b_j(x)-\lambda 1_{r_j})
        < \lceil \theta r_j\rceil,
        \qquad x\in X_j,\ \lambda\in\Lambda,
\]
for every block.
\end{thm}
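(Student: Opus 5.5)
Put \(k_j=\lceil\theta r_j\rceil\); the hypothesis is exactly \(\dim X_j<k_j^2\), with \(1\leq k_j\leq r_j\) because \(0<\theta<1\). The plan is to construct \(b\) by induction on the length \(\ell\) of the chosen decomposition, perturbing one block at a time with Proposition~\ref{prop:relative-compact-rank-avoidance}. We track a single running statement: \(b^{(j)}\in R^{(j)}_{1,\mathrm{sa}}\) is within \(\varepsilon_j\) of the image of \(a\) in \(R^{(j)}\), and it satisfies the representation estimate \(\dim\ker(\pi(b^{(j)})-\lambda)<\theta\dim\pi\) for every finite-dimensional representation \(\pi\) of \(R^{(j)}\). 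We fix tolerances \(\varepsilon_j\) with \(\varepsilon_\ell<\varepsilon\), and we choose the step errors so that they do not accumulate. This is possible because each new block only changes the new coordinate, so the earlier coordinates are never revisited.

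For the base case \(R^{(0)}=C(X_0,M_{r_0})\), apply Proposition~\ref{prop:relative-compact-rank-avoidance} with \(Y=\emptyset\) and \(k=k_0\). Then clip using Lemma~\ref{lem:clipping-rsh}. This gives a contraction \(b_0\) with \(\dim\ker(b_0(x)-\lambda)<k_0\). The estimate must be upgraded to all finite-dimensional representations. Every finite-dimensional representation of \(R^{(j)}\) is unitarily equivalent to a direct sum of point evaluations at points of the blocks. For a pullback, the irreducibles are the irreducibles of \(R^{(j-1)}\) together with evaluations at \(x\in X_j\setminus Y_j\). For a direct sum \(\pi=\bigoplus_s\pi_s\), kernel dimensions add. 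It therefore suffices to check the irreducible pieces, and the strict inequality then survives summation. So the whole theorem reduces to showing \(\dim\ker(\mathrm{ev}_x(b)-\lambda)<\theta r_j\) at block points. Since the kernel dimension is an integer, the bound \(<k_j=\lceil\theta r_j\rceil\) gives \(\leq k_j-1<\theta r_j\). This is the reason for using the ceiling.

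For the inductive step, suppose \(b'\in R^{(j-1)}\) has been built. The coordinate \(a_j\) satisfies \(a_j|_{Y_j}=\varphi_j(a')\). We proceed as follows.
\begin{enumerate}[(1)]
\item Set \(f_0=a_j+\widetilde{h}\), where \(\widetilde h\) is a self-adjoint extension to \(X_j\) of \(\varphi_j(b')-\varphi_j(a')\in C(Y_j,M_{r_j})\). We obtain it by Tietze, taking self-adjoint parts and clipping its norm, so that \(\|\widetilde h\|\leq\|b'-a'\|\). Then \(f_0|_{Y_j}=\varphi_j(b')\).
\item At each \(y\in Y_j\), the map \(\mathrm{ev}_y\circ\varphi_j\) is a unital \(r_j\)-dimensional representation of \(R^{(j-1)}\). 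By the inductive hypothesis, \(\dim\ker(f_0(y)-\lambda)<\theta r_j\), hence \(<k_j\). This is precisely why the statement is phrased for all finite-dimensional representations.
\item Apply Proposition~\ref{prop:relative-compact-rank-avoidance} to \(f_0\) with \(k=k_j\) to get \(g\) with \(g|_{Y_j}=f_0|_{Y_j}\), a small perturbation, and the kernel bound on all of \(X_j\).
\item Clip \(g\) by \(\chi\). Since \(\varphi_j(b')\) is already a contraction, clipping leaves \(Y_j\) unchanged, so \((b',\chi(g))\in R^{(j)}\). By Lemma~\ref{lem:clipping-rsh} the kernels at \(\lambda\in\Lambda\) are unchanged and the norm error at most doubles.
\end{enumerate}
The norm of \(b^{(j)}-a^{(j)}\) is the maximum of \(\|b'-a'\|\) and the new coordinate error. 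That error is at most about \(2(\|b'-a'\|+\text{small})\).

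The main obstacle is exactly this error bookkeeping. The extension step together with the clipping doubling could make the errors grow geometrically in \(\ell\). To avoid this, I would use the fact that \(\ell\) is finite and fixed. Choose \(\varepsilon_0\ll\varepsilon_1\ll\cdots\ll\varepsilon_\ell=\varepsilon/2\) backwards, for instance \(\varepsilon_{j-1}=\varepsilon_j/4\), with each relative perturbation tolerance \(\varepsilon_j/4\). Alternatively, avoid the doubling altogether by extending \(\varphi_j(b')-\varphi_j(a')\) with norm control and clipping only the final perturbation, using \(\|\chi(g)-a_j\|\leq2\|g-a_j\|\) directly. Either way the final bound is \(\|a-b\|<\varepsilon\).
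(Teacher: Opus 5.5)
Your proposal is correct and follows essentially the same route as the paper. Both arguments use induction on the length of the decomposition, a norm-controlled Tietze extension of the boundary discrepancy $\varphi_j(b')-\varphi_j(a')$, and Proposition~\ref{prop:relative-compact-rank-avoidance}, with its boundary hypothesis supplied by the representation estimate for $\mathrm{ev}_y\circ\varphi_j$; they then clip via Lemma~\ref{lem:clipping-rsh} and finish with the ideal--quotient description of the irreducible representations of the pullback. Your backward choice $\varepsilon_{j-1}=\varepsilon_j/4$ is exactly the paper's device of invoking the inductive hypothesis with tolerance $\gamma<\varepsilon/4$ (your closing ``alternatively'' does not really remove the factor $2$ from clipping, but since the length is finite this does not matter).
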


\begin{proof}
We prove the theorem by induction on the length of the chosen RSH decomposition.

First suppose \(R=C(X,M_r)\).  Put \(k=\lceil\theta r\rceil\) and let \(a \in C(X,M_r)_{1,\mathrm{sa}}\) be given.  By hypothesis, \(\dim X<k^2\).  Proposition~\ref{prop:relative-compact-rank-avoidance}, with \(Y=\emptyset\), gives \(c\in C(X,M_r)_{\mathrm{sa}}\) with \(\|a-c\|<\varepsilon/3\) and
\[
        \dim\ker(c(x)-\lambda1_r)<k,
        \qquad x\in X,\ \lambda\in\Lambda.
\]
Since \(a\) is a contraction, Lemma~\ref{lem:clipping-rsh} gives \(b=\chi(c)\in R_{1,\mathrm{sa}}\), with \(\|a-b\|<\varepsilon\).  Note that the clipping estimate of Lemma~\ref{lem:clipping-rsh} preserves the kernel size estimate above.  Since the kernel dimension is an integer and is strictly smaller than \(k=\lceil\theta r\rceil\), it is at most \(k-1<\theta r\).

Every finite-dimensional representation of \(C(X,M_r)\) is a finite direct sum of point evaluations.  The preceding estimate is therefore additive over the summands and gives
\[
        \dim\ker(\pi(b)-\lambda1_{\dim\pi})<\theta\dim\pi
\]
for every finite-dimensional representation \(\pi\).

Now assume the result for decompositions of length \(\ell-1\), and write the last pullback as
\[
        R=S\oplus_{C(Y,M_r)}C(X,M_r),
\]
where \(S\) has length \(\ell-1\), the attaching map is \(\varphi\colon S\to C(Y,M_r)\), and \(\dim X<\lceil\theta r\rceil^2\).  Let \(a=(s,h)\in R_{1,\mathrm{sa}}\) so that \(h|_Y = \varphi(s)\), and fix \(\gamma < \varepsilon/4\).

By the induction hypothesis, there is \(s'\in S_{1,\mathrm{sa}}\) such that \(\|s-s'\|<\gamma\) and
\[
        \dim\ker(\rho(s')-\lambda1_{\dim\rho})
        <\theta\dim\rho
\]
for every finite-dimensional representation \(\rho\) of \(S\) and every \(\lambda\in\Lambda\).  On \(Y\), the boundary difference
\[
        \varphi(s'-s)\in C(Y,M_r)_{\mathrm{sa}}
\]
has norm at most \(\gamma\).  Extend its real matrix-coordinate functions to \(X\) by the scalar
Tietze extension theorem, obtaining
\(\widetilde d\in C(X,M_r)_{\mathrm{sa}}\).  Composing pointwise with
the radial retraction of the real vector space \(M_r(\C)_{\mathrm{sa}}\)
onto its closed ball of radius \(\gamma\) gives an extension
\(d\in C(X,M_r)_{\mathrm{sa}}\) with \(\|d\|\leq\gamma\); the radial
retraction fixes the prescribed boundary values.  Put
\(h_0=h+d\).
Then \(h_0|_Y=\varphi(s')\) and \(\|h_0-h\|\leq\gamma\).

Set \(k=\lceil\theta r\rceil\).  For each \(y\in Y\), the map
\(\operatorname{ev}_y\circ\varphi\colon S\to M_r\) is an
\(r\)-dimensional representation of \(S\).  Hence, for every
\(\lambda\in\Lambda\), the induction hypothesis gives
\[
        \dim\ker(h_0(y)-\lambda1_r)
        =
        \dim\ker\bigl((\operatorname{ev}_y\circ\varphi)(s')
        -\lambda1_r\bigr)
        <\theta r\leq k.
\]
Thus \(h_0|_Y\) satisfies the boundary hypothesis of
Proposition~\ref{prop:relative-compact-rank-avoidance}.

Apply Proposition~\ref{prop:relative-compact-rank-avoidance} to \((X,Y)\), the section \(h_0\), the integer \(k\), and tolerance \(\gamma\).  We obtain \(h_1\in C(X,M_r)_{\mathrm{sa}}\) such that
\[
        h_1|_Y=h_0|_Y=\varphi(s'),
        \qquad \|h_1-h_0\|<\gamma,
\]
and
\[
        \dim\ker(h_1(x)-\lambda1_r)<k,
        \qquad x\in X,\ \lambda\in\Lambda.
\]
Now set \(h'=\chi(h_1)\).  Since \(h_1|_Y=\varphi(s')\) and \(\varphi(s')\) is a self-adjoint contraction, the clipping function fixes the boundary values, so
\(h'|_Y=\varphi(s')\).
Also \(h'\) is a self-adjoint contraction.  By Lemma~\ref{lem:clipping-rsh}, the kernel estimates at the levels \(\lambda\in\Lambda\) are unchanged.  Moreover, \(\|h_1-h\|<2\gamma\), and the same lemma gives \(\|h'-h\|<4\gamma\).  Since \(\gamma<\varepsilon/4\), the element
\[
        b=(s',h')\in S\oplus_{C(Y,M_r)}C(X,M_r)=R
\]
is a self-adjoint contraction and satisfies \(\|a-b\|<\varepsilon\).

It remains only to check the representation estimate for \(R\). The map
\[
        \kappa\colon R\longrightarrow S,
        \qquad
        \kappa(s_0,h_0)=s_0,
\]
is surjective by the Tietze extension theorem, and
\[
        \ker(\kappa)
        =
        0\oplus C_0(X\setminus Y,M_r).
\]
Consequently, an irreducible representation of \(R\) either annihilates
this ideal, in which case it has the form
\(\rho\circ\kappa\)
for an irreducible representation \(\rho\) of \(S\), or is unitarily
equivalent to a point evaluation
\[
        (s_0,h_0)\longmapsto h_0(x)
\]
at some \(x\in X\setminus Y\). This is the standard ideal--quotient
description of the irreducible representations of an RSH pullback
\cite{PhillipsRSH}.

The first class satisfies the desired estimate by the induction
hypothesis. For the second class,
\[
        \dim\ker(h'(x)-\lambda1_r)
        \leq\lceil\theta r\rceil-1
        <\theta r.
\]
Now let \(\pi\) be an arbitrary finite-dimensional representation of \(R\).
Write, up to unitary equivalence,
\[
        \pi\cong \pi_1\oplus\cdots\oplus\pi_m
\]
with the \(\pi_i\) irreducible.  The preceding two cases give
\[
        \dim\ker(\pi_i(b)-\lambda1_{\dim\pi_i})
        <\theta\dim\pi_i,
        \qquad i=1,\ldots,m.
\]
Since kernels of block diagonal operators are direct sums of the kernels of
the blocks,
\[
\begin{aligned}
        \dim\ker(\pi(b)-\lambda1_{\dim\pi})
        &=\sum_{i=1}^m
          \dim\ker(\pi_i(b)-\lambda1_{\dim\pi_i})\\
        &< \theta\sum_{i=1}^m\dim\pi_i
         = \theta\dim\pi.
\end{aligned}
\]
This proves the induction step and hence the theorem.

\end{proof}

\begin{cor}
\label{cor:rsh-tracial-smallness}
Let \(R\), \(\Lambda\), and \(\theta\) be as in
Theorem~\ref{thm:rsh-rank-avoidance}.  Then, for every
\(a\in R_{1,\mathrm{sa}}\) and every \(\varepsilon>0\), there are
\(b\in R_{1,\mathrm{sa}}\) and \(\delta>0\) such that
\(\|a-b\|<\varepsilon\) and
\[
        \rho\big(\eta_\delta(b-\lambda)\big)<\theta,
        \qquad \rho\in T(R),\ \lambda\in\Lambda.
\]
\end{cor}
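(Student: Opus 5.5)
We apply Theorem~\ref{thm:rsh-rank-avoidance} to \(a\) and \(\varepsilon\), obtaining \(b\in R_{1,\mathrm{sa}}\) with \(\|a-b\|<\varepsilon\) and the block estimates
\[
        \dim\ker(b_j(x)-\lambda1_{r_j})\leq\lceil\theta r_j\rceil-1<\theta r_j,
        \qquad x\in X_j,\ \lambda\in\Lambda,
\]
for every block. The strict inequality \(\lceil\theta r_j\rceil-1<\theta r_j\) leaves a gap we will need, so we record the quantity
\[
        \theta'=\max_{0\leq j\leq\ell}\frac{\lceil\theta r_j\rceil-1}{r_j}<\theta,
\]
which is a maximum over finitely many blocks. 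If \(\Lambda=\emptyset\) any \(\delta\) works, so we assume \(\Lambda\neq\emptyset\).

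Next we choose \(\delta\) blockwise, as in Corollary~\ref{cor:compact-tracial-rank-avoidance}. For each block \(j\) and each \(\lambda\in\Lambda\), let \(\alpha_{j,\lambda}(x)\) be the \(k_j\)-th smallest eigenvalue of \(|b_j(x)-\lambda1_{r_j}|\), where \(k_j=\lceil\theta r_j\rceil\). By the kernel estimate this number is strictly positive, and it is continuous in \(x\) by the min--max Lipschitz bound. Since \(X_j\) is compact, it has a positive minimum. We take \(\delta>0\) smaller than all of these finitely many minima. Then \(\eta_\delta(b_j(x)-\lambda)\) is supported on eigenvalues of \(|b_j(x)-\lambda|\) below \(\delta\), so it has rank at most \(k_j-1\). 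Hence
\[
        \tr_{r_j}\big(\eta_\delta(b_j(x)-\lambda)\big)\leq\theta',
        \qquad x\in X_j,\ \lambda\in\Lambda.
\]

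It remains to pass from points to arbitrary traces on \(R\). Every irreducible representation of \(R\) is unitarily equivalent to a point evaluation \(x\mapsto b_j(x)\) at some \(x\in X_j\setminus Y_j\); this is the ideal--quotient description used in the proof of Theorem~\ref{thm:rsh-rank-avoidance}, iterated over the blocks. Extreme traces of a subhomogeneous algebra factor through finite-dimensional irreducible representations, so every extreme point of \(T(R)\) is \(\tr_{r_j}\circ\operatorname{ev}_x\) for some block \(j\) and some \(x\in X_j\). The map \(\rho\mapsto\rho(\eta_\delta(b-\lambda))\) is affine and weak-* continuous on the compact convex set \(T(R)\). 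By Krein--Milman and the bound at extreme points, we get \(\rho(\eta_\delta(b-\lambda))\leq\theta'<\theta\) for all \(\rho\in T(R)\).

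The main obstacle is this last step: justifying that the extreme traces are exactly the normalized point evaluations. The alternative is to pass through the fact that \(R\) embeds in \(\bigoplus_j C(X_j,M_{r_j})\) and that every trace on \(R\) is a limit of convex combinations of those point traces. The key point in either route is that the uniform gap \(\theta'<\theta\) makes the strict inequality survive taking closures.
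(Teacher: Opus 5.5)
Your proposal is correct and follows essentially the same argument as the paper. Both proofs choose $\delta$ from the strictly positive continuous $k_j$-th eigenvalue of $|b_j(x)-\lambda 1_{r_j}|$, use the same uniform gap $\max_j(\lceil\theta r_j\rceil-1)/r_j<\theta$, and extend the bound to all of $T(R)$ by checking it at extreme traces, which are normalized matrix traces composed with finite-dimensional irreducible (point-evaluation) representations. The differences are cosmetic: you invoke Krein--Milman where the paper uses that a continuous affine function attains its maximum at an extreme point, and you only need the direction ``extreme traces are point traces,'' not the ``exactly'' you flag as the main obstacle.
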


\begin{proof}
Apply Theorem~\ref{thm:rsh-rank-avoidance} to obtain \(b\) with the
stated norm estimate and finite-dimensional representation estimate.
If \(\Lambda=\emptyset\), any \(\delta>0\) finishes the proof.  For a
block \(C(X_j,M_{r_j})\), put \(k_j=\lceil\theta r_j\rceil\).  For fixed
\(j\) and \(\lambda\), the \(k_j\)-th eigenvalue of
\(|b_j(x)-\lambda1_{r_j}|\), listed in nondecreasing order, is a
strictly positive continuous function of \(x\in X_j\).  Since there
are only finitely many blocks and finitely many \(\lambda\)'s, we may
choose \(\delta>0\) such that
\[
        \rank\big(\eta_\delta(b_j(x)-\lambda)\big)
        \leq k_j-1<\theta r_j
\]
for every block \(C(X_j,M_{r_j})\), every \(x\in X_j\), and every \(\lambda\in\Lambda\).

Put
\[
        \theta_0=\max_j\frac{k_j-1}{r_j}.
\]
Since there are only finitely many blocks and
\[
        \frac{k_j-1}{r_j}<\theta
\]
for every \(j\), we have \(\theta_0<\theta\). Thus every irreducible representation \(\pi\) of \(R\) satisfies
\[
        \tr_{\dim\pi}\big(\eta_\delta(\pi(b)-\lambda1_{\dim\pi})\big)
        \leq\theta_0,
        \qquad \lambda\in\Lambda.
\]
For fixed \(\lambda\in\Lambda\), the function
\[
        T(R)\longrightarrow\R,
        \qquad
        \rho\longmapsto
        \rho\big(\eta_\delta(b-\lambda)\big),
\]
is continuous and affine, so it attains its maximum at an extreme
tracial state.

If \(\rho\in\partial_eT(R)\), then \(\pi_\rho(R)''\) is a finite
factor. Since \(R\) is subhomogeneous, this factor is a matrix algebra.
Thus \(\rho\) is the normalized matrix trace composed with a
finite-dimensional factor representation of \(R\). The preceding
representation estimate gives
\[
        \rho\big(\eta_\delta(b-\lambda)\big)
        \leq\theta_0
\]
for every extreme trace. Taking the maximum over \(T(R)\) yields
\[
        \rho\big(\eta_\delta(b-\lambda)\big)
        \leq\theta_0<\theta,
        \qquad
        \rho\in T(R),\ \lambda\in\Lambda.
\]
This proves the corollary.
\end{proof}

\subsection{The local RSH theorem}

\begin{defn}
\label{def:unital-local-rsh-subquadratic}
Let \(A\) be a unital \(C^*\)-algebra with
\(T(A)\neq\emptyset\).  We say that \(A\) has
\emph{unital locally tracially subquadratic RSH approximation} if,
for every finite set \(\mathcal F\subset A_1\), every
\(\varepsilon>0\), and every \(0<\theta<1\), there is a unital
subalgebra \(C\subseteq A\), with \(1_C=1_A\), equipped with a
chosen unital RSH decomposition, such that
\begin{enumerate}[(1)]
\item every element of \(\mathcal F\) is within \(\varepsilon\) in
\(\|\cdot\|_{2,T(A)}\) of an element of \(C_1\);
\item the chosen decomposition of \(C\) satisfies the
\(\theta\)-subquadratic block estimate of
Definition~\ref{def:blockwise-rsh-subquadratic}.
\end{enumerate}
\end{defn}

\begin{thm}
\label{thm:local-rsh-rr0}
Let \(A\) be a unital \(C^*\)-algebra with \(T(A)\neq\emptyset\).  If \(A\) has unital locally tracially subquadratic RSH approximation, then \(A^{\U}\) has real rank zero.
\end{thm}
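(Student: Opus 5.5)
The plan is to verify property \((FG)\) of Definition~\ref{def:property-S} and then invoke Proposition~\ref{prop:S-criterion}, exactly as in the proof of Theorem~\ref{thm:local-rr0}, with Corollary~\ref{cor:rsh-tracial-smallness} replacing Corollary~\ref{cor:compact-tracial-rank-avoidance}. Fix a sequence \(a_n\in A_{1,\mathrm{sa}}\) and a finite set \(\Lambda\subset(-1,1)\), and choose \(\theta_n\in(0,1)\) with \(\theta_n\to0\).

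For each \(n\), apply Definition~\ref{def:unital-local-rsh-subquadratic} to \(\mathcal F=\{a_n\}\) with tolerance \(1/n\) and parameter \(\theta_n\). This produces a unital subalgebra \(C_n\subseteq A\) with \(1_{C_n}=1_A\), carrying a chosen RSH decomposition that satisfies the \(\theta_n\)-subquadratic block estimate, together with \(d_n\in(C_n)_1\) such that \(\|a_n-d_n\|_{2,T(A)}<1/n\).

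Next, replace \(d_n\) by its real part \(c_n=(d_n+d_n^*)/2\in(C_n)_{1,\mathrm{sa}}\). Since \(a_n\) is self-adjoint, \(\|a_n-c_n\|_{2,T(A)}<1/n\). Then apply Corollary~\ref{cor:rsh-tracial-smallness} in \(C_n\), with the given \(\Lambda\), the parameter \(\theta_n\), and tolerance \(1/n\). This yields \(b_n\in(C_n)_{1,\mathrm{sa}}\) and \(\delta_n>0\) with \(\|b_n-c_n\|<1/n\) and
\[
\rho\big(\eta_{\delta_n}(b_n-\lambda)\big)<\theta_n,\qquad \rho\in T(C_n),\ \lambda\in\Lambda .
\]
In particular \(\|a_n-b_n\|_{2,T(A)}<2/n\), so \((a_n-b_n)_n\in c_{\U}(A)\).

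It remains to transfer the trace estimate to \(A\). Because \(C_n\) is a unital subalgebra of \(A\) sharing the unit \(1_A\), every \(\tau\in T(A)\) restricts to a tracial state \(\tau|_{C_n}\in T(C_n)\). Continuous functional calculus of \(b_n-\lambda 1\) computed in \(C_n\) agrees with that computed in \(A\), again because the units coincide. Hence
\[
\sup_{\tau\in T(A)}\max_{\lambda\in\Lambda}\tau\big(\eta_{\delta_n}(b_n-\lambda)\big)\leq\theta_n\to0,
\]
which is property \((FG)\), and Proposition~\ref{prop:S-criterion} then gives \(\rr(A^{\U})=0\).

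Essentially all the difficulty is already absorbed in Theorem~\ref{thm:rsh-rank-avoidance} and Corollary~\ref{cor:rsh-tracial-smallness}, so the proof should be short. The one point needing care is this compatibility of restricted traces with functional calculus, and it is exactly why the definition insists on \(1_C=1_A\). No cutoff argument of the kind used in Lemma~\ref{lem:unit-cutoff} should be needed here.
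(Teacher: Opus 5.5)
Your proposal is correct and follows the paper's proof essentially line for line. Like the paper, you verify property \((FG)\) by applying Corollary~\ref{cor:rsh-tracial-smallness} inside \(C_n\) to the real part of the approximant, transfer the trace estimate to \(A\) by restriction using \(1_{C_n}=1_A\), and conclude with Proposition~\ref{prop:S-criterion}. The real-part step is justified by \(\|x^*\|_{2,T(A)}=\|x\|_{2,T(A)}\), which the paper states explicitly.
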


\begin{proof}
We verify property \((FG)\) from Definition~\ref{def:property-S}.  Let \(a_n\in A_{1,\mathrm{sa}}\) be a sequence, and let \(\Lambda\subset(-1,1)\) be finite.  Choose \(\theta_n\in(0,1)\) with \(\theta_n\to0\).

For each \(n\), apply
Definition~\ref{def:unital-local-rsh-subquadratic} to the finite set
\(\{a_n\}\), tolerance \(1/n\), and parameter \(\theta_n\).  We obtain
a unital RSH subalgebra \(C_n\subseteq A\), equipped with a chosen
decomposition satisfying the \(\theta_n\)-subquadratic block estimate,
and an element \(d_n\in(C_n)_1\) such that
\(\|a_n-d_n\|_{2,T(A)}<1/n\).
Put
\[
        c_n=\frac{d_n+d_n^*}{2}.
\]
Then \(c_n\in(C_n)_{1,\mathrm{sa}}\) and
\[
        \|a_n-c_n\|_{2,T(A)}
        \leq\|a_n-d_n\|_{2,T(A)}<1/n,
\]
since \(\|x^*\|_{2,T(A)}=\|x\|_{2,T(A)}\).
By Corollary~\ref{cor:rsh-tracial-smallness}, applied inside \(C_n\), there are \(b_n\in(C_n)_{1,\mathrm{sa}}\) and \(\delta_n>0\) such that \(\|b_n-c_n\|<1/n\) and
\[
        \rho\big(\eta_{\delta_n}(b_n-\lambda)\big)<\theta_n,
        \qquad \rho\in T(C_n),\ \lambda\in\Lambda.
\]
It follows that \(\|a_n-b_n\|_{2,T(A)}<2/n\), so \((a_n-b_n)_n\in c_{\U}(A)\).

For \(\tau\in T(A)\), the restriction \(\tau|_{C_n}\) is a tracial state on \(C_n\), because \(C_n\) is unital with the same unit as \(A\).  Hence
\[
        \tau\big(\eta_{\delta_n}(b_n-\lambda)\big)<\theta_n,
        \qquad \tau\in T(A),\ \lambda\in\Lambda.
\]
Thus
\[
        \max_{\lambda\in\Lambda}\sup_{\tau\in T(A)}
        \tau\big(\eta_{\delta_n}(b_n-\lambda)\big)
        \leq \theta_n\longrightarrow0.
\]
This gives property \((FG)\), hence \(\rr(A^{\U})=0\) by Proposition~\ref{prop:S-criterion}.
\end{proof}

\begin{lem}
\label{lem:local-rsh-tlfnd}
If \(A\) has unital locally tracially subquadratic RSH approximation, then \(A\) has tracially locally finite nuclear dimension.
\end{lem}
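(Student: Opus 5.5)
The plan follows the pattern of Lemma~\ref{lem:local-tlfnd}: the approximating subalgebras provided by Definition~\ref{def:unital-local-rsh-subquadratic} should themselves have finite nuclear dimension. Given a finite set \(\mathcal F\subset A_1\) and \(\varepsilon>0\), apply Definition~\ref{def:unital-local-rsh-subquadratic} with any \(\theta\in(0,1)\), for instance \(\theta=1/2\). This yields a unital subalgebra \(C\subseteq A\) with a chosen unital RSH decomposition \(R^{(0)},\ldots,R^{(\ell)}=C\), in which every \(X_j\) is compact metrizable of finite covering dimension, and elements of \(C_1\) approximating \(\mathcal F\) within \(\varepsilon\) in \(\|\cdot\|_{2,T(A)}\). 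The \(\theta\)-block estimate plays no role here. It then remains to show that \(C\) has finite nuclear dimension.

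I would argue by induction on the length \(\ell\) of the decomposition. For the base case, \(R^{(0)}=C(X_0,M_{r_0})\) has finite nuclear dimension by \cite[Propositions~2.3--2.5]{WinterZacharias:AIM}, since \(\dim X_0<\infty\) and finite nuclear dimension passes to matrix amplifications. For the inductive step, use the extension
\[
        0\longrightarrow C_0(X_j\setminus Y_j,M_{r_j})\longrightarrow R^{(j)}\xrightarrow{\ \kappa\ } R^{(j-1)}\longrightarrow0,
\]
which already appears in the proof of Theorem~\ref{thm:rsh-rank-avoidance}. There \(\kappa\) is surjective by Tietze extension, and its kernel is as displayed. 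The ideal is a hereditary subalgebra, indeed an ideal, of \(C(X_j,M_{r_j})\), so it has nuclear dimension at most \(\dim X_j\). The quotient \(R^{(j-1)}\) has finite nuclear dimension by the induction hypothesis. Finite nuclear dimension is preserved under extensions, with \(\dim_{\mathrm{nuc}}(R^{(j)})\leq\dim_{\mathrm{nuc}}(I)+\dim_{\mathrm{nuc}}(R^{(j)}/I)+1\), by \cite[Proposition~2.9]{WinterZacharias:AIM}. Applying this at each of the finitely many steps gives \(\dim_{\mathrm{nuc}}(C)<\infty\).

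An alternative route, if one prefers to avoid the extension result, is the bound \(\dim_{\mathrm{nuc}}\leq\) decomposition rank together with the known estimate for the decomposition rank of RSH algebras, \(\operatorname{dr}(R)\leq\max_j\dim X_j\). However, the extension argument is self-contained given the cited permanence properties.

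The main obstacle is only bookkeeping: one must make sure the extension permanence property is invoked correctly for the unital pullback structure, and that the kernel is identified correctly. Both points were already established in Theorem~\ref{thm:rsh-rank-avoidance}. With this in hand, \(C\) together with the chosen approximants satisfies Definition~\ref{def:tlfnd}, which proves the lemma.
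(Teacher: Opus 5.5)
Your proof is correct, but your main route differs from the paper's. The paper uses what you list as the alternative: it cites Winter's theorem that an RSH algebra over finite-dimensional spaces has finite decomposition rank \cite[Theorem~1.6]{WinterSubhomogeneous}, and then applies $\dim_{\mathrm{nuc}}\leq\operatorname{dr}$ \cite[Remark~2.2(ii)]{WinterZacharias:AIM}.

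You instead induct along the chosen pullback decomposition, using the extension $0\to C_0(X_j\setminus Y_j,M_{r_j})\to R^{(j)}\to R^{(j-1)}\to 0$ and the extension estimate for nuclear dimension. Your identification of the kernel and your surjectivity argument for $\kappa$ are right. The ideal has nuclear dimension at most $\dim X_j$ by the hereditary and matrix permanence properties.

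Your route is more elementary and self-contained, since it uses only basic permanence properties of nuclear dimension. The cost is that the bound it yields, $\dim_{\mathrm{nuc}}(C)\leq\sum_{j=0}^{\ell}(\dim X_j+1)-1$, grows with the length of the decomposition. Winter's theorem gives $\operatorname{dr}(C)\leq\max_j\dim X_j$ independently of the length, and gives the stronger conclusion of finite decomposition rank. Definition~\ref{def:tlfnd} only requires finiteness, so either argument suffices.

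Your induction genuinely needs nuclear dimension rather than decomposition rank. Decomposition rank is not controlled under general extensions; the Toeplitz extension of $C(\mathbb{T})$ by the compacts has infinite decomposition rank.
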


\begin{proof}
It is enough, by scaling, to consider finite sets in the unit ball.
Given a finite set \(\mathcal F\subset A_1\) and \(\varepsilon>0\),
apply Definition~\ref{def:unital-local-rsh-subquadratic} with, for
instance, \(\theta=1/2\). We obtain a unital RSH subalgebra
\(C\subseteq A\), equipped with a finite RSH decomposition over
finite-dimensional compact spaces, such that every
\(a\in\mathcal F\) is within \(\varepsilon\) in
\(\|\cdot\|_{2,T(A)}\) of an element of \(C_1\).

By \cite[Theorem~1.6]{WinterSubhomogeneous}, \(C\) has finite
decomposition rank. Hence \(C\) has finite nuclear dimension by
\cite[Remark~2.2(ii)]{WinterZacharias:AIM}. Thus \(C\), together with
the approximants supplied by condition~\((1)\) of
Definition~\ref{def:unital-local-rsh-subquadratic}, verifies
Definition~\ref{def:tlfnd}.
\end{proof}

\begin{thm}[Local RSH subquadratic growth gives uniform property
\(\Gamma\)]
\label{thm:local-rsh-gamma}
Let \(A\) be separable and unital, with \(T(A)\neq\emptyset\), and
suppose that \(A\) has no nonzero finite-dimensional representations.
If \(A\) has unital locally tracially subquadratic RSH approximation,
then \(A\) has uniform property \(\Gamma\).
\end{thm}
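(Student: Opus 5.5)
The plan is to follow the template of Theorems~\ref{thm:local-gamma} and~\ref{thm:vaccaro-local-gamma}: combine the real rank zero result for the ultrapower with tracially locally finite nuclear dimension, and then invoke the bridge Theorem~\ref{thm:VW-bridge}. Note that Theorem~\ref{thm:VW-bridge} requires exactly separability, unitality, \(T(A)\neq\emptyset\), and the absence of nonzero finite-dimensional representations. These are precisely the standing hypotheses of the present statement, so no simplicity is needed.

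First, Theorem~\ref{thm:local-rsh-rr0} applies verbatim: \(A\) is unital with \(T(A)\neq\emptyset\) and has unital locally tracially subquadratic RSH approximation, so \(A^{\U}\) has real rank zero. Second, Lemma~\ref{lem:local-rsh-tlfnd} shows that \(A\) has tracially locally finite nuclear dimension in the sense of Definition~\ref{def:tlfnd}. Third, Theorem~\ref{thm:VW-bridge} yields that \(A\) is uniformly McDuff and hence has uniform property \(\Gamma\).

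There is essentially no obstacle at this level, since all the analytic content has been front-loaded into the earlier results. The relative rank avoidance (Proposition~\ref{prop:relative-compact-rank-avoidance}) and the induction of Theorem~\ref{thm:rsh-rank-avoidance} feed Corollary~\ref{cor:rsh-tracial-smallness} and hence property \((FG)\). The nonsimple version of Winter's centralization feeds Lemma~\ref{lem:winter-centralization}. The only point I would double-check is that the hypotheses line up. Theorem~\ref{thm:VW-bridge} needs \(A\) itself, rather than its local models, to lack finite-dimensional representations. The local RSH models certainly do have such representations, but they enter only through tracial approximation in Lemma~\ref{lem:local-rsh-tlfnd} and Theorem~\ref{thm:local-rsh-rr0}, neither of which uses that hypothesis. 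So the argument is a three-line concatenation, exactly parallel to the proof of Theorem~\ref{thm:local-gamma} with the simplicity step replaced by the assumed absence of finite-dimensional representations.
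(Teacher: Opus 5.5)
Your proposal is correct and is exactly the paper's argument. It obtains real rank zero of \(A^{\U}\) from Theorem~\ref{thm:local-rsh-rr0} and tracially locally finite nuclear dimension from Lemma~\ref{lem:local-rsh-tlfnd}, then concludes via Theorem~\ref{thm:VW-bridge}, whose hypotheses are precisely the standing assumptions on \(A\) (you also correctly note that the no-finite-dimensional-representations hypothesis is used only for \(A\), not for the local RSH models).
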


\begin{proof}
By Theorem~\ref{thm:local-rsh-rr0}, the uniform tracial ultrapower \(A^{\U}\) has real rank zero.  By Lemma~\ref{lem:local-rsh-tlfnd}, \(A\) has tracially locally finite nuclear dimension.  The Vaccaro--Winter bridge, Theorem~\ref{thm:VW-bridge}, then gives uniform property \(\Gamma\).
\end{proof}

\subsection{From inductive limits to the local hypothesis}

\begin{prop}
\label{prop:rsh-inductive-local}
Let \(A\cong\varinjlim(A_n,\phi_n)\) be a unital RSH inductive-limit decomposition with subquadratic RSH dimension growth in the sense of Definition~\ref{def:rsh-subquadratic-growth}.  If \(T(A)\neq\emptyset\), then \(A\) has unital locally tracially subquadratic RSH approximation in the sense of Definition~\ref{def:unital-local-rsh-subquadratic}.
\end{prop}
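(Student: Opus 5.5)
The plan is to take the approximating subalgebra to be the image of a single stage \(A_n\), chosen far enough out that it both approximates \(\mathcal F\) in norm and has small dimension ratio \(q_n\). This mirrors the proof of Proposition~\ref{prop:AH-local-subquadratic}. Unlike the AH case, no telescoping argument of the kind in Proposition~\ref{prop:AH-subquadratic-to-strict} should be needed. The reason is that the \(\theta\)-subquadratic block estimate of Definition~\ref{def:blockwise-rsh-subquadratic} does not require ranks to grow: \(\lceil\theta r_j\rceil\geq1\) always.

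\emph{Step 1: reduction to a numerical condition.} Fix a finite set \(\mathcal F\subset A_1\), \(\varepsilon>0\) and \(0<\theta<1\). Since \(\liminf_n q_n=0\), the set of indices \(n\) with \(q_n<\theta^2\) is infinite. For any such \(n\) and every block \(C(X_{n,j},M_{r_{n,j}})\) of the chosen decomposition of \(A_n\), we have
\[
\dim X_{n,j}<\theta^2r_{n,j}^2\leq\lceil\theta r_{n,j}\rceil^2 ,
\]
as in the remark following Definition~\ref{def:blockwise-rsh-subquadratic}. So the chosen decomposition of \(A_n\) satisfies the \(\theta\)-subquadratic block estimate.

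\emph{Step 2: choosing the subalgebra.} Let \(\phi_{n,\infty}\colon A_n\to A\) denote the canonical maps. Because the connecting maps are unital and injective, each \(\phi_{n,\infty}\) is a unital injective \(*\)-homomorphism. Hence \(C=\phi_{n,\infty}(A_n)\) is a unital subalgebra with \(1_C=1_A\), and it is isomorphic to \(A_n\). Transporting the chosen RSH decomposition of \(A_n\) along this isomorphism gives \(C\) a chosen unital RSH decomposition with the same blocks.

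The images \(\phi_{n,\infty}(A_n)\) increase with \(n\) and have norm-dense union. So there is \(n_0\) such that, for every \(n\geq n_0\), each \(a\in\mathcal F\) has some \(x_a\in\phi_{n,\infty}(A_n)\) with \(\|a-x_a\|<\varepsilon/2\). Take \(n\geq n_0\) with \(q_n<\theta^2\); this is possible because such \(n\) occur infinitely often.

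\emph{Step 3: normalizing to contractions.} As in the proof of Proposition~\ref{prop:AH-local-subquadratic}, replace \(x_a\) by
\[
y_a=x_a/\max\{1,\|x_a\|\}\in C_1 .
\]
Since \(\|a\|\leq1\), this gives \(\|a-y_a\|<\varepsilon\). Norm approximation dominates \(\|\cdot\|_{2,T(A)}\)-approximation, which is meaningful because \(T(A)\neq\emptyset\). Therefore conditions (1) and (2) of Definition~\ref{def:unital-local-rsh-subquadratic} both hold.

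The only point needing care is Step 2. The RSH decomposition is data attached to \(A_n\), not to its image, so we need the image to be isomorphic to \(A_n\). This is exactly where injectivity of the connecting maps, built into Definition~\ref{def:rsh-subquadratic-growth}, is used. Everything else is routine.
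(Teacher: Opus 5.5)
Your proposal is correct and follows essentially the same route as the paper. You take \(C\) to be the image of a single stage \(A_n\), with \(n\) beyond the norm-approximation index and \(q_n<\theta^2\), normalize the approximants to contractions, and check the block estimate via \(\dim X_{n,j}<\theta^2r_{n,j}^2\leq\lceil\theta r_{n,j}\rceil^2\); your remark that injectivity of the connecting maps lets the chosen RSH decomposition pass to the image is what the paper's ``identify each \(A_n\) with its image in \(A\)'' implicitly uses.
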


\begin{proof}
Identify each \(A_n\) with its image in \(A\).  Let \(\mathcal F\subset A_1\) be finite, let \(\varepsilon>0\), and let \(0<\theta<1\).  By density of \(\bigcup_n A_n\) in \(A\), there are \(N\in\N\) and, for each \(a\in\mathcal F\), an element \(x_a\in A_N\) such that
\(\|a-x_a\|<\varepsilon/3\).
Since \(\|a\|\leq1\), we have \(\|x_a\|<1+\varepsilon/3\).  Thus, after replacing \(x_a\) by
\[
        \frac{x_a}{\max\{1,\|x_a\|\}},
\]
we may assume that \(x_a\in(A_N)_1\) and still have
\(\|a-x_a\|<2\varepsilon/3<\varepsilon\).

Since \(\liminf_n q_n=0\), there is \(n\geq N\) such that
\(q_n<\theta^2\).
Regard each \(x_a\) as an element of \((A_n)_1\), and put \(C=A_n\subseteq A\).  Then every element of \(\mathcal F\) is within \(\varepsilon\) in norm, and hence in \(\|\cdot\|_{2,T(A)}\), of an element of \(C_1\).

Moreover, for every block
\(C(X_{n,j},M_{r_{n,j}})\) in the chosen decomposition of
\(C=A_n\), one has
\[
\begin{aligned}
        \dim X_{n,j}
        &\leq q_n r_{n,j}^2 \\
        &<\theta^2 r_{n,j}^2 \\
        &\leq \lceil\theta r_{n,j}\rceil^2.
\end{aligned}
\]
Thus \(C\) satisfies both conditions in
Definition~\ref{def:unital-local-rsh-subquadratic}.

\end{proof}

\begin{proof}[Proof of Theorem~\ref{ASHgamma}]

\(A\) is stably finite, unital, and nuclear and so \(T(A) \neq \emptyset\).
Proposition~\ref{prop:rsh-inductive-local} shows that \(A\) has unital
locally tracially subquadratic RSH approximation.  Since \(A\) has no
nonzero finite-dimensional representations,
Theorem~\ref{thm:local-rsh-gamma} gives uniform property \(\Gamma\).
\end{proof}

\bibliographystyle{alpha}
\bibliography{ref_copy}

@article{BrownPedersen:JFA,
author = {L.~G. Brown and G.~K. Pedersen},
title = {{$\mathrm C^*$}-algebras of real rank zero},
journal = {J. Funct. Anal.},
volume = {99},
number = {1},
pages = {131--149},
year = {1991}
}

@article{CETW:IMRN,
author = {J.~Castillejos and S.~Evington and A.~Tikuisis and S.~White},
title = {Uniform property {$\Gamma$}},
journal = {Int. Math. Res. Not. IMRN},
volume = {2022},
number = {13},
pages = {9864--9908},
year = {2022},
doi = {10.1093/imrn/rnaa282}
}

@article{CETWW:IM,
author = {J.~Castillejos and S.~Evington and A.~Tikuisis and S.~White and W.~Winter},
title = {Nuclear dimension of simple {$\mathrm C^*$}-algebras},
journal = {Invent. Math.},
volume = {224},
number = {1},
pages = {245--290},
year = {2021}
}

@misc{ElliottNiu25,
author = {G.~A. Elliott and Z.~Niu},
title = {On the small boundary property and {$\mathcal Z$}-absorption},
year = {2025},
eprint = {2504.03611},
archivePrefix = {arXiv},
primaryClass = {math.OA},
note = {arXiv:2504.03611}
}

@incollection{ElliottRordam:Abel,
author = {G.~A. Elliott and M.~R{\o}rdam},
title = {Perturbation of {H}ausdorff moment sequences, and an application to the theory of {$\mathrm C^*$}-algebras of real rank zero},
booktitle = {Operator {A}lgebras: {T}he {A}bel {S}ymposium 2004},
series = {Abel Symp.},
volume = {1},
pages = {97--115},
publisher = {Springer},
address = {Berlin},
year = {2006}
}

@article {JacobDimAvoidance,
    AUTHOR = {Jacob, Beno\^it},
     TITLE = {On perturbations of continuous maps},
   JOURNAL = {Canad. Math. Bull.},
  FJOURNAL = {Canadian Mathematical Bulletin. Bulletin Canadien de
              Math\'ematiques},
    VOLUME = {56},
      YEAR = {2013},
    NUMBER = {1},
     PAGES = {92--101},
      ISSN = {0008-4395,1496-4287},
   MRCLASS = {54F45},
  MRNUMBER = {3009408},
MRREVIEWER = {Ihor\ Z.\ Stasyuk},
       DOI = {10.4153/CMB-2011-158-8},
       URL = {https://doi.org/10.4153/CMB-2011-158-8},
}

@article{MurrayVonNeumann4,
author = {F.~J. Murray and J.~von Neumann},
title = {On rings of operators. {IV}},
journal = {Ann. of Math. (2)},
volume = {44},
pages = {716--808},
year = {1943}
}

@article {NgWinter,
    AUTHOR = {Ng, Ping Wong and Winter, Wilhelm},
     TITLE = {A note on subhomogeneous {$C^\ast$}-algebras},
   JOURNAL = {C. R. Math. Acad. Sci. Soc. R. Can.},
  FJOURNAL = {Comptes Rendus Math\'ematiques de l'Acad\'emie des Sciences.
              La Soci\'et\'e{} Royale du Canada. Mathematical Reports of the
              Academy of Science. The Royal Society of Canada},
    VOLUME = {28},
      YEAR = {2006},
    NUMBER = {3},
     PAGES = {91--96},
      ISSN = {0706-1994,2816-5810},
   MRCLASS = {46L05 (46L40)},
  MRNUMBER = {2310490},
}

@article {PhillipsRSH,
    AUTHOR = {Phillips, N. Christopher},
     TITLE = {Recursive subhomogeneous algebras},
   JOURNAL = {Trans. Amer. Math. Soc.},
  FJOURNAL = {Transactions of the American Mathematical Society},
    VOLUME = {359},
      YEAR = {2007},
    NUMBER = {10},
     PAGES = {4595--4623},
      ISSN = {0002-9947,1088-6850},
   MRCLASS = {46L80 (19A13 19B14 19K14 46L05 46L55)},
  MRNUMBER = {2320643},
MRREVIEWER = {Valentin\ Deaconu},
       DOI = {10.1090/S0002-9947-07-03850-0},
       URL = {https://doi.org/10.1090/S0002-9947-07-03850-0},
}

@article{Toms2005Independence,
  author  = {Toms, Andrew S.},
  title   = {On the independence of {$K$}-theory and stable rank for simple {$C^*$}-algebras},
  journal = {J. Reine Angew. Math.},
  volume  = {578},
  year    = {2005},
  pages   = {185--199},
  doi     = {10.1515/crll.2005.2005.578.185}
}

@article{Toms:Ann,
author = {A.~S. Toms},
title = {On the classification problem for nuclear {$\mathrm C^*$}-algebras},
journal = {Ann. of Math. (2)},
volume = {167},
number = {3},
pages = {1029--1044},
year = {2008}
}

@article{Toms:JFA,
author = {A.~S. Toms},
title = {Flat dimension growth for {$\mathrm C^*$}-algebras},
journal = {J. Funct. Anal.},
volume = {238},
number = {2},
pages = {678--708},
year = {2006}
}

@misc{TomsSchubCalc:preprint,
      title = {Schubert Calculus and uniform property {$\Gamma$}},
      author = {Andrew S. Toms},
      year = {2026},
      eprint = {2606.12188},
      archivePrefix = {arXiv},
      primaryClass = {math.OA},
      note = {arXiv:2606.12188}
}

@misc{Vaccaro:preprint,
author = {A.~Vaccaro},
title = {Stable rank one, tracial local homogeneity and uniform property {$\Gamma$}},
year = {2026},
eprint = {2604.24682},
archivePrefix = {arXiv},
primaryClass = {math.OA},
note = {arXiv:2604.24682}
}

@article{Villadsen:JAMS,
author = {J.~Villadsen},
title = {On the stable rank of simple {$\mathrm C^*$}-algebras},
journal = {J. Amer. Math. Soc.},
volume = {12},
number = {4},
pages = {1091--1102},
year = {1999}
}

@article{Villadsen:JFA,
author = {J.~Villadsen},
title = {Simple {$\mathrm C^*$}-algebras with perforation},
journal = {J. Funct. Anal.},
volume = {154},
number = {1},
pages = {110--116},
year = {1998}
}

@article{Winter:IM12,
author = {W.~Winter},
title = {Nuclear dimension and {$\mathcal Z$}-stability of pure {$\mathrm C^*$}-algebras},
journal = {Invent. Math.},
volume = {187},
number = {2},
pages = {259--342},
year = {2012}
}

@article{WinterZacharias:AIM,
author = {W.~Winter and J.~Zacharias},
title = {The nuclear dimension of {$\mathrm C^*$}-algebras},
journal = {Adv. Math.},
volume = {224},
number = {2},
pages = {461--498},
year = {2010},
doi = {10.1016/j.aim.2009.12.005}
}

@article{WinterSubhomogeneous,
author = {W.~Winter},
title = {Decomposition rank of subhomogeneous {$\mathrm C^*$}-algebras},
journal = {Proc. London Math. Soc. (3)},
volume = {89},
number = {2},
pages = {427--456},
year = {2004},
doi = {10.1112/S0024611504014716}
}
\end{document}